\documentclass[11pt,a4paper]{article}

\usepackage[english]{babel}
\usepackage{amsmath}
\usepackage{amsfonts,amsthm,amssymb}
\usepackage{blkarray}
\usepackage{latexsym,bm}
\usepackage{indentfirst}
\usepackage{graphicx}
\usepackage{color}
\usepackage{tcolorbox}
\usepackage{tikz}
\usepackage{caption}
\usepackage{multicol}
\usepackage{multirow}
\usepackage{appendix}
\usepackage{makecell}
\usepackage[letterpaper,top=2cm,bottom=2cm,left=3cm,right=3cm,marginparwidth=1.75cm]{geometry}
\usepackage[colorlinks=true,anchorcolor=blue,filecolor=blue,linkcolor=blue,urlcolor=blue,citecolor=red]{hyperref}

\newtheorem{theorem}{Theorem}[section]

\newtheorem{proposition}{Proposition}
\newtheorem{lemma}{Lemma}[section]
\newtheorem{false statement}{False statement}
\newtheorem{corollary}{Corollary}[section]

\theoremstyle{definition}

\newtheorem{claim}{Claim}

\numberwithin{case}{subsection}

\title{\textbf{Upper bounds of the second largest eigenvalue of graphs}
\thanks{
This work is supported by National Natural Science Foundation of China (Nos. 12171154, 12301438), Chenguang Program of Shanghai Education Development Foundation and Shanghai Municipal Education Commission (No. 23CGA37), and China Scholarship Council (No. 202506740024).}
}
\author{
	Zhiwen Wang,\,\, Zihao Geng,\,\,
	Ji-Ming Guo\thanks{Corresponding author.\\ 
	\null\hspace{6.3mm}Email addresses:  walkerwzw@163.com (Z. Wang); gzh635516986@163.com (Z. Geng); jimingguo@hotmail.com (J.-M. Guo).}\vspace{4mm}\\
	School of Mathematics, East China University of Science and Technology,\\ Shanghai, 200237, P. R. China.
}
\date{\null}
\begin{document}
\maketitle

\begin{abstract}

Let $\lambda_i(G)$ denote the $i$-th largest eigenvalue of adjacency matrix of a graph $G$.
Gerschgorin’s Theorem indicates $\lambda_1(G)$ belongs to the largest disk, i.e., $\lambda_1(G)\le\Delta_1(G)$, where $\Delta_i(G)$ is the $i$-th largest degree of $G$.
We show that $\lambda_2(G)$ lies in the second largest disk. That is, in detail,
$$\lambda_2(G)<\Delta_2(G)-\frac{1}{n^2}.$$

A classical theorem proved by Hong [\textit{Linear Algebra Appl.} 1988] states that $\lambda_1(G)\le\sqrt{2m-n+1}$ for a connected graph $G$ with $n$ vertices and $m$ edges,
where the equality holds if and only if $G$ is a star $S_n$ or a complete graph $K_n$.
We give a refinement of Hong's theorem by showing $$\lambda_1(G)<\sqrt{2m-n}$$ for any connected graph $G\not\in\left\{S_n,S^1_{n-1},K_n,K^1_{n-1}\right\}$.
Based on this improved upper bound of $\lambda_1(G)$, for a connected graph $G$ with $n$ vertices and $m$ edges, we are able to prove a sharp upper bound of $\lambda_2(G)$ that
$$\lambda_2(G)\le\sqrt{m-\frac{n}{2}-\frac{1}{2}},$$
except $G$ is obtained from two disjoint $S_\frac{n}{2}$ by adding an edge between a pendant vertex of each star.
Moreover, we provide a complete characterization to extremal graphs attaining the equality.
\end{abstract}

\section{Introduction}\label{section:1}

There is no question that a strong correlation exists between eigenvalues and the capacity to unveil the countless mysteries inherent in graphs.
For a graph $G$ of order $n$, the eigenvalues of $G$ are eigenvalues of its adjacency matrix, denoted in order by $\lambda_1(G)\ge \lambda_2(G)\ge\cdots\ge \lambda_n(G)$. 
The historic study on eigenvalues focus on the largest eigenvalue $\lambda_1$, which is also known as the spectral radius.
The story may begin with Gerschgorin's Theorem \cite{B65}, which indicates that all $n$ disks share a common center and the largest eigenvalue belongs to the largest disk. This means $$\lambda_1(G)\le \Delta_1(G),$$ where $\Delta_i(G)$ denotes the $i$-th largest degree of $G$.
Recently, using the crucial bound $\lambda_1(H)\le \Delta_1(H)$ for signed graph $H$, Huang \cite{H19} solved the outstanding Sensitivity Conjecture in theoretical computer science, proposed by Nisan and Szegedy \cite{NS94}.

As for the second largest eigenvalue, there are several known results related to graph properties.
For instance, a graph with large gap between the largest and second largest eigenvalues has good connectivity, expansion and randomness properties.
Mention a $\Delta$-regular graph has the largest eigenvalue $\Delta$.
Many scholars were interested in the gap between the degree $\Delta$ and the second largest eigenvalue $\lambda_2(G)$ of a $\Delta$-regular connected graph $G$.
A remarkable work by Alon and Boppana \cite{A86} showed that
\begin{align}\label{eq:byAB}
    \lambda_2(G)\ge 2\sqrt{\Delta-1}\left(1-O\Big(\frac{\log (\Delta-1)}{\log n}\Big)\right)
\end{align}
for every $\Delta$-regular graph.
They also conjectured that $\lambda_2(G)\le2\sqrt{\Delta-1}+o(1)$ when $n$ is sufficiency large.
Friedman, Kahn and Szemeredi \cite{FKS89} contributed to the conjecture, proving that $\lambda_2(G)\le 2\sqrt{\Delta-1}+\log\Delta+o(1)$.
A Ramanujan graph, introduced in \cite{LPS88}, is a connected $\Delta$-regular graph with $\max\{|\lambda_i(G)|:i\ge 2\}\le 2\sqrt{\Delta
-1}$.
In \cite{F04}, Numerical experiments indicate that random $\Delta$-regular graphs satisfy $\lambda_2<2\sqrt{\Delta-1}$.

Neglecting regularity, in terms of the maximum degree $\Delta_1$ and diameter $D$ of a graph $G$ in which there are two edges of distance at least $2k+2$, Nilli \cite{N91} in 1991 proved 
$$\lambda_2(G)\ge 2\sqrt{\Delta_1-1}-\frac{2\sqrt{\Delta_1-1}-1}{k+1},$$
improving Eq.\,(\ref{eq:byAB}) since $D\ge \frac{\log n}{\log(\Delta_1-1)}-O(1)$.

Solving a longstanding problem on equiangular lines by Jiang, Tidor, Yao, Zhang and Zhao \cite{JTYZZ21}, a key ingredient is showing an effective upper bound of multiplicity of the second largest eigenvalue. 
Then Balla \cite{B25} explored significant results on equiangular lines with assumption on the spectral gap $\Delta-\lambda_2(G)<\frac{n}{2}$ for $\Delta$-regular graph $G$.
The second largest eigenvalue is also used to classify graphs.
Originated from a problem of Hoffman (see \cite{H82}) that characterizing graphs with the second largest eigenvalue not greater than $1$, this topic received much attention.
One may refer to early paper \cite{AM85} and recent paper \cite{LCS24} and their references for related interesting works.
Although the second largest eigenvalue has strong practical applicability in many areas, its study remains highly challenging.

In this paper, we pay attention to upper bounds of the second largest eigenvalue of general graphs.
Motivated by aforementioned works, one of our results gives an upper bound for $\lambda_2(G)$ in terms of the second largest degree $\Delta_2(G)$.
We show that for any connected graph $G$ of order $n$,
$$\lambda_2(G)<\Delta_2(G)-\frac{1}{n^2}.$$
This infers $\lambda_2(G)$ lies in the second largest disk in Gerschgorin’s Theorem.
The method relies on the gap $\Delta_1(G)-\lambda_1(G)$ of 
irregular graph $G$ obtained by Cioab\u{a}, Gregory, Nikiforov \cite{CGN07}.

Back to the story of the largest eigenvalue of a graph, other than degrees, the order and the size are fundamental parameters of bounding the largest eigenvalue. 
In \cite{BH85}, Brualdi and Hoffman showed $\lambda_1(G)\le k-1$ for a graph with $\frac{k(k-1)}{2}$ edges.
Subsequently, Stanley \cite{S87} gave a generalization and showed $\lambda_1(G)\le\frac{-1+\sqrt{1+8m}}{2}$.
Hong \cite{H88} further obtained a classical theorem that for a connected graph $G$ of order $n$ and size $m$,
$$\lambda_1(G)\le\sqrt{2m-n+1},$$
with equality holds if and only if $G$ is a star or a complete graph.
A follow-up generalization to Hong's theorem may refer to \cite{GWL19, HSF01}.

Our second result gives a direct refinement on Hong's theorem.
Denote by $\mathcal{H}_n$ the family of graphs consisting of $S_n$, $S^1_{n-1}$, $K_n$ and $K^1_{n-1}$, where $S^1_{n-1}$ (or $K^1_{n-1}$, resp.) be the graph obtained by identifying a vertex of $P_2$ with a pendant vertex of a star $S_{n-1}$ (or a vertex of a complete graph $K_{n-1}$, resp.).
We prove, for a connected graph $G$ of order $n\ge 10$ and size $m$,
\begin{align}\label{eq:refined bound}
    \lambda_1(G)<\sqrt{2m-n},
\end{align}
unless $G\in\mathcal{H}_n$.
Moreover, from Cauchy interlacing theorem, it is not hard to find each graph $G$ in $\mathcal{H}_n$ satisties $\lambda_1(G)>\sqrt{2m-n}$.

In contrast to the largest eigenvalue, little literature seems to be know about bounds on other eigenvalues of graphs.
One may refer to \cite{BD84} for significant results, and \cite{AH10} for lots of open conjectures.
Our third result is a version of Hong's theorem for the second largest eigenvalue.
Based on the bound in Eq.\,(\ref{eq:refined bound}), and the relation of elements of eigenvector and graph properties, we can deduce a sharp upper bound of $\lambda_2(G)$ for a connected graph $G$, proving that
$$\lambda_2(G)\le\sqrt{m-\frac{n}{2}-\frac{1}{2}},$$
except $G$ is isomorphic to $S_{\frac{n}{2}}uvS_{\frac{n}{2}}$, a graph obtained from two copies of $S_{\frac{n}{2}}$ by connecting $uv$ such that $u$ (and $v$) is one pendant vertex of each $S_{\frac{n}{2}}$.
Furthermore, we completely characterize the extremal graphs attaining the equality.
They belongs to three families of graphs $\mathcal{J}(K_{\frac{n-1}{2}},K_{\frac{n-1}{2}})$, $\mathcal{J}(S_{\frac{n-1}{2}},S_{\frac{n-1}{2}})$ and $\mathcal{J}(K_{r+1},S_{r^2+1})$, where $r$ is an integer and $r^2+r+3=n$ (the definition is placed in Section \ref{section:4}).

\section{Degrees and eigenvalues}

Here we first introduce some terminologies.

Throughout this paper, a graph $G=(V(G),E(G))$ is considered to be simple and undirected graph with vertex set $V(G)$ and edge set $E(G)$.
The order $n(G)$ equals to $|V(G)|$, and the size $m(G)$ equals to $|E(G)|$.
If $U$ is subset of $V(G)$, then $G-U$ is the graph by removing all vertices in $U$ and its incident edges.
In particular, when $U$ has a single vertex $u$, we write $G-U$ as $G-u$ simply.

Suppose that $U$ and $W$ are two subsets of $V(G)$ with $U\cap W=\varnothing$.
Let $E(U,W)$ be the set of edges such that one end point belongs to $U$ and another one belongs to $W$, and $e(U,W)$ be the size of $E(U,W)$.
The graph $G-E(U,W)$ is the resulting graph after deleting all edges in $E(U,W)$; in particular, when $E(U,W)$ has a single edge $uw$, write $G-E(U,W)$ as $G-uw$.
Let $N(v)$ be the neighborhood of a vertex $v$, and $d(v)=|N(v)|$ be the degree of $v$.
If $H$ is a subgraph of $G$, then denote by $d_H(v)$ the number of neighbors of $v$ in $V(H)$.

As mentioned in Section \ref{section:1}, the $i$-th 
largest eigenvalue $\lambda_i(G)$ of a graph $G$ is the $i$-th 
largest eigenvalue of its adjacency matrix $A(G)$.
The largest eigenvalue $\lambda_1(G)$ of $G$ is also written as $\rho(G)$.
The degree sequence is denoted in order as $\Delta_1(G)\ge \Delta_2(G)\ge \cdots\ge \Delta_n(G)$.

The symbol $G$ is omitted when there is no ambiguity.

Gerschgorin's Theorem \cite{B65} shows that all eigenvalues $\lambda$ of a matrix $M=(M_{ij})_{n\times n}$ belong to the union of disks $|\lambda-M_{ii}|\le\sum_{1\le j\le n,j\not=i}{|M_{ij}|}$, $1\le i\le n$.
If $M=A(G)$, then we know $\lambda_1(G)\le \Delta_1(G)$.
That means the spectral radius of belongs to the largest disk.
This conclusion is well known and can be found in many monographs (see \cite{BH12} for example).

\begin{lemma}\label{lem:rho le max-degree}
    Let $G$ be a connected graph, then $$\rho(G)\le \Delta_1(G),$$ and the equality holds if and only if $G$ is a regular graph.
\end{lemma}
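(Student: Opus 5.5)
The plan is to use the Perron--Frobenius theorem together with a row-sum argument at a maximal entry of the Perron vector. Since $G$ is connected, $A(G)$ is irreducible and nonnegative. Hence $\rho=\rho(G)$ has an eigenvector $x$ with all entries strictly positive.

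Choose a vertex $u$ with $x_u=\max_{v}x_v$. The eigenvalue equation at $u$ gives
$$\rho\,x_u=\sum_{w\in N(u)}x_w\le d(u)\,x_u\le \Delta_1(G)\,x_u .$$
Dividing by $x_u>0$ yields $\rho(G)\le \Delta_1(G)$. Alternatively, this inequality follows directly from Gerschgorin's Theorem as recalled above, since every disk has center $0$ and radius $d(v)\le\Delta_1$.

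For the equality case, suppose first that $G$ is $k$-regular. Then $A\mathbf{1}=k\mathbf{1}$, so $k$ is an eigenvalue and $\rho\ge k=\Delta_1$. Combined with the inequality, this gives $\rho=\Delta_1$.

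Conversely, suppose $\rho=\Delta_1$. Then both inequalities in the display are equalities at $u$. So $d(u)=\Delta_1$ and $x_w=x_u$ for every $w\in N(u)$. Each such neighbour $w$ is again a vertex attaining the maximum entry, so the same argument applies at $w$. Propagating along paths, and using connectivity of $G$, every vertex attains the maximum entry and has degree $\Delta_1$. Hence $G$ is regular.

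The only delicate point is this propagation step. It needs the strict positivity of $x$, which comes from irreducibility, and it needs connectivity to reach every vertex. Otherwise the argument is routine.
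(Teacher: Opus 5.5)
Your proof is correct. It also supplies more than the paper does: the paper gives no argument for this lemma. It obtains the inequality from Gerschgorin's theorem (every disk is centred at $0$ with radius $d(v)\le\Delta_1$) and quotes the equality case as well known, citing Brouwer and Haemers.

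Your max-entry argument on the Perron vector is the standard self-contained route. What it buys is that the single chain $\rho x_u=\sum_{w\in N(u)}x_w\le d(u)x_u\le\Delta_1x_u$ gives both the bound and, by propagation along edges, the characterisation of equality.

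Gerschgorin's theorem alone says nothing about equality. To finish along the paper's route you would need Taussky's refinement for irreducible matrices: an eigenvalue on the boundary of the union of the disks lies on the boundary of every disk, which here forces $d(v)=\Delta_1$ for all $v$. The proof of that refinement is essentially your propagation step.

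A minor point: the step you call delicate uses only $x_u>0$ at a vertex of maximal entry. That holds for any nonzero nonnegative eigenvector, so strict positivity of every entry is convenient but not actually needed.
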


For an irregular graph $G$, Lemma \ref{lem:rho le max-degree} implies its spectral radius is less than the maximum degree.
In decades, the difference between $\Delta_1(G)$ and $\rho(G)$ is a topic of great interest.
A seminal work by Stevanovi\'c \cite{S04} proved that if $G$ is an irregular connected graph of order $n$ and maximum degree $\Delta_1$, then
$$\Delta_1(G)-\rho(G)>\frac{1}{2n(n\Delta_1-1)\Delta_1^2}.$$
Then Zhang \cite{Z05} improved the bound in terms of minimum degree $\Delta_n$ and diameter $D$, showing
$$\Delta_1(G)-\rho(G)>\frac{(\sqrt{\Delta_1}-\sqrt{\Delta_n})^2}{nD\Delta_1}.$$

Refining a bound $\Delta_1(G)+\lambda_n(G)>\frac{1}{n(D+1)}$ of Alon and Sudakov \cite{AS00}, 
a stronger inequality was established by Cioab\u{a}, Gregory and Nikiforov \cite{CGN07}.
Their contribution is the crucial ingredient to get the bound for $\lambda_2(G)$ in Theorem \ref{thm:2th<d2} below.

\begin{lemma}\cite{CGN07}\label{lem:irregular gap}
    If $G$ is a connected irregular graph of order $n$ and diameter $D$, then 
    $$\Delta_1(G)-\rho(G)>\frac{1}{n(D+1)}.$$
\end{lemma}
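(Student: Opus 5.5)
The plan is to write the gap $\Delta_1-\rho$ as a quadratic form in the Perron vector, and then bound that form from below along a shortest path. Let $\Delta=\Delta_1(G)$, and let $x$ be the unit positive Perron eigenvector of $A(G)$. Then $\rho=x^TAx=2\sum_{uv\in E(G)}x_ux_v$ and $\Delta=\Delta\sum_v x_v^2$. Writing $\Delta x_v^2=(\Delta-d(v))x_v^2+d(v)x_v^2$ and regrouping $\sum_v d(v)x_v^2=\sum_{uv\in E}(x_u^2+x_v^2)$ gives the identity
$$\Delta-\rho=\sum_{v\in V(G)}(\Delta-d(v))\,x_v^2+\sum_{uv\in E(G)}(x_u-x_v)^2 .$$
Both sums are nonnegative, so it suffices to find a few terms whose total exceeds $\frac{1}{n(D+1)}$.

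Choose $u$ with $x_u=\max_v x_v$. Since $\sum x_v^2=1$, we have $x_u^2\ge 1/n$. Because $G$ is irregular, some vertex $w$ has $d(w)\le\Delta-1$. Take a shortest path $u=v_0,v_1,\dots,v_k=w$ with $k\le D$. Keep only the term $(\Delta-d(w))x_w^2\ge x_w^2$ from the first sum and the $k$ path edges from the second. Cauchy--Schwarz gives $\sum_{i}(x_{v_i}-x_{v_{i+1}})^2\ge (x_u-x_w)^2/k$. Hence
$$\Delta-\rho\ \ge\ x_w^2+\frac{(x_u-x_w)^2}{k}\ \ge\ \min_{t\in\mathbb R}\Big(t^2+\frac{(x_u-t)^2}{k}\Big)=\frac{x_u^2}{k+1}\ \ge\ \frac{1}{n(D+1)}.$$
If $u=w$, we have $k=0$ and directly $\Delta-\rho\ge x_u^2\ge 1/n$.

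The only delicate point is strictness, which I expect to be the main (minor) obstacle. I would argue it as follows. If $x_u^2>1/n$, the last inequality is already strict. Otherwise $x_u^2=1/n$ forces $x$ to be constant, $x_v=1/\sqrt n$ for all $v$. In that case every edge term vanishes and $\Delta-\rho\ge(\Delta-d(w))/n\ge 1/n$. This exceeds $\frac{1}{n(D+1)}$ because $D\ge1$, which holds since an irregular connected graph has at least two vertices. Actually a constant Perron vector means $G$ is regular, so this case cannot occur at all, and strictness follows either way.
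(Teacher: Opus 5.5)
Your proof is correct, including the strictness step, since a nonconstant positive unit Perron vector has $\max_v x_v^2>1/n$. The paper does not prove this lemma; it quotes it from \cite{CGN07}. Your argument is essentially the Cioab\u{a}--Gregory--Nikiforov one: the identity $\Delta-\rho=\sum_v(\Delta-d(v))x_v^2+\sum_{uv\in E}(x_u-x_v)^2$, Cauchy--Schwarz along a shortest path from a maximal Perron entry, and minimization of the resulting quadratic.

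The one variation is this. As I recall the original, it ends the path at a vertex of minimum Perron entry and uses the whole degree deficit, $\sum_v(\Delta-d(v))x_v^2\ge(n\Delta-2m)\min_v x_v^2$. That gives their sharper bound $\frac{n\Delta-2m}{n(D(n\Delta-2m)+1)}$. Keeping only a single deficient vertex $w$, as you do, yields exactly the stated $\frac{1}{n(D+1)}$.
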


\begin{lemma}\cite{WG26+}\label{lem:null cut-set}
Let $\textbf{x}$ be an eigenvector of a graph $G$ corresponding to $\lambda_2(G)$.
Let $V_0$ be a vertex cut and $\textbf{x}\big|_{ {V_0}}=\emph{\textbf{0}}$. 
If $H$ is a component of $G-V_0$ and $\textbf{x}\big|_{ {V(H)}}\not=\emph{\textbf{0}}$,
then $\lambda_2(G)$ is an eigenvalue of $H$.
In particular, if $\textbf{x}\big|_{V(H)}>\emph{\textbf{0}}$ or $\textbf{x}\big|_{V(H)}<\emph{\textbf{0}}$, then $\lambda_2(G)$ is the largest eigenvalue of $H$.
\end{lemma}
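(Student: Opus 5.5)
The plan is to restrict the eigen-equation of $\textbf{x}$ to the vertices of $H$, and then apply a Perron--Frobenius argument for the sign condition. Write $\lambda=\lambda_2(G)$ and $\textbf{y}=\textbf{x}\big|_{V(H)}$.

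For the first claim, fix a vertex $v\in V(H)$. The eigen-equation $A(G)\textbf{x}=\lambda\textbf{x}$ at $v$ reads
$$\lambda x_v=\sum_{u\in N(v)}x_u .$$
Since $H$ is a component of $G-V_0$, every neighbor of $v$ lies either in $V(H)$ or in $V_0$. There are no edges from $H$ to other components of $G-V_0$. By hypothesis $x_u=0$ for $u\in V_0$, so the sum reduces to $\sum_{u\in N(v)\cap V(H)}x_u$. Hence $A(H)\textbf{y}=\lambda\textbf{y}$. Because $\textbf{y}\neq\textbf{0}$, it is a genuine eigenvector of $H$, and $\lambda_2(G)$ is an eigenvalue of $H$.

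For the second claim, we may replace $\textbf{x}$ by $-\textbf{x}$ and so assume $\textbf{y}>\textbf{0}$. Since $H$ is a connected component, $A(H)$ is a nonnegative irreducible matrix. By the Perron--Frobenius theorem it has a positive eigenvector $\textbf{z}$ for $\rho(H)$. Using the symmetry of $A(H)$,
$$\lambda\,\textbf{z}^{T}\textbf{y}=\textbf{z}^{T}A(H)\textbf{y}=(A(H)\textbf{z})^{T}\textbf{y}=\rho(H)\,\textbf{z}^{T}\textbf{y}.$$
Since $\textbf{z}$ and $\textbf{y}$ are both positive, $\textbf{z}^{T}\textbf{y}>0$, and therefore $\lambda=\rho(H)$. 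That is, $\lambda_2(G)$ is the largest eigenvalue of $H$.

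There is no real obstacle here. The one point to check is that the neighborhood of $v$ in $G$ is contained in $V(H)\cup V_0$, which is exactly what "component of $G-V_0$" gives. The irreducibility needed for the Perron vector comes from the connectedness of $H$.
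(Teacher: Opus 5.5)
Your proof is correct and complete. The restriction step is valid because $N(v)\subseteq V(H)\cup V_0$ for every $v\in V(H)$, and $H$ is an induced subgraph, so $N_H(v)=N(v)\cap V(H)$. Pairing $\textbf{y}$ with the positive Perron vector of the connected graph $H$ then forces $\lambda=\rho(H)$.

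The paper gives no proof of this lemma; it imports it from \cite{WG26+}, so there is nothing to compare against, and your argument is the standard one. Your argument also never uses that $\lambda$ is the \emph{second} eigenvalue, or that $V_0$ is actually a vertex cut. So the same conclusion holds for any eigenvector of $G$ that vanishes on any vertex set $V_0$.
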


\begin{lemma}\cite{WG26+}\label{lem:connected by negative edges}
Let $\textbf{x}$ be an eigenvector of a graph $G$ corresponding to $\lambda_2(G)$ and $H$ be an induced subgraph of $G$. 
If $\textbf{x}\big|_{V(H)}>\emph{\textbf{0}}$ and $\textbf{x}_w\le 0$ for any vertex $w$ adjacent to some vertex in $V(H)$, then $\lambda_2(G)\le \lambda_1(H)$.
The inequality is strict if $\textbf{x}_w< 0$ for some vertex $w$.
\end{lemma}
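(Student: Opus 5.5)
The plan is to compare the eigen-equation of $\textbf{x}$ on $V(H)$ with the adjacency matrix $A(H)$, and then apply the Perron--Frobenius theorem (via a Collatz--Wielandt-type argument) to the nonnegative symmetric matrix $A(H)$. Write $\lambda_2=\lambda_2(G)$ and let $\textbf{y}=\textbf{x}\big|_{V(H)}$, so $\textbf{y}>\textbf{0}$ by hypothesis.

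First I will derive a componentwise inequality. For every $v\in V(H)$ the eigen-equation gives
$$\lambda_2\textbf{x}_v=\sum_{u\in N(v)\cap V(H)}\textbf{x}_u+\sum_{w\in N(v)\setminus V(H)}\textbf{x}_w .$$
Since $H$ is induced, the first sum is exactly $(A(H)\textbf{y})_v$. By hypothesis every $w$ in the second sum satisfies $\textbf{x}_w\le 0$. Hence
$$A(H)\textbf{y}\ge\lambda_2\textbf{y}$$
componentwise. Equality fails (strict inequality holds) at any $v$ having a neighbour $w\notin V(H)$ with $\textbf{x}_w<0$.

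Next I will pass to the spectral radius. Let $\textbf{z}\ge\textbf{0}$, $\textbf{z}\neq\textbf{0}$, be a Perron eigenvector of $A(H)$ for $\lambda_1(H)$; this exists by Perron--Frobenius even if $H$ is disconnected. By symmetry,
$$\lambda_1(H)\,\textbf{z}^T\textbf{y}=\textbf{z}^TA(H)\textbf{y}\ge\lambda_2\,\textbf{z}^T\textbf{y}.$$
Since $\textbf{y}>\textbf{0}$ and $\textbf{z}$ is nonnegative and nonzero, we have $\textbf{z}^T\textbf{y}>0$. Dividing by it gives $\lambda_2(G)\le\lambda_1(H)$.

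The only delicate point is strictness, because $H$ need not be connected and the Perron vector $\textbf{z}$ may vanish at the vertex $v$ where the inequality is strict. I interpret the hypothesis as saying there is a vertex $w\notin V(H)$ with $\textbf{x}_w<0$ adjacent to some $v\in V(H)$. To handle this, I will restrict to the component $C$ of $H$ containing $v$. The same computation gives $A(C)\textbf{y}\big|_{V(C)}\ge\lambda_2\textbf{y}\big|_{V(C)}$, with strict inequality at $v$. Because $C$ is connected, its Perron vector $\textbf{z}_C$ is entrywise positive. Pairing with $\textbf{z}_C$ then gives $\lambda_1(C)\,\textbf{z}_C^T\textbf{y}>\lambda_2\,\textbf{z}_C^T\textbf{y}$, so $\lambda_2(G)<\lambda_1(C)\le\lambda_1(H)$. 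The last inequality holds since the spectrum of $H$ is the union of the spectra of its components.
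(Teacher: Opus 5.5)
Your proof is correct and complete. The paper gives no proof of this lemma; it is imported from \cite{WG26+}, so there is no in-paper argument to compare against. Your route is the standard Perron-vector pairing argument. You restrict the eigen-equation to $V(H)$ to get $A(H)\textbf{y}\ge\lambda_2(G)\textbf{y}$ entrywise, then pair with a nonnegative Perron vector of $A(H)$. For strictness you pass to the component $C$ of $H$ containing a vertex with a negative outside neighbour, so that the Perron vector is entrywise positive. This correctly handles a possibly disconnected $H$.

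Your reading of the hypotheses is the right one, and it is genuinely needed. The vertices $w$ must lie outside $V(H)$, and in the strictness clause $w$ must be adjacent to $V(H)$, not merely somewhere in $G$.

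For a concrete check, take a graph in $\mathcal{J}(K_k,K_k)$ with $\textbf{x}$ equal to $1$ on one clique, $-1$ on the other, and $0$ at the middle vertex. This is a $\lambda_2$-eigenvector with $\lambda_2(G)=k-1$. Taking $H$ to be the positive clique gives $\lambda_2(G)=\lambda_1(H)$, even though $\textbf{x}$ has negative entries elsewhere in $G$.

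This is also exactly how the paper uses the strict form. In Case 1 of the proof of Theorem \ref{thm:m.n-1}, and in the Appendix, the negative neighbour is the far endpoint of an edge of $E(N_+,N_-)$ leaving the component.
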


We can show that the second largest eigenvalue $\lambda_2(G)$ lies in the second largest disk, i.e., $\lambda_2(G)<\Delta_2$.

\begin{theorem}\label{thm:2th<d2}
    Let $G$ be a connected graph on $n$ vertices, then
    $$\lambda_2(G)<\Delta_2(G)-\frac{1}{n^2}.$$
    Roughly, $\lambda_2(G)<\Delta_2(G)$.
\end{theorem}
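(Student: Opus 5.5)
Let $u$ be a vertex of maximum degree $\Delta_1$. The plan is to remove $u$ and work with $G-u$. By Cauchy interlacing, $\lambda_2(G)\le\lambda_1(G-u)$. Every vertex of $G-u$ has degree at most $\Delta_2(G)$: a neighbour of $u$ loses one, and every non-$u$ vertex has degree at most $\Delta_2$ in $G$. So it suffices to show that each component $H$ of $G-u$ satisfies $\rho(H)<\Delta_2(G)-\frac{1}{n^2}$, because $\lambda_1(G-u)=\max_H\rho(H)$.

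Fix a component $H$ of $G-u$. Since $G$ is connected, $H$ contains a neighbour $w$ of $u$, and $d_H(w)=d_G(w)-1\le\Delta_2-1$.

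Case 1: $\Delta_1(H)\le\Delta_2-1$. Then Lemma \ref{lem:rho le max-degree} gives $\rho(H)\le\Delta_2-1<\Delta_2-\frac1{n^2}$.

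Case 2: $\Delta_1(H)=\Delta_2$. Then $H$ is irregular, since $d_H(w)\le\Delta_2-1$, and $H$ is connected with $n(H)\le n-1$ and diameter $D(H)\le n(H)-1\le n-2$. Lemma \ref{lem:irregular gap} yields
$$\Delta_2-\rho(H)=\Delta_1(H)-\rho(H)>\frac{1}{n(H)(D(H)+1)}\ge\frac{1}{(n-1)^2}>\frac{1}{n^2}.$$
In both cases $\rho(H)<\Delta_2-\frac1{n^2}$. Taking the maximum over components gives $\lambda_2(G)\le\lambda_1(G-u)<\Delta_2(G)-\frac{1}{n^2}$.

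Degenerate cases need a separate check. If $n=1$ then $\lambda_2$ is undefined, and for $n=2$ we have $G=K_2$ with $\lambda_2=-1<1-\frac14$. A single-vertex component $H$ has $\rho(H)=0$. Since $w\in H$ has $d_G(w)\ge1$ and $\Delta_2\ge d_G(w)$ whenever $n\ge2$, we get $\Delta_2\ge1$ and $0<1-\frac1{n^2}$, so such components are fine.

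The only real subtlety is keeping $\Delta_1(H)$ tied to $\Delta_2(G)$. This holds because $u$ achieves the maximum degree, so every other vertex is bounded by $\Delta_2$, and the drop at $w$ creates the irregularity that Lemma \ref{lem:irregular gap} needs. I expect the rest to be routine.
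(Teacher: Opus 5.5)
Your proof is correct, and it reaches the theorem by a genuinely different route from the paper's.

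\textbf{The paper's route.} The paper works with a $\lambda_2$-eigenvector $\mathbf{x}$ and proceeds in three steps.
\begin{enumerate}
\item It takes the components $G_1,\dots,G_s$ ($s\ge 2$) of the sign classes $G[N_+]\cup G[N_-]$.
\item It invokes Lemma \ref{lem:connected by negative edges} from \cite{WG26+} to get $\lambda_2(G)\le\min_i\rho(G_i)$.
\item In each $G_i$ it picks a vertex $u_i$: one with a neighbour outside $G_i$ if $G_i$ is regular, or one of maximum degree otherwise, where Lemma \ref{lem:irregular gap} applies. Only at the very end does it use $s\ge 2$, to ensure $\min_i d(u_i)\le\Delta_2(G)$.
\end{enumerate}

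\textbf{Your route.} You replace all of the eigenvector machinery with the interlacing inequality $\lambda_2(G)\le\lambda_1(G-u)$ for a vertex $u$ of maximum degree. This single deletion does two jobs at once. It caps every remaining degree at $\Delta_2(G)$. It also forces a degree drop at a neighbour of $u$ in every component. That is exactly the dichotomy you need: either $\Delta_1(H)\le\Delta_2-1$, or $H$ is irregular with $\Delta_1(H)=\Delta_2$. Lemmas \ref{lem:rho le max-degree} and \ref{lem:irregular gap} then handle the two cases.

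\textbf{What each approach buys.} Your argument is self-contained: it needs only standard interlacing and the Cioab\u{a}--Gregory--Nikiforov bound, with no dependence on the to-appear results of \cite{WG26+}. It also generalizes essentially verbatim. Deleting the set $U$ of the $k-1$ vertices of largest degree gives $\lambda_k(G)\le\lambda_1(G-U)$. The same two cases, with $n(H)\le n-1$, then yield $\lambda_k(G)<\Delta_k(G)-\frac{1}{n^2}$ for every $2\le k\le n$. In other words, the $k$-th eigenvalue lies in the $k$-th largest Gerschgorin disk, which the paper's sign-based argument does not give directly. The paper's approach, for its part, fits the $N_+,N_-,N_0$ framework it reuses in the proof of Theorem \ref{thm:m.n-1}. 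Its intermediate bound $\min_i\rho(G_i)$ can also be far sharper than $\lambda_1(G-u)$ on specific graphs; for example, in $K_n$ it gives $0$ versus $n-2$.
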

\begin{proof}
Let $\textit{\textbf{x}}$ be an eigenvector of the graph $G$ corresponding to $\lambda_2(G)$, and define
\begin{center}
    $N_+=\{v\in V(G):\textit{\textbf{x}}_v>0\}$,\ \ $N_-=\{v\in V(G):\textit{\textbf{x}}_v<0\}$\ \ and\ \ $N_0=V(G)\backslash(N_+\cup N_-)$.
\end{center}
Clearly, $N_+\not=\varnothing$ and $N_-\not=\varnothing$.
Let $G_1,\ldots, G_s$ be connected components of $G$ by removing all vertices in $N_0$ and edges in $E(N_+,N_-)$.
So $s\ge 2$.
From Lemma \ref{lem:connected by negative edges}, we get
$$\lambda_2(G)\le \min\big\{\rho(G_i):1\le i\le s\big\}.$$
Without loss of generality, let $G_1,\ldots,G_l$ be regular graphs, and $G_{l+1},\ldots,G_s$ be irregular graphs, $0\le l\le s$.
Write $n(G_i)$ as $n_i$, $1\le i\le s$.

From Lemmas \ref{lem:rho le max-degree} and \ref{lem:irregular gap}, we have, for $1\le i\le l$,
$$\rho(G_i)=\Delta_1(G_i)$$
and, for $l+1\le i\le s$,
$$\rho(G_i)<\Delta_1(G_i)-\frac{1}{n_i\big(D(G_i)+1\big)}.$$
For $1\le i\le l$, since $G$ is connected, we know $E\big(V(G_i),V(G)\backslash V(G_i)\big)\not=\varnothing$.
Let $u_i$ be a vertex of $G_i$ such that $N(u_i)\cap \big(V(G)\backslash V(G_i)\big)\not=\varnothing$ when $1\le i\le l$, and a vertex of $G_i$ such that $|N(u_i)\cap V(G_i)|=\Delta_1(G_i)$ when $l+1\le i\le s$.
Then $$\Delta_1(G_i)\le\begin{cases}
    & d(u_i)-1<d(u_i)-\frac{1}{n_i\big(D(G_i)+1\big)},\ \ 1\le i\le l;\\
    & d(u_i),\ \ l+1\le i\le s.
\end{cases}
$$
Thus, we can conclude
\begin{align*}
    \lambda_2(G)\le\min\big\{\rho(G_i):1\le i\le s\big\}&<\min\left\{d(u_i)-\frac{1}{n_i\big(D(G_i)+1\big)}:1\le i\le s\right\}\\
    &\le \min\left\{d(u_i)-\frac{1}{n^2_i}:1\le i\le s\right\}\\
    &\le \min\left\{d(u_i):1\le i\le s\right\}-\frac{1}{n^2}\\
    &\le \Delta_2(G)-\frac{1}{n^2},
\end{align*}
where the last inequality dues to $s\ge 2$.
This completes the proof.
\end{proof}

From Theorem \ref{thm:2th<d2}, the gap between $\Delta_2(G)$ and $\lambda_2(G)$ of an irregular connected graph $G$ is larger than $\frac{1}{n^2}$.
We believe the value $\frac{1}{n^2}$ is far from the optimal.
We remark our proof much relies on the bound in Lemma \ref{lem:irregular gap}. 
So a better gap of $\Delta_2(G)$ and $\lambda_2(G)$ may need a better bound for $\Delta_1(G)-\rho(G)$.

\section{Refinement on Hong's bound of the spectral radius}

The following result is a conjecture proposed by Guo, Wang and Li in \cite{GWL19} and has been proved recently by Sun and Das in \cite{Sun+Das}.

\begin{lemma}\label{SunDas}\cite{Sun+Das}
Let $G$ be a connected graph. For any vertex $v$ of $G$,
$$\rho(G)\le \sqrt{\rho^2(G-v)+2d(v)-1}.$$
The equality holds if and only if $G= S_n$ and $v$ is a pendent vertex, or $G= K_n$.
\end{lemma}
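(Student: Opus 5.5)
The plan is a Rayleigh-quotient argument with the Perron vector of $G$, comparing $G$ with $G-v$. Order the vertices so that $v$ comes first and write
$$A(G)=\begin{pmatrix}0&\mathbf b^{T}\\ \mathbf b&B\end{pmatrix},$$
where $B=A(G-v)$ and $\mathbf b$ is the indicator vector of $N(v)$, so $\mathbf b^T\mathbf b=d(v)=d$. Let $\mathbf x=(x_v,\mathbf y)$ be the unit Perron vector of $G$ and put $\rho=\rho(G)$, $\rho'=\rho(G-v)$. The eigen-equations give
$$\rho x_v=\mathbf b^T\mathbf y,\qquad B\mathbf y=\rho\mathbf y-x_v\mathbf b,\qquad \|\mathbf y\|^2=1-x_v^2 .$$

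I would combine two estimates.

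\emph{First estimate.} The Rayleigh quotient on $G-v$ gives $\mathbf y^TB\mathbf y\le\rho'\|\mathbf y\|^2$. Hence
$$\rho=2x_v\mathbf b^T\mathbf y+\mathbf y^TB\mathbf y\le 2\rho x_v^2+\rho'(1-x_v^2),$$
that is, $\rho'\ge\rho\,\frac{1-2x_v^2}{1-x_v^2}$.

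\emph{Second estimate.} Cauchy--Schwarz on $\rho x_v=\mathbf b^T\mathbf y$ gives $\rho x_v\le\sqrt d\,\|\mathbf y_{N(v)}\|$. In particular $x_v^2\le d/(\rho^2+d)$.

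Since $\frac{1-2t}{1-t}$ is decreasing in $t$, substituting the bound on $x_v^2$ yields
$$\rho'\ge\rho-\frac{d}{\rho},\qquad\text{so}\qquad \rho'^2\ge\rho^2-2d+\frac{d^2}{\rho^2}.$$
This is exactly the target $\rho^2\le\rho'^2+2d-1$ whenever $\rho\le d$. This covers $K_n$, where every step is tight: $\rho=n-1=d$ and $\rho'=n-2$.

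The main obstacle is the case $\rho>d(v)$, for instance when $v$ is a pendant vertex. There the crude Cauchy--Schwarz step wastes too much. I would first try to sharpen it using the second-order identity
$$\mathbf b^TB\mathbf y=(\rho^2-d)x_v,$$
which records how much of $\mathbf y$ sits on $N(v)$ and on $N(N(v))$. Concretely, I would decompose $\mathbf y=\alpha\mathbf z+\mathbf w$, where $\mathbf z$ is the Perron vector of $G-v$ and $\mathbf w\perp\mathbf z$. I would then bound $\mathbf y^TB^2\mathbf y\le\rho'^2\|\mathbf y\|^2$ and insert it into
$$\rho^2=\mathbf x^TA^2\mathbf x=d x_v^2+2x_v\,\mathbf b^TB\mathbf y+(\mathbf b^T\mathbf y)^2+\mathbf y^TB^2\mathbf y .$$
After simplifying with the identities above, this becomes a quadratic inequality in $x_v^2$, and the claim reduces to showing the excess term is at most $(2d-1)(1-x_v^2)$. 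For a pendant vertex, $d=1$ and the neighbour's entry controls everything, which gives the star $S_n$ as the extremal case.

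For the equality characterization I would trace back the tight steps, assuming the main inequality goes through:
\begin{itemize}
\item equality in Cauchy--Schwarz forces $\mathbf y$ to be constant on $N(v)$;
\item equality in the Rayleigh quotient forces $\mathbf y$ to be parallel to the Perron vector of $G-v$.
\end{itemize}
A short structural argument should then leave only $K_n$, or $S_n$ with $v$ pendant.
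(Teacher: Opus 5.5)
The paper does not prove this lemma; it quotes it from Sun and Das, so your argument has to stand on its own, and as written it does not. Your case $\rho\le d(v)$ is correct; squaring $\rho'\ge\rho-d/\rho$ is fine because $\rho^2\ge\Delta\ge d$. But that is the exceptional case. The generic case $\rho>d(v)$, which contains every vertex of degree below $\rho$, is only described as an intention, and the intended sharpening does not close it by itself. Indeed $\|B\mathbf y\|^2=\|\rho\mathbf y-x_v\mathbf b\|^2=\rho^2-(3\rho^2-d)x_v^2$. So inserting $\|B\mathbf y\|^2\le\rho'^2(1-x_v^2)$ reduces the claim to
\[
x_v^2\,(2\rho^2+d-1)\le 2d-1,
\]
which is linear in $x_v^2$, not quadratic. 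The only bound on $x_v$ you have, $x_v^2\le d/(\rho^2+d)$, implies this exactly when $\rho\le d$, since $\frac{d}{\rho^2+d}-\frac{2d-1}{2\rho^2+d-1}$ has the sign of $\rho^2-d^2$. So the second-order identity reproduces the same threshold as your first estimate. Your pendant remark can be completed: $d=1$ only needs $x_u^2\le\frac12$ for the neighbor $u$, which follows from Cauchy--Schwarz at $u$ and $\rho^2\ge d(u)$. For $2\le d<\rho$, however, the proposal contains no adequate upper bound on the Perron weight carried by $N(v)$, and the decomposition $\mathbf y=\alpha\mathbf z+\mathbf w$ does not produce one.

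The missing bound can be obtained as follows. Let $S=N(v)$, $T=V(G)\setminus S$, let $A_S,A_T$ be the corresponding principal submatrices, and let $s=\|\mathbf x_S\|^2$. Testing the eigen-equation against $\mathbf x_S$ and against $\mathbf x_T$ and subtracting gives $\rho(2s-1)=\mathbf x_S^TA_S\mathbf x_S-\mathbf x_T^TA_T\mathbf x_T\le(d-1)s$. Hence, for $\rho\ge d$,
\[
x_v^2=\frac{(\mathbf b^T\mathbf y)^2}{\rho^2}\le\frac{ds}{\rho^2}\le\frac{d}{\rho(2\rho-d+1)}\le\frac{2d-1}{2\rho^2+d-1},
\]
where the last step is equivalent to $(d-1)(2\rho+1)(\rho-d)\ge 0$.

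Your equality analysis must also be redone. The condition that $\mathbf y$ be parallel to the Perron vector of $G-v$ belongs to the first-order estimate, which is strict for $S_n$ with $v$ pendant: there $\mathbf y$ has center-to-leaf ratio $\sqrt{n-1}$, while the Perron vector of $S_{n-1}$ has ratio $\sqrt{n-2}$. In the $B^2$ route, equality in $\|B\mathbf y\|\le\rho'\|\mathbf y\|$ only places $\mathbf y$ in the sum of the $\pm\rho'$ eigenspaces of $B$. Equality therefore has to be read off from the bound on $x_v^2$. For $d\ge 2$ it forces $\rho=d$, and then (e.g.\ via $G[N(v)]=K_d$) $G=K_{d+1}$. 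For $d=1$ it forces $s=\frac12$ and $\mathbf x_T^TA_T\mathbf x_T=0$, so $V(G)\setminus N(v)$ is independent and $G$ is a star centered at the neighbor of $v$.
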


Given vertices $u$ and $v$, their distance $d_G(u, v)$ is the minimal length
of a path from $u$ to $v$. The eccentricity of a vertex $v$ in $G$ is the maximum distance from $v$ to all other vertices in $G$, denoted
by $e_G(v)$.

\begin{theorem}\label{inducedsubgraph}
Let $G$ be a connected graph with $n$ vertices and $m$ edges, and $G'$ be a proper induced subgraph of $G$ with $n'$ vertices and $m'$ edges. 
Then we have
$$\rho(G)\le \sqrt{2m-n-2m'+n'+\rho^2(G')},$$
and the equality holds if and only if  $G=S_n$ and $G'=S_{n'}$, or $G=K_n$ and $G'=K_{n'}$.
\end{theorem}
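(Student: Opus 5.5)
Throughout I write $\phi(H)=2m(H)-n(H)$. The plan is to iterate Lemma \ref{SunDas}, deleting vertices of $V(G)\setminus V(G')$ one at a time. The aim is to pass from $G$ down to $G'$ through a chain of connected induced subgraphs
$$G=G_0\supset G_1\supset\cdots\supset G_k=G',\qquad k=n-n',$$
where $G_{j}=G_{j-1}-v_j$ with $v_j\in V(G_{j-1})\setminus V(G')$, and where each $G_{j-1}$ is connected so that Lemma \ref{SunDas} applies to it. Along such a chain, Lemma \ref{SunDas} gives at each step
$$\rho^2(G_{j-1})\le \rho^2(G_j)+2d_{G_{j-1}}(v_j)-1 .$$
Deleting $v_j$ removes exactly $d_{G_{j-1}}(v_j)$ edges and one vertex, so
$$\phi(G_{j-1})-\phi(G_j)=2d_{G_{j-1}}(v_j)-1 .$$
Summing the telescoping inequalities from $j=1$ to $k$ yields $\rho^2(G)\le \rho^2(G')+\phi(G)-\phi(G')$, which is exactly the claimed bound.

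The main obstacle is that Lemma \ref{SunDas} needs the graph it is applied to to be connected. $G'$ itself may be disconnected, and the intermediate graphs could also become disconnected. A convenient observation is that Lemma \ref{SunDas} only requires $G_{j-1}$ to be connected, not $G_j$. So I would instead choose the deletion order so that every $G_{j-1}$, for $j\le k$, is connected. If that fails, I would prove the one-step inequality $\rho^2(H)\le\rho^2(H-v)+2d_H(v)-1$ for disconnected $H$ as well. Since $\rho(H)$ is the maximum of $\rho$ over the components of $H$, this reduces to the component containing $v$, which is connected, together with monotonicity of $\rho$ under taking subgraphs. The degenerate case in which that component is just $\{v\}$ is handled directly: then $d_H(v)=0$, deleting $v$ does not change $\rho$, and the step is fine because the increment $-1$ is offset by $\rho^2$ staying the same only if we are careful. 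This is exactly the point to check. The clean fix is to order the deletions so that $v$ is never isolated in $G_{j-1}$. This is possible by always deleting a vertex of $V(G_{j-1})\setminus V(G')$ that is farthest from $G'$ within the connected graph $G$; such a vertex has degree at least $1$ whenever the current graph is connected. Making this ordering argument rigorous, by choosing $v_j$ as a vertex whose removal keeps the remainder connected to $G'$'s components, is the step I expect to require the most care.

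For equality, every step must be an equality in Lemma \ref{SunDas}. In particular, the first step requires $G_0=G$ to be $K_n$, or $S_n$ with $v_1$ a pendant vertex. Conversely, if $G=K_n$ then every induced subgraph is complete, and the computation $\rho^2(K_n)=(n-1)^2=\phi(K_n)-\phi(K_{n'})+(n'-1)^2$ confirms equality. If $G=S_n$, then equality at all steps forces every deleted vertex to be a pendant vertex of a star, so $G'$ retains the centre and $G'=S_{n'}$. I would check directly that $S_{n'}$ gives equality, and rule out any other induced subgraph $G'$ of $S_n$ (for instance edgeless ones, obtained when the centre is deleted) because it would force a strict step.
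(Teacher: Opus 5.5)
Your proposal is correct and is essentially the paper's argument: you iterate Lemma~\ref{SunDas} along a farthest-first deletion order, telescope using $2m=2m'+2\sum_j d_{G_{j-1}}(v_j)$, and read off the equality case from the first step, where $G_0=G$ is connected (the paper measures distance from a fixed vertex of $G'$ in a spanning tree, whereas you measure distance to $V(G')$ in $G$). You are more careful than the paper in extending the one-step inequality to disconnected $G_{j-1}$ with $d_{G_{j-1}}(v_j)\ge 1$, and this extension is genuinely needed (e.g.\ for $G=P_5$ with $V(G')$ the two ends and the middle vertex, no order keeps every $G_{j-1}$ connected), but the reason $v_j$ is never isolated is that its neighbour on a shortest path to $V(G')$ has not yet been deleted, not that the current graph is connected.
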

\begin{proof}
Let $V_1=V(G)-V(G')$. Then $|V_1|=n-n'$. Since $G$ is connected, there exists some spanning tree,
say $T$ in $G$. Suppose that $v$ is a vertex in $V(G')$. Let
$$U_i=\{u|d_T(u, v)=e_T(v)-i+1, 1\le i\le e_T(v) \}.$$

Let $S_i=V_1\cap U_i\ (1\le i\le e_T(v))$. It is easy to see that $\sum_{i=1}^{e_T(v)}|S_i|=|V_1|$.
Label the vertices of $\bigcup_{i=1}^{e_T(v)}S_i$ as $v_1, \ldots, v_{|S_1|}, v_{|S_1|+1}, \ldots, v_{|S_1|+|S_2|}, \ldots, v_{n-n'}$
successively. Then $G'$ can be considered as the graph obtained from $G$ by deleting vertices $v_1, v_2, \ldots, v_{n-n'}$ successively.
It is easy to see that for any $j \ (2\le i+1\le j\le n-n')$, $v_{j}$ is not an isolated vertex of
$G-v_1-v_2-\cdots -v_i$. Apply Lemma \ref{SunDas} to $G$ for $n-n'$ times successively, we have
\begin{align*}
\rho(G)&\le \sqrt{\rho^2(G-v_1)+2d_{G}(v_1)-1}\\
&\hspace{2mm}\vdots\\
&\le \sqrt{\rho^2(G')+2\sum_{i=1}^{n-n'}d_{G-v_1-\cdots-v_{i-1}}(v_i)-n+n'}.
\end{align*}

Note that $2m=2m'+2\sum_{i=1}^{n-n'}d_{G-v_1-\cdots-v_{i-1}}(v_i)$. Then we have $$\rho(G)\le \sqrt{2m-n-2m'+n'+\rho^2(G')}.$$
If the equality holds, then we have $\rho(G)= \sqrt{\rho^2(G-v_1)+2d_{G}(v_1)-1}$. From Lemma \ref{SunDas}, we have
$G= S_n$ and $v_k$ is a pendent vertex, or $G= K_n$. Thus we have $G=S_n$ and $G'=K_{1, n'-1}$ or $G=K_n$ and $G'=K_{n'}$.
If $G=S_n$ and $G'=K_{1, n'-1}$ or $G=K_n$ and $G'=K_{n'}$, it is easy to see that the equality holds.
\end{proof}

Setting $G'=P_2$ in Theorem \ref{inducedsubgraph}, we can directly deduce the following result by Hong in \cite{H88}, so Theorem \ref{inducedsubgraph} is a generalization of Hong's bound in terms of any induced subgraph.

\begin{lemma}\cite{H88}\label{lem:Hong's upper bound-1}
If $G$ is a connected graph with $n$ vertices and $m$ edges, then
$$\rho(G) \le \sqrt{2m - n + 1},$$
and the equality holds if and only if $G$ is a star or a complete graph.
\end{lemma}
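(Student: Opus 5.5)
The plan is to derive the statement directly from Theorem \ref{inducedsubgraph} by taking $G'=P_2$, a single edge. This is a proper induced subgraph of $G$ whenever $n\ge 3$. For $n\le 2$ the graph is $K_1=S_1$ or $K_2=S_2$, and the bound is checked by hand: $\rho(K_1)=0=\sqrt{0-1+1}$ and $\rho(K_2)=1=\sqrt{2-2+1}$.

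So assume $n\ge 3$. Because $G$ is connected, it contains an edge $uv$, and $G'=G[\{u,v\}]\cong P_2$ satisfies $n'=2$, $m'=1$ and $\rho(G')=1$. Substituting into Theorem \ref{inducedsubgraph} gives
$$\rho(G)\le\sqrt{2m-n-2+2+1}=\sqrt{2m-n+1},$$
which is exactly the inequality claimed.

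For the equality case I will also use Theorem \ref{inducedsubgraph}. If equality holds, then either $G=S_n$ with $G'=S_2$, or $G=K_n$ with $G'=K_2$; either way $G$ is a star or a complete graph. For the converse I compute directly. We have $\rho(S_n)=\sqrt{n-1}$ and $m=n-1$, so $\sqrt{2m-n+1}=\sqrt{n-1}$. We have $\rho(K_n)=n-1$ and $m=n(n-1)/2$, so $2m-n+1=n^2-2n+1=(n-1)^2$. Equality therefore holds in both families.

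The only delicate point is making sure the equality characterization of Theorem \ref{inducedsubgraph} is applied correctly when $G'$ is a single edge. There the conditions $G'=S_{n'}$ and $G'=K_{n'}$ are automatically satisfied, since $P_2=S_2=K_2$. Apart from that, the argument is routine substitution plus the small-$n$ base cases.
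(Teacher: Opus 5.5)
Your proposal is correct and follows the paper's own route: the paper obtains this lemma by setting $G'=P_2$ in Theorem \ref{inducedsubgraph} and inherits the equality case from that theorem's characterization. Your separate treatment of $n\le 2$, where $P_2$ is not a proper induced subgraph, and your explicit check that stars and complete graphs attain equality fill in details the paper leaves implicit.
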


Furthermore, using Theorem \ref{inducedsubgraph} and meticulous analysis on graph structures, we can refine Hong's bound in Lemma \ref{lem:Hong's upper bound-1}.
Let $S_n^e$ be the graph obtained by embedding an edge into a star $S_{n}$.

\begin{lemma}\label{Hongnotexist}\cite{Hongsystem}
If $G$ is a connected graph with $n\ge 5$ vertices and is neither the complete graph $K_n$
nor the graph $K^1_{n-1}$, then there exists some vertex, say $u\in V(G)$, such that $G-u$ is connected
and is  neither the complete graph $K_{n-1}$ nor the graph $K^1_{n-2}$.
\end{lemma}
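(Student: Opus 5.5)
We have to prove Lemma \ref{Hongnotexist}: for a connected graph $G$ on $n\ge 5$ vertices with $G\notin\{K_n,K^1_{n-1}\}$, there is a vertex $u$ such that $G-u$ is connected and $G-u\notin\{K_{n-1},K^1_{n-2}\}$.

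The basic tool is that every connected graph has at least two non-cut vertices, for instance the leaves of any spanning tree. So let $T$ be a spanning tree of $G$ and let $L$ be the set of its leaves. Every $u\in L$ leaves $G-u$ connected. Hence it suffices to find some non-cut vertex $u$ with $G-u\not\cong K_{n-1}$ and $G-u\not\cong K^1_{n-2}$. I argue by contradiction: suppose that for every non-cut vertex $u$ of $G$, the graph $G-u$ is $K_{n-1}$ or $K^1_{n-2}$.

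\textbf{Step 1: some non-cut $u$ has $G-u\cong K_{n-1}$.} Then $G$ is $K_{n-1}$ plus a vertex $u$ joined to a nonempty set $A$ of $k$ vertices. The cases $k=1$ and $k=n-1$ give $K^1_{n-1}$ and $K_n$, which are excluded, so $2\le k\le n-2$. Pick $w\in A$. It is non-cut, since $u$ has another neighbour. In $G-w$, the clique on the remaining $n-2$ vertices has $n-2\ge 3$ vertices, and $u$ has degree $k-1$ with $1\le k-1\le n-3$. This graph is not complete because $k-1<n-2$. It is $K^1_{n-2}$ only if $k-1=1$, i.e.\ $k=2$. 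In that case choose instead a vertex $w'\notin A\cup\{u\}$, which exists since $n-1-2\ge 2$. It is non-cut, and $G-w'$ has $u$ of degree $2$ into a clique of size $n-2$. Because $n-2\ge 3$, $u$ is neither full nor a pendant, so $G-w'$ is neither $K_{n-1}$ nor $K^1_{n-2}$. This contradicts the assumption.

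\textbf{Step 2: every non-cut $u$ has $G-u\cong K^1_{n-2}$.} Write $G-u$ as a clique $Q$ of size $n-2$ plus a pendant $p$ attached to $q\in Q$. Since $Q$ has at least $3$ vertices, its structure is determined, and $u$ is adjacent to some set $B$ of vertices.

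\emph{Case $u\sim p$.} Then $p$ is non-cut, because it lies on no bridge ($Q\cup\{u\}$ is already connected). Also $d_{G-p}(u)=|B|-1$. If $|B|=1$, then $G-p\cong K^1_{n-2}$ with $u$ pendant, so $u$ is a leaf whose removal we must re-examine. In that situation $G$ is the path-tail graph: the clique $Q$ with the path $q\,p\,u$ attached. A vertex of $Q\setminus\{q\}$ is non-cut, and deleting it leaves a clique of size $n-3\ge 2$ with a pendant path of length $2$, which is neither form. Contradiction. If $|B|\ge 2$, then $u$ also has a neighbour in $Q$. Removing a vertex $w\in Q\setminus\{q\}$ that avoids the special adjacencies when possible (using $|Q|\ge 3$) leaves a graph containing the $p$--$u$ edge together with a clique plus two extra vertices. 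That graph has $p$ or $u$ of intermediate degree, or contains a cycle through $p$, so it is neither form.

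\emph{Case $u\not\sim p$.} Then $B\subseteq Q$. The vertex $p$ is a leaf of $G$, hence non-cut, and $G-p$ is $Q$ plus $u$ attached to $B$. This is exactly the situation of Step 1 with a clique of size $n-2\ge 3$: it is $K_{n-1}$ only if $B=Q$, and $K^1_{n-2}$ only if $|B|=1$.
\begin{itemize}
\item If $G-p\cong K_{n-1}$, we are back in Step 1.
\item If $|B|=1$ with $B=\{q\}$, then $G$ is a clique with two pendants at $q$. Removing a vertex of $Q\setminus\{q\}$ keeps both pendants, so the result is neither form.
\item If $|B|=1$ with $B=\{q'\}$, $q'\ne q$, then $G$ is a clique with pendants at two distinct vertices. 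Removing a third clique vertex (available since $n-2\ge 3$) keeps both pendants, so again neither form.
\item If $2\le|B|\le n-3$, then $G-p$ is already neither form.
\end{itemize}

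The main obstacle is the case bookkeeping in Step 2, especially small sizes such as $n=5$, where $|Q|=3$. I would double-check these small cases by hand to make sure the vertex chosen for deletion always exists and is genuinely non-cut.
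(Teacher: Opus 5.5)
There is a genuine gap, and it is exactly the case you flagged: Case $u\sim p$ with $|B|\ge 2$. Your justification ``contains a cycle through $p$, so it is neither form'' is not a valid criterion, because $K^1_{n-2}$ has cycles through every one of its non-pendant vertices. At $n=5$ the step really does fail. Take $B=\{p,q\}$. Then $G$ is the bowtie: the two triangles $Q$ and $qpu$ sharing the vertex $q$. Deleting either vertex of $Q\setminus\{q\}$ leaves the triangle $qpu$ with a pendant edge at $q$, which is $K^1_3$. Deleting $p$ or $u$ also leaves $K^1_3$, and $q$ is a cut vertex. The bowtie is neither $K_5$ nor $K^1_4$, so the lemma as stated is false for $n=5$, and no argument can close this case. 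The paper gives no proof here, only the citation to Hong. It uses the lemma only inside the proof of Lemma~\ref{Guonotexist}, where $n\ge 7$, so nothing downstream is affected.

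For $n\ge 6$ your argument is easily repaired, and this case becomes the simplest one. Whenever $u\sim p$, every $w\in Q\setminus\{q\}$ is a non-cut vertex: the clique $Q\setminus\{w\}$ still contains $q$, $p$ hangs on $q$, and $u$ hangs on $p$. In $G-w$ the vertex $p$ has degree exactly $2$. By contrast, all degrees of $K_{n-1}$ equal $n-2$, and the degrees of $K^1_{n-2}$ lie in $\{1,n-3,n-2\}$; for $n\ge 6$ the value $2$ is none of these. This settles all of Case $u\sim p$ at once, including $B=\{p\}$. In that subcase your assertions that $p$ is non-cut and that $G-p\cong K^1_{n-2}$ are wrong, since $u$ is isolated in $G-p$, but you do not actually use them. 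Step 1 and the case $u\not\sim p$ are correct for all $n\ge 5$; comparing edge counts settles each isomorphism question there. With this patch you have a complete proof for $n\ge 6$, and checking the subcases of Case $u\sim p$ directly at $n=5$ shows the bowtie is the only exception.
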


\begin{lemma}\label{Guonotexist}
If $G$ is a connected graph with $n\ge 7$ vertices and $G\neq S_n, S^1_{n-1}, K_n$, $K^1_{n-1}$ and $S_n^e$,
then there exists some vertex, say $u\in V(G)$, such that $G-u$ is connected
and $G-u \neq S_{n-1}, S^1_{n-2}, K_{n-1}$, $K^1_{n-2}$ or $S_{n-1}^e$.
\end{lemma}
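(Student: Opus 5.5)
We need to prove Lemma \ref{Guonotexist}: for a connected $G$ on $n\ge 7$ vertices not in $\{S_n,S^1_{n-1},K_n,K^1_{n-1},S^e_n\}$, some vertex $u$ has $G-u$ connected and $G-u$ not in the forbidden family of order $n-1$. The plan is to start from Lemma \ref{Hongnotexist}, which already handles the complete-type graphs, and then rule out the star-type outcomes.

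\textbf{Step 1: list the candidate deletions.} Let $C$ be the set of non-cut vertices of $G$. Since $G$ is connected, $|C|\ge 2$; for example, the two ends of a longest path in a spanning tree lie in $C$. Suppose, for contradiction, that $G-u$ is one of $S_{n-1},S^1_{n-2},K_{n-1},K^1_{n-2},S^e_{n-1}$ for every $u\in C$. Lemma \ref{Hongnotexist} applies because $G\ne K_n,K^1_{n-1}$ and $n\ge5$. It gives some $u_0\in C$ with $G-u_0\notin\{K_{n-1},K^1_{n-2}\}$. Hence $G-u_0\in\{S_{n-1},S^1_{n-2},S^e_{n-1}\}$.

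\textbf{Step 2: reconstruct $G$ from $H=G-u_0$.} In each case $H$ has at most $n$ edges and a centre $c$ of degree at least $n-3$. Write $G$ as $H$ plus the vertex $u_0$ joined to a nonempty set $N(u_0)\subseteq V(H)$. For each of the three types of $H$, split by how $N(u_0)$ meets three kinds of vertices: the centre, the pendant leaves, and the special vertices (the end of the subdivided edge, or the ends of the extra edge).

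In most configurations we exhibit a leaf $w$ of $H$, other than $u_0$, that is still a non-cut vertex of $G$. Then $G-w$ either contains a triangle together with too many edges, or has two vertices of degree at least $3$ at distance at least $2$. Either feature rules out all five forbidden graphs; they are easy to tell apart by edge count and degree sequence once $n-1\ge6$. This contradicts the assumption on $C$.

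The configurations where no such leaf exists are exactly those in which $G$ itself is one of the excluded graphs. These are:
\begin{itemize}
\item $u_0$ adjacent only to $c$, giving $G=S_n$ or $S^e_n$, or $G=S^1_{n-1}$ when $H=S^1_{n-2}$;
\item $u_0$ adjacent only to a leaf of $S_{n-1}$, giving $G=S^1_{n-1}$;
\item $u_0$ adjacent to $c$ and to one leaf of $S_{n-1}$, giving $G=S^e_n$.
\end{itemize}
The hypothesis excludes all of these.

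\textbf{Main obstacle.} The hardest part is the case analysis for dense $N(u_0)$, for instance when $u_0$ is adjacent to many leaves. There the centre is no longer special and $G$ could in principle look star-like around $u_0$ instead of $c$. To handle this, I would choose $u_0$ to maximise $\deg$, or apply Lemma \ref{Hongnotexist} symmetrically to swap the roles of $u_0$ and $c$. I would also use a simple edge count: every forbidden graph of order $n-1$ has either at most $n-1$ edges or at least $\binom{n-2}{2}$ edges. So if $m(G)-d(w)$ lies strictly between these values for some non-cut vertex $w$, we are done at once. This count should settle the intermediate cases with $3\le|N(u_0)|\le n-4$, leaving only the near-star configurations of Step 2. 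The hypothesis $n\ge7$ is used exactly to keep the two edge-count ranges disjoint and to separate the degree patterns.
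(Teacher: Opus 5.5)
Your argument is essentially the paper's, run in a different order. The paper first treats the cases where deleting some vertex leaves $S_{n-1}$, $S^1_{n-2}$ or $S^e_{n-1}$, choosing a pendant or degree-$2$ vertex $u$ by hand, and only then calls Lemma~\ref{Hongnotexist}. You call Lemma~\ref{Hongnotexist} first, inside a contradiction argument, to land in the star-type case. Your list of exceptional configurations is exactly the paper's.

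Two loose ends need repair.

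\emph{The dichotomy is not exhaustive.} You claim that $G-w$ either has ``a triangle with too many edges'' or ``two vertices of degree at least $3$ at distance at least $2$''. This fails in some sparse configurations:
\begin{itemize}
\item If $u_0$ is joined to two leaves of $S_{n-1}$, the only usable $w$ is a pendant of $c$. Deleting either of those two leaves gives $S^1_{n-2}$, and deleting $u_0$ gives $S_{n-1}$. Then $G-w$ is a $4$-cycle with pendants at $c$.
\item If $u_0$ is joined only to the degree-$2$ vertex of $S^1_{n-2}$, then $G-w$ is a tree whose two branch vertices are adjacent.
\end{itemize}
Neither feature holds in these cases. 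Instead, compare degree sequences, or use triangle-freeness, against $S_{n-1}$, $S^1_{n-2}$ and $S^e_{n-1}$.

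\emph{The dense case is already covered by your own edge count.} The case $|N(u_0)|\ge n-3$ needs no re-choice of $u_0$ and no second application of Lemma~\ref{Hongnotexist}. Take a leaf $w$ of $H$ with $w\in N(u_0)$. Such a leaf exists, and it is a non-cut vertex of degree $2$ because $u_0$ keeps another neighbour. Then $m(G-w)=m(H)+|N(u_0)|-2\in[2n-7,\,2n-4]$. For $n\ge 7$ this is at least $n$ and at most $\binom{n-2}{2}$. This is the paper's ``delete a vertex of degree $2$'' move. Compare against the exact value $\binom{n-2}{2}+1=m(K^1_{n-2})$ rather than your threshold $\binom{n-2}{2}$. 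For $n=7$, $H=S^e_6$ and $N(u_0)=V(H)$, the count is exactly $10=\binom{5}{2}$, so the weaker threshold would be inconclusive there.
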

\begin{proof}
Let $G$ be a connected graph which satisfies the hypothesis of this lemma. We consider the following three cases, and look for the desired vertex $u$.

\vskip0.1in
\noindent\textbf{Case 1}. There exists a vertex $v$ such that $G-v = S_{n-1}$.
\vskip0.1in   
Suppose that $v$ is adjacent to the center of $S_{n-1}$. 
Then $v$ is adjacent to at least two pendant vertices of $S_{n-1}$; otherwise $G=S_n$ or $S_{n}^e$, a contradiction.
If $v$ is adjacent to exactly two pendant vertices of $S_{n-1}$, then $G$ has $n-4\ (\ge 1)$ pendant vertices.
Let $u$ be a pendant vertex of $G$. It is easy to see that $G-u$ is connected and
$G-u \neq S_{n-1}, S^1_{n-2}, K_{n-1}$, $K^1_{n-2}$ and $S_{n-1}^e$.
If $v$ is adjacent to at least three pendant vertices of $S_{n-1}$, then let $u$ be a vertex of $G$ with degree 2.
Then $G-u$ is connected and $G-u \neq S_{n-1}, S^1_{n-2}, K_{n-1}$, $K^1_{n-2}$ and $S_{n-1}^e$.

Suppose that $v$ is not adjacent to the center of $S_{n-1}$. 
Then $v$ is adjacent to at least two pendant vertices of $S_{n-1}$; otherwise $G=S^1_{n-1}$, a contradiction.
If $v$ is adjacent to exactly two pendant vertices of $S_{n-1}$, then there exists another pendant vertex $u$ of $G$
such that $G-u$ is connected and $G-u \neq S_{n-1}, S^1_{n-2}, K_{n-1}$, $K^1_{n-2}$ and $S_{n-1}^e$.
If $v$ is adjacent to at least three pendant vertices of $S_{n-1}$, then let $u$ be a vertex of $G$ with degree
2. It is easy to see that $G-u$ is connected and $G-u \neq S_{n-1}, S^1_{n-2}, K_{n-1}$, $K^1_{n-2}$ and $S_{n-1}^e$.

\vskip0.1in
\noindent\textbf{Case 2}. There exists a vertex $v$ such that  $G-v = S^1_{n-2}$.
\vskip0.1in 
Let $w_1$ be the vertex of $S^1_{n-2}$ with degree $n-3$,
 $w_2$ be the vertex of $S^1_{n-2}$ with degree 2, and $w_3$ be the pendant vertex which is adjacent to $w_2$.
 Let $w_4, w_5, \ldots, w_{n-1}$ be the pendant vertices of $S^1_{n-2}$ which are adjacent to $w_1$, respectively.
If $v$ is adjacent to $w_2$ or $w_3$ or some pendant vertex, say $w_4$, of $w_4, w_5, \ldots, w_{n-1}$,
then let $u=w_{n-1}$. It is easy to see that $G-u$ is connected and
$G-u \neq S_{n-1}, S^1_{n-2}, K_{n-1}$, $K^1_{n-2}$ and $S_{n-1}^e$. If
$v$ is only adjacent to $w_1$, then $G=S^1_{n-1}$, a contradiction.

\vskip0.1in
\noindent\textbf{Case 3}. There exists a vertex $v$ such that $G-v = S_{n-1}^e$.
\vskip0.1in 
Let $w_1$ be the vertex of $S_{n-1}^e$ with degree $n-2$ and $w_2$, $w_3$ be the two vertices of $S_{n-1}^e$ with degree 2, respectively. 
Let $u_1, u_2, \ldots, u_{n-4}$ be the pendant vertices of $S_{n-1}^e$, respectively. If $v$ is adjacent to $w_2$ or $w_3$, then let $u=u_1$.
It is easy to see that $G-u$ is connected and $G-u \neq S_{n-1},  S^1_{n-2}, K_{n-1}$, $K^1_{n-2}$ and $S_{n-1}^e$.
In the following, suppose that  $v$ is not adjacent to $w_2$ or $w_3$.
Then $v$ is adjacent to at least one vertex, say $u_1$, among $u_1, u_2, \ldots, u_{n-4}$; otherwise, $G = S_{n}^e$, a contradiction. Let $u=u_2$,
It is easy to see that $G-u$ is connected and $G-u \neq S_{n-1},  S^1_{n-2}, K_{n-1}$, $K^1_{n-2}$ and $S_{n-1}^e$.

From Cases 1, 2 and 3, we can suppose that for any vertex $v$ of $G$, $G-v \neq S_{n-1}, S^1_{n-2}, $ and $S_{n-1}^e$. From Lemma \ref{Hongnotexist},
there exists some vertex, say $u\in V(G)$, such that $G-u$ is connected and is  neither the complete graph $K_{n-1}$ nor
the graph $K^1_{n-2}$ since $G\neq K_n$ and $K^1_{n-1}$. The result follows.
\end{proof}

Let $G_{s, t}\ (6\le s\le 21, 1\le t\le 853)$
 be the graph corresponding to the graph with labeling ``$s-t$'' in the tables: Spectra of graphs with seven vertices (see \cite{Cvetkovicrecentresults}), where $s$ is the number of edges and $t$ infers $G_{s,t}$ is the $t$-th graph in the tables. 
For example, $G_{6, 10}=S^1_6$, $G_{6, 11}=S_7$, $G_{7, 44}=S_7^e$, $G_{16, 834}=K^1_{6}$, $G_{21, 853}=K_7$.
For simplification, we only present the graphs $G_{s,t}$ emerged in Lemma \ref{Sevenverticesgraph} and Theorem \ref{thm:refined upper bound-1} in Figure 1.

\vskip0.2in

\begin{figure}[h]
\centering
\begin{tikzpicture}[x=0.80mm, y=0.80mm, inner xsep=0pt, inner ysep=0pt, outer xsep=0pt, outer ysep=0pt]
\definecolor{L}{rgb}{0,0,0}
\definecolor{F}{rgb}{0,0,0}
\path[line width=0.20mm, draw=L, fill=F] (25.08,-250.07) circle (0.50mm);
\path[line width=0.20mm, draw=L] (25.08,-250.07) -- (34.87,-244.99);
\path[line width=0.20mm, draw=L] (34.87,-244.99) -- (44.94,-249.70);
\path[line width=0.20mm, draw=L, fill=F] (44.94,-249.70) circle (0.50mm);
\path[line width=0.20mm, draw=L, fill=F] (35.05,-244.99) circle (0.50mm);
\path[line width=0.20mm, draw=L] (35.05,-245.08) -- (35.01,-255.04);
\path[line width=0.20mm, draw=L] (35.05,-244.81) -- (30.06,-235.10);
\path[line width=0.20mm, draw=L, fill=F] (30.06,-235.10) circle (0.50mm);
\path[line width=0.20mm, draw=L] (30.16,-235.10) -- (40.21,-235.09);
\path[line width=0.20mm, draw=L, fill=F] (40.14,-234.92) circle (0.50mm);
\path[line width=0.20mm, draw=L] (40.14,-234.92) -- (34.87,-244.62);
\path[line width=0.20mm, draw=L] (40.14,-234.83) -- (45.03,-230.11);
\path[line width=0.20mm, draw=L, fill=F] (45.03,-230.11) circle (0.50mm);
\draw(-4.92,-263.75) node[anchor=base west]{\fontsize{12}{12}\selectfont $G_{6,9}$};
\draw(30.01,-263.84) node[anchor=base west]{\fontsize{12}{12}\selectfont $G_{7,43}$};
\draw(65.02,-263.84) node[anchor=base west]{\fontsize{12}{12}\selectfont $G_{8,111}$};
\path[line width=0.20mm, draw=L, fill=F] (60.09,-249.89) circle (0.50mm);
\path[line width=0.20mm, draw=L] (60.09,-249.89) -- (69.89,-244.81);
\path[line width=0.20mm, draw=L] (69.89,-244.81) -- (79.96,-249.52);
\path[line width=0.20mm, draw=L, fill=F] (79.96,-249.52) circle (0.50mm);
\path[line width=0.20mm, draw=L, fill=F] (70.07,-244.81) circle (0.50mm);
\path[line width=0.20mm, draw=L] (70.07,-244.62) -- (65.08,-234.92);
\path[line width=0.20mm, draw=L, fill=F] (65.08,-234.92) circle (0.50mm);
\path[line width=0.20mm, draw=L, fill=F] (75.16,-234.73) circle (0.50mm);
\path[line width=0.20mm, draw=L] (75.16,-234.73) -- (69.89,-244.44);
\path[line width=0.20mm, draw=L, fill=F] (70.01,-230.02) circle (0.50mm);
\path[line width=0.20mm, draw=L] (70.01,-230.02) -- (65.02,-234.92);
\path[line width=0.20mm, draw=L] (70.20,-230.02) -- (75.00,-234.92);
\path[line width=0.20mm, draw=L] (70.11,-229.74) -- (70.01,-254.93);
\draw(99.95,-263.75) node[anchor=base west]{\fontsize{12}{12}\selectfont $G_{9,218}$};
\draw(134.97,-263.84) node[anchor=base west]{\fontsize{12}{12}\selectfont $G_{10,350}$};
\draw(-5.11,-308.93) node[anchor=base west]{\fontsize{12}{12}\selectfont $G_{11,488}$};
\draw(29.91,-309.02) node[anchor=base west]{\fontsize{12}{12}\selectfont $G_{12,614}$};
\draw(64.84,-308.75) node[anchor=base west]{\fontsize{12}{12}\selectfont $G_{13,709}$};
\draw(100.04,-308.82) node[anchor=base west]{\fontsize{12}{12}\selectfont $G_{14,733}$};
\draw(134.98,-309.09) node[anchor=base west]{\fontsize{12}{12}\selectfont $G_{15,813}$};
\draw(-5.17,-353.83) node[anchor=base west]{\fontsize{12}{12}\selectfont $G_{16,833}$};
\draw(29.94,-353.57) node[anchor=base west]{\fontsize{12}{12}\selectfont $G_{17,844}$};
\draw(65.02,-353.99) node[anchor=base west]{\fontsize{12}{12}\selectfont $G_{18,849}$};
\draw(99.98,-353.94) node[anchor=base west]{\fontsize{12}{12}\selectfont $G_{19,851}$};
\draw(135.02,-353.92) node[anchor=base west]{\fontsize{12}{12}\selectfont $G_{20,852}$};
\path[line width=0.20mm, draw=L, fill=F] (0.01,-254.85) circle (0.50mm);
\path[line width=0.20mm, draw=L] (0.01,-254.85) -- (0.02,-235.01);
\path[line width=0.20mm, draw=L] (-0.03,-235.01) -- (-9.91,-230.01);
\path[line width=0.20mm, draw=L, fill=F] (-9.91,-230.01) circle (0.50mm);
\path[line width=0.20mm, draw=L] (-0.03,-235.01) -- (9.99,-230.08);
\path[line width=0.20mm, draw=L, fill=F] (9.99,-230.08) circle (0.50mm);
\path[line width=0.20mm, draw=L] (0.08,-244.86) -- (-9.97,-254.80);
\path[line width=0.20mm, draw=L, fill=F] (-9.90,-254.73) circle (0.50mm);
\path[line width=0.20mm, draw=L] (0.03,-245.00) -- (9.99,-254.91);
\path[line width=0.20mm, draw=L, fill=F] (9.99,-254.91) circle (0.50mm);
\path[line width=0.20mm, draw=L, fill=F] (0.04,-244.93) circle (0.50mm);
\path[line width=0.20mm, draw=L, fill=F] (0.02,-235.01) circle (0.50mm);
\path[line width=0.20mm, draw=L, fill=F] (35.01,-255.12) circle (0.50mm);
\path[line width=0.20mm, draw=L, fill=F] (69.97,-254.93) circle (0.50mm);
\path[line width=0.20mm, draw=L, fill=F] (95.08,-249.89) circle (0.50mm);
\path[line width=0.20mm, draw=L] (95.08,-249.89) -- (104.87,-244.81);
\path[line width=0.20mm, draw=L] (104.87,-244.81) -- (114.95,-249.52);
\path[line width=0.20mm, draw=L, fill=F] (114.95,-249.52) circle (0.50mm);
\path[line width=0.20mm, draw=L, fill=F] (105.06,-244.81) circle (0.50mm);
\path[line width=0.20mm, draw=L] (105.06,-244.62) -- (100.07,-234.92);
\path[line width=0.20mm, draw=L, fill=F] (100.07,-234.92) circle (0.50mm);
\path[line width=0.20mm, draw=L, fill=F] (110.14,-234.73) circle (0.50mm);
\path[line width=0.20mm, draw=L] (110.14,-234.73) -- (104.87,-244.44);
\path[line width=0.20mm, draw=L, fill=F] (105.00,-230.02) circle (0.50mm);
\path[line width=0.20mm, draw=L] (105.00,-230.02) -- (100.01,-234.92);
\path[line width=0.20mm, draw=L] (105.18,-230.02) -- (109.99,-234.92);
\path[line width=0.20mm, draw=L] (105.09,-229.74) -- (104.99,-254.93);
\path[line width=0.20mm, draw=L, fill=F] (104.96,-254.93) circle (0.50mm);
\path[line width=0.20mm, draw=L] (100.09,-234.93) -- (110.03,-234.98);
\path[line width=0.20mm, draw=L, fill=F] (130.05,-249.97) circle (0.50mm);
\path[line width=0.20mm, draw=L] (130.05,-249.97) -- (139.84,-244.88);
\path[line width=0.20mm, draw=L] (139.84,-244.88) -- (149.92,-249.60);
\path[line width=0.20mm, draw=L, fill=F] (149.92,-249.60) circle (0.50mm);
\path[line width=0.20mm, draw=L, fill=F] (140.03,-244.88) circle (0.50mm);
\path[line width=0.20mm, draw=L] (140.03,-244.70) -- (135.04,-235.00);
\path[line width=0.20mm, draw=L, fill=F] (135.04,-235.00) circle (0.50mm);
\path[line width=0.20mm, draw=L, fill=F] (145.11,-234.81) circle (0.50mm);
\path[line width=0.20mm, draw=L] (145.11,-234.81) -- (139.84,-244.51);
\path[line width=0.20mm, draw=L, fill=F] (139.97,-230.10) circle (0.50mm);
\path[line width=0.20mm, draw=L] (139.97,-230.10) -- (134.98,-235.00);
\path[line width=0.20mm, draw=L] (140.15,-230.10) -- (144.96,-235.00);
\path[line width=0.20mm, draw=L] (140.06,-229.82) -- (139.96,-255.00);
\path[line width=0.20mm, draw=L, fill=F] (139.93,-255.00) circle (0.50mm);
\path[line width=0.20mm, draw=L] (135.06,-235.01) -- (145.00,-235.06);
\path[line width=0.20mm, draw=L] (150.04,-249.85) -- (144.99,-234.95);
\path[line width=0.20mm, draw=L, fill=F] (9.97,-284.79) circle (0.50mm);
\path[line width=0.20mm, draw=L, fill=F] (-10.06,-284.83) circle (0.50mm);
\path[line width=0.20mm, draw=L] (-10.06,-284.83) -- (10.01,-284.81);
\path[line width=0.20mm, draw=L, fill=F] (2.51,-284.79) circle (0.50mm);
\path[line width=0.20mm, draw=L] (2.51,-284.79) -- (-3.87,-275.04);
\path[line width=0.20mm, draw=L] (-3.87,-275.04) -- (-9.99,-285.00);
\path[line width=0.20mm, draw=L, fill=F] (-3.83,-275.15) circle (0.50mm);
\path[line width=0.20mm, draw=L] (-3.76,-275.24) -- (-3.72,-294.84);
\path[line width=0.20mm, draw=L] (-3.76,-294.91) -- (-9.96,-284.95);
\path[line width=0.20mm, draw=L] (-3.69,-294.84) -- (2.62,-284.99);
\path[line width=0.20mm, draw=L, fill=F] (-3.72,-294.84) circle (0.50mm);
\path[line width=0.20mm, draw=L] (10.03,-284.95) -- (-3.76,-294.98);
\path[line width=0.20mm, draw=L] (10.18,-285.06) -- (-3.72,-275.14);
\path[line width=0.20mm, draw=L] (-10.07,-300.02) -- (-3.73,-295.00);
\path[line width=0.20mm, draw=L] (-3.73,-295.00) -- (3.13,-300.11);
\path[line width=0.20mm, draw=L, fill=F] (3.00,-300.04) circle (0.50mm);
\path[line width=0.20mm, draw=L, fill=F] (-10.00,-299.99) circle (0.50mm);
\path[line width=0.20mm, draw=L] (34.98,-279.82) -- (25.10,-289.64);
\path[line width=0.20mm, draw=L] (35.10,-279.82) -- (45.01,-289.77);
\path[line width=0.20mm, draw=L] (45.08,-289.80) -- (40.03,-299.75);
\path[line width=0.20mm, draw=L] (40.03,-299.78) -- (30.03,-299.78);
\path[line width=0.20mm, draw=L] (30.03,-299.78) -- (25.06,-289.72);
\path[line width=0.20mm, draw=L] (35.00,-279.78) -- (30.00,-299.78);
\path[line width=0.20mm, draw=L] (30.06,-299.75) -- (45.03,-289.83);
\path[line width=0.20mm, draw=L] (45.03,-289.83) -- (25.06,-289.75);
\path[line width=0.20mm, draw=L] (25.06,-289.75) -- (40.00,-299.67);
\path[line width=0.20mm, draw=L] (40.00,-299.67) -- (35.06,-279.83);
\path[line width=0.20mm, draw=L, fill=F] (35.06,-279.83) circle (0.50mm);
\path[line width=0.20mm, draw=L, fill=F] (24.97,-289.72) circle (0.50mm);
\path[line width=0.20mm, draw=L, fill=F] (45.00,-289.83) circle (0.50mm);
\path[line width=0.20mm, draw=L, fill=F] (40.06,-299.67) circle (0.50mm);
\path[line width=0.20mm, draw=L, fill=F] (30.03,-299.72) circle (0.50mm);
\path[line width=0.20mm, draw=L] (35.04,-279.91) -- (24.91,-274.90);
\path[line width=0.20mm, draw=L] (35.17,-280.05) -- (44.98,-275.08);
\path[line width=0.20mm, draw=L, fill=F] (44.98,-275.08) circle (0.50mm);
\path[line width=0.20mm, draw=L, fill=F] (25.05,-275.04) circle (0.50mm);
\path[line width=0.20mm, draw=L] (69.88,-279.99) -- (60.00,-289.81);
\path[line width=0.20mm, draw=L] (70.00,-279.99) -- (79.91,-289.94);
\path[line width=0.20mm, draw=L] (79.97,-289.97) -- (74.93,-299.92);
\path[line width=0.20mm, draw=L] (74.93,-299.95) -- (64.93,-299.95);
\path[line width=0.20mm, draw=L] (64.93,-299.95) -- (59.95,-289.89);
\path[line width=0.20mm, draw=L] (69.90,-279.95) -- (64.90,-299.95);
\path[line width=0.20mm, draw=L] (64.95,-299.92) -- (79.93,-290.00);
\path[line width=0.20mm, draw=L] (79.93,-290.00) -- (59.95,-289.92);
\path[line width=0.20mm, draw=L] (59.95,-289.92) -- (74.90,-299.84);
\path[line width=0.20mm, draw=L] (74.90,-299.84) -- (69.95,-280.00);
\path[line width=0.20mm, draw=L, fill=F] (69.95,-280.00) circle (0.50mm);
\path[line width=0.20mm, draw=L, fill=F] (59.87,-289.89) circle (0.50mm);
\path[line width=0.20mm, draw=L, fill=F] (79.90,-290.00) circle (0.50mm);
\path[line width=0.20mm, draw=L, fill=F] (74.95,-299.84) circle (0.50mm);
\path[line width=0.20mm, draw=L, fill=F] (64.93,-299.89) circle (0.50mm);
\path[line width=0.20mm, draw=L] (69.93,-280.08) -- (59.80,-275.07);
\path[line width=0.20mm, draw=L] (70.07,-280.22) -- (79.87,-275.25);
\path[line width=0.20mm, draw=L, fill=F] (79.87,-275.25) circle (0.50mm);
\path[line width=0.20mm, draw=L, fill=F] (59.94,-275.21) circle (0.50mm);
\path[line width=0.20mm, draw=L] (79.96,-275.04) -- (79.82,-290.02);
\path[line width=0.20mm, draw=L] (104.92,-283.09) -- (95.06,-289.73);
\path[line width=0.20mm, draw=L] (104.98,-283.06) -- (114.97,-289.86);
\path[line width=0.20mm, draw=L] (115.03,-289.89) -- (109.99,-299.84);
\path[line width=0.20mm, draw=L] (109.99,-299.87) -- (99.98,-299.87);
\path[line width=0.20mm, draw=L] (99.98,-299.87) -- (95.01,-289.81);
\path[line width=0.20mm, draw=L] (104.92,-283.06) -- (99.96,-299.87);
\path[line width=0.20mm, draw=L] (100.01,-299.84) -- (114.99,-289.92);
\path[line width=0.20mm, draw=L] (114.99,-289.92) -- (95.01,-289.84);
\path[line width=0.20mm, draw=L] (95.01,-289.84) -- (109.96,-299.75);
\path[line width=0.20mm, draw=L] (109.96,-299.75) -- (105.05,-283.16);
\path[line width=0.20mm, draw=L, fill=F] (94.93,-289.81) circle (0.50mm);
\path[line width=0.20mm, draw=L, fill=F] (114.96,-289.92) circle (0.50mm);
\path[line width=0.20mm, draw=L, fill=F] (110.01,-299.75) circle (0.50mm);
\path[line width=0.20mm, draw=L, fill=F] (99.98,-299.81) circle (0.50mm);
\path[line width=0.20mm, draw=L, fill=F] (105.06,-275.01) circle (0.50mm);
\path[line width=0.20mm, draw=L] (105.06,-275.01) -- (94.94,-289.60);
\path[line width=0.20mm, draw=L] (105.19,-275.16) -- (114.97,-290.06);
\path[line width=0.20mm, draw=L, fill=F] (104.92,-283.09) circle (0.50mm);
\path[line width=0.20mm, draw=L] (104.92,-283.09) -- (104.98,-275.14);
\path[line width=0.20mm, draw=L] (104.95,-283.16) -- (114.97,-279.87);
\path[line width=0.20mm, draw=L, fill=F] (114.97,-279.87) circle (0.50mm);
\path[line width=0.20mm, draw=L] (140.09,-299.88) -- (150.13,-290.12);
\path[line width=0.20mm, draw=L] (140.03,-299.92) -- (130.18,-289.85);
\path[line width=0.20mm, draw=L] (130.07,-289.96) -- (136.96,-283.01);
\path[line width=0.20mm, draw=L] (137.04,-283.01) -- (142.73,-283.05);
\path[line width=0.20mm, draw=L] (142.73,-282.94) -- (150.02,-290.18);
\path[line width=0.20mm, draw=L] (140.09,-299.92) -- (142.78,-282.89);
\path[line width=0.20mm, draw=L] (142.67,-283.05) -- (130.13,-290.01);
\path[line width=0.20mm, draw=L] (130.07,-289.85) -- (149.97,-290.07);
\path[line width=0.20mm, draw=L] (149.97,-290.07) -- (137.01,-283.01);
\path[line width=0.20mm, draw=L] (137.06,-283.06) -- (139.96,-299.82);
\path[line width=0.20mm, draw=L, fill=F] (140.09,-299.88) circle (0.50mm);
\path[line width=0.20mm, draw=L, fill=F] (140.00,-274.82) circle (0.50mm);
\path[line width=0.20mm, draw=L] (140.00,-274.82) -- (137.01,-282.96);
\path[line width=0.20mm, draw=L] (140.04,-274.82) -- (142.73,-283.05);
\path[line width=0.20mm, draw=L] (140.08,-274.93) -- (150.02,-290.01);
\path[line width=0.20mm, draw=L] (139.91,-274.87) -- (130.07,-289.96);
\path[line width=0.20mm, draw=L, fill=F] (130.35,-289.85) circle (0.50mm);
\path[line width=0.20mm, draw=L, fill=F] (150.02,-290.07) circle (0.50mm);
\path[line width=0.20mm, draw=L, fill=F] (142.78,-283.22) circle (0.50mm);
\path[line width=0.20mm, draw=L, fill=F] (149.88,-279.84) circle (0.50mm);
\path[line width=0.20mm, draw=L] (149.88,-279.84) -- (142.72,-283.06);
\path[line width=0.20mm, draw=L, fill=F] (137.01,-283.01) circle (0.50mm);
\path[line width=0.20mm, draw=L] (0.01,-344.85) -- (10.04,-335.09);
\path[line width=0.20mm, draw=L] (-0.06,-344.88) -- (-9.90,-334.82);
\path[line width=0.20mm, draw=L] (-10.01,-334.93) -- (-3.13,-327.97);
\path[line width=0.20mm, draw=L] (-3.05,-327.97) -- (2.64,-328.02);
\path[line width=0.20mm, draw=L] (2.64,-327.91) -- (9.93,-335.15);
\path[line width=0.20mm, draw=L] (0.01,-344.88) -- (2.70,-327.85);
\path[line width=0.20mm, draw=L] (2.58,-328.02) -- (-9.96,-334.98);
\path[line width=0.20mm, draw=L] (-10.01,-334.82) -- (9.88,-335.04);
\path[line width=0.20mm, draw=L] (9.88,-335.04) -- (-3.08,-327.97);
\path[line width=0.20mm, draw=L] (-3.02,-328.02) -- (-0.13,-344.79);
\path[line width=0.20mm, draw=L, fill=F] (0.01,-344.85) circle (0.50mm);
\path[line width=0.20mm, draw=L, fill=F] (-0.09,-319.78) circle (0.50mm);
\path[line width=0.20mm, draw=L] (-0.09,-319.78) -- (-3.08,-327.92);
\path[line width=0.20mm, draw=L] (-0.04,-319.78) -- (2.64,-328.02);
\path[line width=0.20mm, draw=L] (-0.01,-319.90) -- (9.93,-334.98);
\path[line width=0.20mm, draw=L] (-0.18,-319.84) -- (-10.01,-334.93);
\path[line width=0.20mm, draw=L, fill=F] (-9.74,-334.82) circle (0.50mm);
\path[line width=0.20mm, draw=L, fill=F] (9.93,-335.04) circle (0.50mm);
\path[line width=0.20mm, draw=L, fill=F] (2.70,-328.18) circle (0.50mm);
\path[line width=0.20mm, draw=L, fill=F] (9.79,-324.80) circle (0.50mm);
\path[line width=0.20mm, draw=L] (9.79,-324.80) -- (2.63,-328.02);
\path[line width=0.20mm, draw=L, fill=F] (-3.08,-327.97) circle (0.50mm);
\path[line width=0.20mm, draw=L] (9.81,-324.81) -- (9.93,-335.11);
\path[line width=0.20mm, draw=L] (105.60,-344.69) -- (115.00,-340.00);
\path[line width=0.20mm, draw=L] (105.57,-344.71) -- (95.04,-339.98);
\path[line width=0.20mm, draw=L] (95.04,-339.98) -- (101.97,-333.93);
\path[line width=0.20mm, draw=L] (109.04,-333.99) -- (114.98,-339.98);
\path[line width=0.20mm, draw=L] (105.59,-344.72) -- (109.03,-333.99);
\path[line width=0.20mm, draw=L] (109.03,-334.00) -- (95.06,-339.90);
\path[line width=0.20mm, draw=L] (95.03,-339.99) -- (114.98,-340.00);
\path[line width=0.20mm, draw=L] (114.95,-339.98) -- (102.01,-333.96);
\path[line width=0.20mm, draw=L] (102.02,-333.89) -- (105.55,-344.74);
\path[line width=0.20mm, draw=L] (105.23,-319.91) -- (102.00,-333.93);
\path[line width=0.20mm, draw=L] (105.36,-319.87) -- (109.04,-333.98);
\path[line width=0.20mm, draw=L] (105.36,-319.86) -- (115.34,-339.91);
\path[line width=0.20mm, draw=L] (105.20,-319.91) -- (95.04,-339.88);
\path[line width=0.20mm, draw=L] (105.43,-325.97) -- (105.57,-344.66);
\path[line width=0.20mm, draw=L, fill=F] (105.63,-344.69) circle (0.50mm);
\path[line width=0.20mm, draw=L] (102.00,-333.99) -- (109.03,-333.96);
\path[line width=0.20mm, draw=L] (105.48,-326.06) -- (102.05,-333.98);
\path[line width=0.20mm, draw=L] (105.48,-326.03) -- (108.96,-333.93);
\path[line width=0.20mm, draw=L] (105.49,-326.02) -- (114.87,-339.88);
\path[line width=0.20mm, draw=L] (105.46,-326.03) -- (95.06,-339.98);
\path[line width=0.20mm, draw=L, fill=F] (109.06,-334.06) circle (0.50mm);
\path[line width=0.20mm, draw=L, fill=F] (102.04,-333.95) circle (0.50mm);
\path[line width=0.20mm, draw=L, fill=F] (95.05,-340.01) circle (0.50mm);
\path[line width=0.20mm, draw=L, fill=F] (114.94,-339.97) circle (0.50mm);
\path[line width=0.20mm, draw=L, fill=F] (105.48,-326.05) circle (0.50mm);
\path[line width=0.20mm, draw=L, fill=F] (105.29,-319.92) circle (0.50mm);
\path[line width=0.20mm, draw=L] (140.68,-344.59) -- (150.09,-339.91);
\path[line width=0.20mm, draw=L] (140.65,-344.62) -- (130.12,-339.88);
\path[line width=0.20mm, draw=L] (130.12,-339.88) -- (137.05,-333.83);
\path[line width=0.20mm, draw=L] (144.12,-333.89) -- (150.07,-339.88);
\path[line width=0.20mm, draw=L] (140.67,-344.63) -- (144.11,-333.89);
\path[line width=0.20mm, draw=L] (144.11,-333.90) -- (130.14,-339.81);
\path[line width=0.20mm, draw=L] (130.12,-339.89) -- (150.07,-339.91);
\path[line width=0.20mm, draw=L] (150.03,-339.88) -- (137.09,-333.86);
\path[line width=0.20mm, draw=L] (137.10,-333.80) -- (140.63,-344.64);
\path[line width=0.20mm, draw=L] (140.32,-319.82) -- (137.09,-333.83);
\path[line width=0.20mm, draw=L] (140.44,-319.77) -- (144.13,-333.88);
\path[line width=0.20mm, draw=L] (140.44,-319.76) -- (150.43,-339.81);
\path[line width=0.20mm, draw=L] (140.29,-319.82) -- (130.12,-339.79);
\path[line width=0.20mm, draw=L] (140.49,-320.07) -- (140.65,-344.56);
\path[line width=0.20mm, draw=L, fill=F] (140.72,-344.59) circle (0.50mm);
\path[line width=0.20mm, draw=L] (137.08,-333.90) -- (144.11,-333.86);
\path[line width=0.20mm, draw=L] (140.56,-325.96) -- (137.13,-333.88);
\path[line width=0.20mm, draw=L] (140.56,-325.93) -- (144.05,-333.83);
\path[line width=0.20mm, draw=L] (140.57,-325.92) -- (149.95,-339.78);
\path[line width=0.20mm, draw=L] (140.54,-325.93) -- (130.14,-339.88);
\path[line width=0.20mm, draw=L, fill=F] (144.15,-333.96) circle (0.50mm);
\path[line width=0.20mm, draw=L, fill=F] (137.12,-333.85) circle (0.50mm);
\path[line width=0.20mm, draw=L, fill=F] (130.14,-339.91) circle (0.50mm);
\path[line width=0.20mm, draw=L, fill=F] (150.02,-339.87) circle (0.50mm);
\path[line width=0.20mm, draw=L, fill=F] (140.56,-325.95) circle (0.50mm);
\path[line width=0.20mm, draw=L, fill=F] (140.38,-319.82) circle (0.50mm);
\path[line width=0.20mm, draw=L] (34.92,-326.11) -- (25.52,-330.80);
\path[line width=0.20mm, draw=L] (34.96,-326.09) -- (45.48,-330.82);
\path[line width=0.20mm, draw=L] (45.49,-330.82) -- (38.56,-336.87);
\path[line width=0.20mm, draw=L] (31.49,-336.81) -- (25.54,-330.82);
\path[line width=0.20mm, draw=L] (34.94,-326.08) -- (31.49,-336.81);
\path[line width=0.20mm, draw=L] (31.50,-336.80) -- (45.47,-330.90);
\path[line width=0.20mm, draw=L] (45.49,-330.81) -- (25.54,-330.80);
\path[line width=0.20mm, draw=L] (25.58,-330.82) -- (38.51,-336.84);
\path[line width=0.20mm, draw=L] (38.51,-336.91) -- (34.97,-326.06);
\path[line width=0.20mm, draw=L, fill=F] (34.89,-326.11) circle (0.50mm);
\path[line width=0.20mm, draw=L] (38.53,-336.80) -- (31.50,-336.84);
\path[line width=0.20mm, draw=L] (35.05,-344.74) -- (38.48,-336.82);
\path[line width=0.20mm, draw=L] (35.05,-344.77) -- (31.56,-336.87);
\path[line width=0.20mm, draw=L] (35.03,-344.78) -- (25.66,-330.92);
\path[line width=0.20mm, draw=L] (35.07,-344.77) -- (45.47,-330.82);
\path[line width=0.20mm, draw=L, fill=F] (31.46,-336.74) circle (0.50mm);
\path[line width=0.20mm, draw=L, fill=F] (38.48,-336.85) circle (0.50mm);
\path[line width=0.20mm, draw=L, fill=F] (45.47,-330.79) circle (0.50mm);
\path[line width=0.20mm, draw=L, fill=F] (25.58,-330.83) circle (0.50mm);
\path[line width=0.20mm, draw=L, fill=F] (35.05,-344.75) circle (0.50mm);
\path[line width=0.20mm, draw=L, fill=F] (34.92,-319.68) circle (0.50mm);
\path[line width=0.20mm, draw=L] (34.92,-319.68) -- (25.60,-330.72);
\path[line width=0.20mm, draw=L] (34.99,-319.61) -- (45.31,-331.00);
\path[line width=0.20mm, draw=L] (34.90,-344.83) -- (35.03,-326.51);
\path[line width=0.20mm, draw=L] (69.89,-326.21) -- (60.49,-330.90);
\path[line width=0.20mm, draw=L] (69.92,-326.19) -- (80.45,-330.92);
\path[line width=0.20mm, draw=L] (80.45,-330.92) -- (73.52,-336.97);
\path[line width=0.20mm, draw=L] (66.45,-336.91) -- (60.51,-330.92);
\path[line width=0.20mm, draw=L] (69.90,-326.18) -- (66.46,-336.91);
\path[line width=0.20mm, draw=L] (66.46,-336.91) -- (80.43,-331.00);
\path[line width=0.20mm, draw=L] (80.46,-330.91) -- (60.51,-330.90);
\path[line width=0.20mm, draw=L] (60.54,-330.92) -- (73.48,-336.94);
\path[line width=0.20mm, draw=L] (73.47,-337.01) -- (69.94,-326.16);
\path[line width=0.20mm, draw=L, fill=F] (69.86,-326.21) circle (0.50mm);
\path[line width=0.20mm, draw=L] (73.49,-336.91) -- (66.46,-336.94);
\path[line width=0.20mm, draw=L] (70.01,-344.84) -- (73.44,-336.92);
\path[line width=0.20mm, draw=L] (70.01,-344.87) -- (66.53,-336.97);
\path[line width=0.20mm, draw=L] (70.00,-344.88) -- (60.62,-331.02);
\path[line width=0.20mm, draw=L] (70.03,-344.87) -- (80.43,-330.92);
\path[line width=0.20mm, draw=L, fill=F] (66.43,-336.84) circle (0.50mm);
\path[line width=0.20mm, draw=L, fill=F] (73.45,-336.95) circle (0.50mm);
\path[line width=0.20mm, draw=L, fill=F] (80.44,-330.89) circle (0.50mm);
\path[line width=0.20mm, draw=L, fill=F] (60.55,-330.93) circle (0.50mm);
\path[line width=0.20mm, draw=L, fill=F] (70.01,-344.85) circle (0.50mm);
\path[line width=0.20mm, draw=L, fill=F] (69.88,-319.78) circle (0.50mm);
\path[line width=0.20mm, draw=L] (69.88,-319.78) -- (60.57,-330.82);
\path[line width=0.20mm, draw=L] (69.96,-319.71) -- (80.27,-331.11);
\path[line width=0.20mm, draw=L] (69.87,-344.93) -- (69.93,-319.91);
\end{tikzpicture}%
\caption{Graphs $G_{s,t}$ in Lemma \ref{Sevenverticesgraph} and Theorem \ref{thm:refined upper bound-1}}\label{F1}
\end{figure}

\vskip0.2in

\begin{lemma}\label{Sevenverticesgraph}
Let $G$ be a connected graph with $7$ vertices and $m$ edges and $G\neq S_7, S^1_6, K_7$, $K^1_{6}$ and $S_7^e$.
Then we have the following:

$(1)$ If $m=6$, then $\rho(G)\le \rho(G_{6, 9})\approx 2.17533$, with equality if and only if $G=G_{6, 9}$;

$(2)$ If $m=7$, then $\rho(G)\le \rho(G_{7, 43})\approx 2.59440$, with equality if and only if $G=G_{7, 43}$;

$(3)$ If $m=8$, then $\rho(G)\le \rho(G_{8, 111})\approx 2.94388$, with equality if and only if $G=G_{8, 111}$;

$(4)$ If $m=9$, then $\rho(G)\le \rho(G_{9, 218})\approx 3.27307$, with equality if and only if $G=G_{9, 218}$;

$(5)$ If $m=10$, then $\rho(G)\le \rho(G_{10, 350})\approx 3.48767$, with equality if and only if $G=G_{10, 350}$;

$(6)$ If $m=11$, then $\rho(G)\le \rho(G_{11, 488})\approx 3.77846$, with equality if and only if $G=G_{11, 488}$;

$(7)$ If $m=12$, then $\rho(G)\le \rho(G_{12, 614})\approx 4.10548$, with equality if and only if $G=G_{12, 614}$;

$(8)$ If $m=13$, then $\rho(G)\le \rho(G_{13, 709})\approx 4.25327$, with equality if and only if $G=G_{13, 709}$;

$(9)$ If $m=14$, then $\rho(G)\le \rho(G_{14, 773})\approx 4.47415$, with equality if and only if $G=G_{14, 773}$;

$(10)$ If $m=15$, then $\rho(G)\le \rho(G_{15, 813})\approx 4.74146$, with equality if and only if $G=G_{15, 813}$;

$(11)$ If $m=16$, then $\rho(G)\le \rho(G_{16, 833})\approx 4.85952$, with equality if and only if $G=G_{16, 833}$;

$(12)$ If $m=17$, then $\rho(G)\le \rho(G_{17, 844})\approx 5.13555$, with equality if and only if $G=G_{17, 844}$;

$(13)$ If $m=18$, then $\rho(G)\le \rho(G_{18, 849})\approx 5.29654$, with equality if and only if $G=G_{18, 849}$;

$(14)$ If $m=19$, then $\rho(G)\le \rho(G_{19, 851})\approx 5.50331$, with equality if and only if $G=G_{19, 851}$;

$(15)$ If $m=20$, then $\rho(G)\le \rho(G_{20, 852})\approx 5.74166$, with equality if and only if $G=G_{20, 852}$.
\end{lemma}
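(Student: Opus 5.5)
This statement is a finite check over connected graphs on seven vertices, and I would prove it by direct computation against the tables of spectra of connected graphs with seven vertices in \cite{Cvetkovicrecentresults}. These tables list all $853$ connected graphs of order $7$, indexed by size $s$ ($6\le s\le 21$), together with their spectral radii. For each fixed $m\in\{6,\ldots,20\}$ the plan is to go through the block $G_{m,1},G_{m,2},\ldots$ of the tables. From that block I discard the excluded graphs of that size: $S_7=G_{6,11}$ and $S^1_6=G_{6,10}$ when $m=6$, $S_7^e=G_{7,44}$ when $m=7$, and $K^1_6=G_{16,834}$ when $m=16$. The case $m=21$ is only $K_7$, so it is excluded entirely. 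Among the remaining graphs I locate the unique one of maximum spectral radius, and I check that it is the graph named in the corresponding item of the statement.

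To make this less a bare table lookup and more checkable, I would organize each size class using two standard monotonicity facts. First, $\rho$ strictly increases under adding an edge to a connected graph (Perron--Frobenius). Second, $\rho$ does not decrease under Kelmans-type edge shifting toward a vertex of larger Perron component. The shifting operation implies that, for fixed $(n,m)$, a graph maximizing $\rho$ must be a threshold-like graph, one whose neighborhoods are nested. Once the excluded graphs are removed, however, the maximizer need not be threshold. For instance, $G_{6,9}$ is a tree that is neither a star nor $S^1_6$, and when $m=7$ the graph $S_7^e$ is removed. So in each class I would compare directly only a short list of candidates: threshold graphs other than the forbidden ones, together with graphs one shift away from a forbidden graph. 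All other graphs of that size are dominated by one of these candidates through a shifting or edge-addition argument.

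For each candidate I compute the characteristic polynomial, using the equitable quotient matrix where the graph has a large automorphism group. I then compare the largest roots numerically to the stated precision of five decimals. Finally I confirm strictness by checking that the runner-up's $\rho$ is strictly smaller, which gives the uniqueness part of each item (1)--(15).

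The main obstacle is rigor and completeness in the middle range $m\approx 10$--$15$, where the number of graphs is large (hundreds per class) and the forbidden graphs no longer constrain the answer. Here I expect the maximizer simply to be the global maximizer among connected graphs of that size, a quasi-complete or threshold graph. In those classes the cleanest route is to invoke the known characterization of graphs maximizing $\rho$ for given $(n,m)$ and check that none of $S_7,S^1_6,K_7,K^1_6,S_7^e$ has that size. Where that characterization does not cleanly apply, I fall back on the exhaustive table comparison. I would first try the table scan, since it is finite and fully determined, and use the shifting argument only as a sanity check on which graph appears.
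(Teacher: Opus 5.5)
Your proposal is correct and matches the paper's own proof. The paper simply notes $6\le m\le 20$ (since $G\neq K_7$) and reads the maximum spectral radius in each size class off the tables of connected seven-vertex graphs in \cite{Cvetkovicrecentresults}, after discarding the excluded graphs. Your shifting/threshold discussion is an optional sanity check. If you did rely on it, note two things: edge addition changes $m$, so it cannot dominate graphs within a fixed size class, and each shift must be chosen so that the graph stays connected.
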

\begin{proof}
Since $G\not=K_7$, we know $6\le m\le 20$.
The result follows by checking the tables: Spectra of graphs with seven vertices (see \cite{Cvetkovicrecentresults}).
\end{proof}

Recall that $\mathcal{H}_n=\{S_{n}, S^1_{n-1}, K_n, K^1_{n-1}\}$.
Now we are going to give the refinement of Lemma \ref{lem:Hong's upper bound-1}.

\begin{theorem}\label{thm:refined upper bound-1}
If $G$ is a connected graph with $n \ge 10$ vertices and $m$ edges, and
$G \notin \mathcal{H}_n$, then
$$\rho(G)<\sqrt{2m-n}.$$
\end{theorem}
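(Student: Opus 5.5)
The plan is to apply Theorem \ref{inducedsubgraph} with a $7$-vertex connected induced subgraph $G'$ that avoids the exceptional family $\{S_7,S^1_6,K_7,K^1_6,S^e_7\}$. Then Lemma \ref{Sevenverticesgraph} bounds $\rho(G')$.

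Theorem \ref{inducedsubgraph} gives
$$\rho^2(G)\le 2m-n-2m'+7+\rho^2(G').$$
Hence it suffices to check that $\rho^2(G')<2m'-7$ for every admissible $G'$ with $m'=m(G')$. By Lemma \ref{Sevenverticesgraph}, the worst cases are the listed extremal graphs $G_{m',t}$. Checking each:
\begin{itemize}
\item $m'=6$: $2.17533^2\approx 4.73<5$;
\item $m'=7$: $2.5944^2\approx 6.73<7$;
\item $m'=8$: $\approx 8.67<9$;
\item $m'=9$: $\approx 10.71<11$;
\item $m'=10$: $\approx 12.16<13$;
\item continuing up to $m'=20$: $5.74166^2\approx 32.97<33$.
\end{itemize}
All fifteen cases hold strictly, so $\rho(G)<\sqrt{2m-n}$. (The case $G'=K_7$ is excluded, but it would give equality in any case, which is why it must be avoided.)

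The main work is to produce such a $G'$ whenever $G\notin\mathcal{H}_n$ and $n\ge 10$. I would use Lemma \ref{Guonotexist} iteratively. Suppose the current connected graph $H$, on $k\ge 8$ vertices, is not in $\{S_k,S^1_{k-1},K_k,K^1_{k-1},S^e_k\}$. Then some vertex $u$ has $H-u$ connected and outside the corresponding family on $k-1$ vertices. Deleting such vertices repeatedly, starting from $H=G$, reaches a connected $7$-vertex graph outside the exceptional family. The result is an induced subgraph of $G$, since we only delete vertices.

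To start the iteration, $G$ itself must avoid $S^e_n$. So the case $G=S^e_n$ has to be handled directly. For $G=S^e_n$ we have $m=n$ and $\sqrt{2m-n}=\sqrt n$. One can compute $\rho(S^e_n)$ from its quotient matrix, or argue as follows. $S^e_n$ is $K_{1,n-1}$ plus one edge, so
$$\rho^2\le \rho^2(S_n)+\text{small}=n-1+O(1/\sqrt n)<n.$$
Concretely, $\rho$ is the largest root of $x^3-x^2-(n-1)x+n-3=0$. Evaluating this cubic at $\sqrt n$ gives
$$n\sqrt n-n-(n-1)\sqrt n+n-3=\sqrt n-3>0$$
for $n\ge 10$. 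The cubic is increasing beyond its largest root, so $\rho<\sqrt n$ and this case is settled.

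I expect the main obstacle to be this reduction step. One must check that the hypotheses of Lemma \ref{Guonotexist} hold at every stage, with $k\ge 8$ so that each deletion lands on $k-1\ge 7$ vertices. One must also treat $S^e_n$ separately as above. The numerical verification against Lemma \ref{Sevenverticesgraph} is routine.
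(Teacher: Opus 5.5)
Your proof is correct and follows the paper's route: iterate Lemma \ref{Guonotexist} down to a connected $7$-vertex induced subgraph outside $\{S_7,S^1_6,K_7,K^1_6,S^e_7\}$, then combine Theorem \ref{inducedsubgraph} with the fifteen cases of Lemma \ref{Sevenverticesgraph}. The only difference is the case $G=S^e_n$, and a small point about your aside there. The paper settles $S^e_n$ by applying Theorem \ref{inducedsubgraph} with $G'=S^e_9$: since $\rho(S^e_9)=3$ the bound is exactly $\sqrt{2m-n}$, and strictness comes from the equality case. Your evaluation of $f(x)=x^3-x^2-(n-1)x+n-3$ at $\sqrt n$ also works, but you should add that $f(1)=-2<0$. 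Then $\rho$ is the unique root in $(1,\infty)$, and $f(\sqrt n)>0$ places $\sqrt n$ beyond it. Your aside on $K_7$ is inaccurate: $\rho^2(K_7)=36>35=2m'-7$, so $K_7$ would break the bound rather than give equality.
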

\begin{proof}
Let $G$ be a connected graph which satisfies the hypothesis of this theorem. 
If $G=S_n^e$, then let
$G'=S_9^e$. It is easy to see that $|V(S_9^e)|=|E(S_9^e)|=9$ and $\rho(S_9^e)=3$.
From Theorem \ref{inducedsubgraph},  we have
$$\rho(G)< \sqrt{2m-n-18+9+\rho^2(S_9^e)}=\sqrt{2m-n}.$$

In the following, we suppose that $G\neq S_n^e$. 
Apply Lemma \ref{Guonotexist} to $G$ for $n-7$ times,
then there exists a connected graph, say $G'$, with 7 vertices such that
$G'\neq S_7, S^1_{6}, K_7$, $K^1_{6}$ and $S_7^e$. Then $6\le |E(G')|\le 20$.
From Theorem \ref{inducedsubgraph},
we have
$$\rho(G)<\sqrt{2m-n-2m(G')+7+\rho^2(G')}.$$
Combining with Lemma \ref{Sevenverticesgraph}, we get

\noindent(1) If $m(G')=6$, then $\rho(G)< \sqrt{2m-n-5+\rho^2(G_{6, 9})}\le \sqrt{2m-n-5+4.7321}$;

\noindent(2) If $m(G')=7$, then $\rho(G)< \sqrt{2m-n-7+\rho^2(G_{7, 43})}\le \sqrt{2m-n-7+6.7310}$;

\noindent(3) If $m(G')=8$, then $\rho(G)< \sqrt{2m-n-9+\rho^2(G_{8, 111})}\le \sqrt{2m-n-9+8.6665}$;

\noindent(4) If $m(G')=9$, then $\rho(G)< \sqrt{2m-n-11+\rho^2(G_{9, 218})}\le \sqrt{2m-n-11+10.7130}$;

\noindent(5) If $m(G')=10$, then $\rho(G)< \sqrt{2m-n-13+\rho^2(G_{10, 350})}\le \sqrt{2m-n-13+12.1640}$;

\noindent(6) If $m(G')=11$, then $\rho(G)< \sqrt{2m-n-15+\rho^2(G_{11, 488})}\le \sqrt{2m-n-15+14.2768}$;

\noindent(7) If $m(G')=12$, then $\rho(G)< \sqrt{2m-n-17+\rho^2(G_{12, 614})}\le \sqrt{2m-n-17+16.8550}$;

\noindent(8) If $m(G')=13$, then $\rho(G)< \sqrt{2m-n-19+\rho^2(G_{13, 709})}\le \sqrt{2m-n-19+18.0904}$;

\noindent(9) If $m(G')=14$, then $\rho(G)< \sqrt{2m-n-21+\rho^2(G_{14, 773})}\le \sqrt{2m-n-21+20.0181}$;

\noindent(10) If $m(G')=15$, then $\rho(G)< \sqrt{2m-n-23+\rho^2(G_{15, 813})}\le \sqrt{2m-n-23+22.4815}$;

\noindent(11) If $m(G')=16$, then $\rho(G)< \sqrt{2m-n-25+\rho^2(G_{16, 833})}\le \sqrt{2m-n-25+23.6150}$;

\noindent(12) If $m(G')=17$, then $\rho(G)< \sqrt{2m-n-27+\rho^2(G_{17, 844})}\le \sqrt{2m-n-27+26.3739}$;

\noindent(13) If $m(G')=18$, then $\rho(G)< \sqrt{2m-n-29+\rho^2(G_{18, 849})}\le \sqrt{2m-n-29+28.0534}$;

\noindent(14) If $m(G')=19$, then $\rho(G)< \sqrt{2m-n-31+\rho^2(G_{19, 851})}\le \sqrt{2m-n-31+30.2865}$;

\noindent(15) If $m(G')=20$, then $\rho(G)< \sqrt{2m-n-33+\rho^2(G_{20, 852})}\le \sqrt{2m-n-33+32.9667}$.
Thus, we obtain $\rho(G)<\sqrt{2m-n}$, completing the proof.
\end{proof}

\begin{corollary}
If $G$ is a connected simple graph with $n\ge 10$ vertices and $m$
edges, then $\sqrt{2m-n}<\rho(G)\le\sqrt{2m-n+1}$ if and only if $G\in \mathcal{H}_n$.
\end{corollary}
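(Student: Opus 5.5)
The corollary is an "if and only if", so I would prove the two directions separately. Both reduce quickly to results already in hand.

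\emph{Necessity.} Suppose $\sqrt{2m-n}<\rho(G)$. If $G\notin\mathcal{H}_n$, then Theorem \ref{thm:refined upper bound-1} applies, since $n\ge 10$ and $G$ is connected. It gives $\rho(G)<\sqrt{2m-n}$, a contradiction, so $G\in\mathcal{H}_n$. The right-hand inequality $\rho(G)\le\sqrt{2m-n+1}$ holds for every connected graph by Lemma \ref{lem:Hong's upper bound-1}, so it needs nothing further.

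\emph{Sufficiency.} Let $G\in\mathcal{H}_n$. The upper bound is again Lemma \ref{lem:Hong's upper bound-1}, so it remains to check $\rho(G)>\sqrt{2m-n}$ in each of the four cases. For $S_n$ we have $m=n-1$, so $\rho=\sqrt{n-1}=\sqrt{2m-n+1}>\sqrt{2m-n}$. For $K_n$ we have $2m-n=n^2-2n$, and $\rho=n-1=\sqrt{n^2-2n+1}>\sqrt{n^2-2n}$.

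For $S^1_{n-1}$ we have $m=n-1$, so the target is $\rho>\sqrt{n-2}$. The graph contains $S_{n-1}$ as a proper subgraph, and $S^1_{n-1}$ is connected. By Perron--Frobenius strict monotonicity (or by Cauchy interlacing, as the paper suggests, together with connectivity to get strictness), $\rho(S^1_{n-1})>\rho(S_{n-1})=\sqrt{n-2}$.

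For $K^1_{n-1}$ we have $m=\binom{n-1}{2}+1$, so $2m-n=(n-1)(n-2)+2-n=n^2-4n+4=(n-2)^2$. The graph properly contains $K_{n-1}$ and is connected, so $\rho(K^1_{n-1})>\rho(K_{n-1})=n-2=\sqrt{2m-n}$.

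The only step requiring any care is the strictness in the two pendant-edge cases. It follows from the standard fact that adding an edge to a connected graph, or attaching a vertex to it, strictly increases the spectral radius.
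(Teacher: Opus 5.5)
Your proposal is correct and follows essentially the same route as the paper. Necessity comes from Theorem~\ref{thm:refined upper bound-1}, the upper bound from Lemma~\ref{lem:Hong's upper bound-1}, and sufficiency from strict Perron--Frobenius monotonicity over the induced $S_{n-1}$ and $K_{n-1}$. The only cosmetic difference is that you verify $S_n$ and $K_n$ by direct computation rather than by citing the equality case of Hong's bound.
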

\begin{proof}
From Theorem \ref{thm:refined upper bound-1}, it suffices to check $\rho(G)>\sqrt{2m-n}$ for the graph $G$ in $\mathcal{H}_n$.
From Lemma \ref{lem:Hong's upper bound-1}, it has $\rho(G)=\sqrt{2m-n+1}$ if and only if $G$ is $S_n$ or $K_n$.
Note that $S^1_{n-1}$ contains $S_{n-1}$ as an induced subgraph, and $K^1_{n-1}$
contains $K_{n-1}$ as an induced subgraph. 
Then from the well-known Perron-Frobenius Theorem, we have
$$\rho(S^1_{1, n-1})>\rho(S_{n-1})=\sqrt{n-2}=\sqrt{2m(S^1_{n-1})-n},$$
and
$$\rho(K^1_{n-1})>\rho(K_{n-1})=n-2=\sqrt{2m(K^1_{n-1})-n}.$$
So the result holds.
\end{proof}

With the help of Sagemath, we list all exceptional graphs of order $n\le9$ with spectral radius at least $\sqrt{2m-n}$.

\begin{proposition}\label{prop:rho-except graph}
    Let $n\le 9$ and $G$ be a graph with $n$ vertices and $m$ edges.
    If $\rho(G)\ge \sqrt{2m-n}$, then $G\in \mathcal{H}_n$ or $G$ is one of graphs $R_1$--$R_{15}$ shown in Figure 2.
\end{proposition}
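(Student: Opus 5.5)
This is a finite statement, so the plan is an exhaustive computer search combined with a reduction that keeps the search small. Throughout I read the hypothesis as including connectivity, as in Theorem \ref{thm:refined upper bound-1}; otherwise isolated vertices would give trivial exceptions. For each $n\le 9$ I will enumerate all connected graphs on $n$ vertices up to isomorphism with \texttt{nauty geng -c}, called from Sagemath. There are 261080 connected graphs of order $9$, which is a manageable number. For each graph $G$ I compute $\rho(G)$ and compare $\rho^2(G)$ with $2m-n$. I record every graph with $\rho(G)\ge\sqrt{2m-n}$, discard those in $\mathcal{H}_n$, and check that the survivors are exactly $R_1$--$R_{15}$ in Figure 2.

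To cut the search and to make the numerics trustworthy, I will use Theorem \ref{inducedsubgraph} as a filter. Suppose $G$ has a proper connected induced subgraph $G'$, with $n'$ vertices and $m'$ edges, such that $\rho^2(G')<2m'-n'$. Then Theorem \ref{inducedsubgraph} gives $\rho^2(G)\le 2m-n-(2m'-n'-\rho^2(G'))<2m-n$, so $G$ is not exceptional. Hence an exceptional graph must have every proper connected induced subgraph either exceptional or in some $\mathcal{H}_{n'}$. This allows an inductive generation: build the order-$n$ candidates by adding one vertex, with all possible neighbourhoods, to exceptional or $\mathcal{H}$-graphs of order $n-1$. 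Lemma \ref{Guonotexist} justifies this, because a connected graph always has a vertex whose deletion leaves a connected graph. The inductive generation serves as an independent cross-check on the brute-force list.

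The expected obstacle is deciding equality $\rho^2(G)=2m-n$ reliably, since floating-point comparison is fragile near equality. I will therefore treat the borderline cases exactly. Whenever $|\rho^2(G)-(2m-n)|<10^{-6}$, I will check whether $\sqrt{2m-n}$ is a root of the characteristic polynomial using exact integer arithmetic in Sage. I will then confirm it is the largest root by counting real roots above it with Sturm sequences. Once the list is certified, I draw the fifteen graphs and confirm that they match Figure 2 up to isomorphism. As a sanity check, none of them should occur at $n=7$ in a way that contradicts Lemma \ref{Sevenverticesgraph}.
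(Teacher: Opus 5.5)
Your method is the paper's method: the paper's whole justification is an exhaustive Sagemath search. Your additions make that search more reliable. These are the exact treatment of near-equality cases via the characteristic polynomial and Sturm sequences, and the hereditary filter from Theorem~\ref{inducedsubgraph}, which is valid.

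The problem is your last step, ``check that the survivors are exactly $R_1$--$R_{15}$.'' Carried out correctly, this check fails, so your plan cannot establish the statement as written. Two connected graphs satisfy the hypothesis with equality but lie neither in $\mathcal{H}_n$ nor in Figure~2.
\begin{itemize}
\item The first is $C_4$. Here $n=m=4$ and $\rho(C_4)=2=\sqrt{2m-n}$. But $\mathcal{H}_4=\{S_4,S^1_3,K_4,K^1_3\}$ consists of $K_{1,3}$, $P_4$, $K_4$ and the paw, and the only four-vertex graph in Figure~2 is the diamond $R_1$.
\item The second is the double star obtained by joining the centres of two copies of $K_{1,2}$, i.e.\ the tree $\tilde D_5$. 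Here $n=6$, $m=5$ and the characteristic polynomial is $x^2(x^2-1)(x^2-4)$, so $\rho=2=\sqrt{2m-n}$. Its degree sequence is $(3,3,1,1,1,1)$, so it is neither $S_6$ nor $S^1_5$. None of the six-vertex graphs $R_9$--$R_{12}$ is a tree.
\end{itemize}

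Both omissions are integer-$\rho$ boundary cases, exactly the kind a floating-point comparison can drop. The other equality cases, $P_5=R_6$, $K_{1,1,3}=R_5$ and $S_9^e=R_{15}$, were kept. This is precisely the danger you identified yourself.

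So the honest output of your procedure is a corrected proposition. Its list must contain at least $C_4$ and this double star in addition to $R_1$--$R_{15}$. Your own filter shows the omission does not propagate to larger orders. Equality in Theorem~\ref{inducedsubgraph} forces $G,G'$ to be both stars or both complete. Hence any connected graph containing $C_4$ or this double star as a proper induced subgraph has $\rho^2<2m-n$.

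A minor point: the existence of a vertex whose deletion keeps $G$ connected is elementary (take a leaf of a spanning tree). You do not need Lemma~\ref{Guonotexist} for it.
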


\vskip0.2in

\setlength{\unitlength}{0.5pt}
\begin{center}
\begin{picture}(760,540)
\put(0,480){\circle*{6}}
\put(40,520){\circle*{6}}
\qbezier(0,480)(20,500)(40,520)
\put(80,480){\circle*{6}}
\qbezier(40,520)(60,500)(80,480)
\put(40,440){\circle*{6}}
\qbezier(0,480)(20,460)(40,440)
\qbezier(40,440)(60,460)(80,480)
\qbezier(40,520)(40,480)(40,440)
\put(320,480){\circle*{6}}
\put(400,480){\circle*{6}}
\qbezier(320,480)(360,480)(400,480)
\put(320,440){\circle*{6}}
\qbezier(320,440)(320,460)(320,480)
\put(400,440){\circle*{6}}
\qbezier(400,480)(400,460)(400,440)
\put(360,520){\circle*{6}}
\qbezier(360,520)(340,500)(320,480)
\qbezier(360,520)(380,500)(400,480)
\put(240,520){\circle*{6}}
\put(160,520){\circle*{6}}
\qbezier(240,520)(200,520)(160,520)
\put(200,480){\circle*{6}}
\qbezier(240,520)(220,500)(200,480)
\qbezier(160,520)(180,500)(200,480)
\put(160,440){\circle*{6}}
\qbezier(200,480)(180,460)(160,440)
\put(240,440){\circle*{6}}
\qbezier(200,480)(220,460)(240,440)
\put(480,480){\circle*{6}}
\put(520,520){\circle*{6}}
\qbezier(480,480)(500,500)(520,520)
\put(520,440){\circle*{6}}
\qbezier(480,480)(500,460)(520,440)
\qbezier(520,440)(520,480)(520,520)
\put(560,480){\circle*{6}}
\qbezier(520,440)(540,460)(560,480)
\qbezier(520,520)(540,500)(560,480)
\put(560,540){\circle*{6}}
\qbezier(520,520)(540,530)(560,540)
\put(680,520){\circle*{6}}
\put(680,440){\circle*{6}}
\qbezier(680,520)(680,480)(680,440)
\put(720,480){\circle*{6}}
\qbezier(680,520)(700,500)(720,480)
\qbezier(720,480)(700,460)(680,440)
\put(760,480){\circle*{6}}
\qbezier(680,520)(720,500)(760,480)
\qbezier(760,480)(720,460)(680,440)
\put(640,480){\circle*{6}}
\qbezier(680,520)(660,500)(640,480)
\qbezier(640,480)(660,460)(680,440)
\put(40,360){\circle*{6}}
\put(0,320){\circle*{6}}
\qbezier(40,360)(20,340)(0,320)
\put(40,320){\circle*{6}}
\qbezier(0,320)(20,320)(40,320)
\put(80,320){\circle*{6}}
\qbezier(40,320)(60,320)(80,320)
\put(40,280){\circle*{6}}
\qbezier(80,320)(60,300)(40,280)
\put(200,360){\circle*{6}}
\put(160,320){\circle*{6}}
\qbezier(200,360)(180,340)(160,320)
\put(240,320){\circle*{6}}
\qbezier(200,360)(220,340)(240,320)
\put(200,280){\circle*{6}}
\qbezier(160,320)(180,300)(200,280)
\qbezier(240,320)(220,300)(200,280)
\qbezier(200,360)(200,320)(200,280)
\qbezier(160,320)(200,320)(240,320)
\put(280,320){\circle*{6}}
\qbezier(200,360)(240,340)(280,320)
\qbezier(200,280)(240,300)(280,320)
\put(360,360){\circle*{6}}
\put(320,320){\circle*{6}}
\qbezier(360,360)(340,340)(320,320)
\put(400,320){\circle*{6}}
\qbezier(360,360)(380,340)(400,320)
\put(360,280){\circle*{6}}
\qbezier(320,320)(340,300)(360,280)
\qbezier(400,320)(380,300)(360,280)
\qbezier(360,360)(360,320)(360,280)
\qbezier(320,320)(360,320)(400,320)
\put(440,320){\circle*{6}}
\qbezier(360,360)(400,340)(440,320)
\qbezier(400,320)(420,320)(440,320)
\qbezier(440,320)(400,300)(360,280)
\put(520,360){\circle*{6}}
\put(480,320){\circle*{6}}
\qbezier(520,360)(500,340)(480,320)
\put(520,320){\circle*{6}}
\qbezier(520,360)(520,340)(520,320)
\qbezier(480,320)(500,320)(520,320)
\put(560,340){\circle*{6}}
\qbezier(520,320)(540,330)(560,340)
\put(552,288){\circle*{6}}
\qbezier(520,320)(536,304)(552,288)
\put(500,280){\circle*{6}}
\qbezier(520,320)(510,300)(500,280)
\put(680,360){\circle*{6}}
\put(640,320){\circle*{6}}
\qbezier(680,360)(660,340)(640,320)
\put(720,320){\circle*{6}}
\qbezier(680,360)(700,340)(720,320)
\put(680,280){\circle*{6}}
\qbezier(640,320)(660,300)(680,280)
\qbezier(720,320)(700,300)(680,280)
\qbezier(680,360)(680,320)(680,280)
\qbezier(640,320)(680,320)(720,320)
\put(751,351){\circle*{6}}
\qbezier(720,320)(735,336)(751,351)
\put(751,289){\circle*{6}}
\qbezier(720,320)(735,305)(751,289)
\put(40,160){\circle*{6}}
\put(0,120){\circle*{6}}
\qbezier(40,160)(20,140)(0,120)
\put(80,120){\circle*{6}}
\qbezier(40,160)(60,140)(80,120)
\qbezier(0,120)(40,120)(80,120)
\put(40,80){\circle*{6}}
\qbezier(40,160)(40,120)(40,80)
\qbezier(0,120)(20,100)(40,80)
\qbezier(80,120)(60,100)(40,80)
\put(80,180){\circle*{6}}
\qbezier(40,160)(60,170)(80,180)
\put(80,60){\circle*{6}}
\qbezier(40,80)(60,70)(80,60)
\put(200,160){\circle*{6}}
\put(240,160){\circle*{6}}
\qbezier(200,160)(220,160)(240,160)
\put(280,120){\circle*{6}}
\qbezier(240,160)(260,140)(280,120)
\put(160,120){\circle*{6}}
\qbezier(200,160)(180,140)(160,120)
\put(200,80){\circle*{6}}
\qbezier(160,120)(180,100)(200,80)
\put(240,80){\circle*{4}}
\qbezier(200,80)(220,80)(240,80)
\qbezier(240,80)(260,100)(280,120)
\qbezier(200,160)(220,120)(240,80)
\qbezier(200,160)(200,120)(200,80)
\qbezier(200,160)(240,140)(280,120)
\qbezier(240,160)(200,140)(160,120)
\qbezier(240,160)(220,120)(200,80)
\qbezier(240,160)(240,120)(240,80)
\qbezier(280,120)(240,100)(200,80)
\qbezier(240,80)(200,100)(160,120)
\put(410,160){\circle*{6}}
\put(338,160){\circle*{6}}
\qbezier(410,160)(374,160)(338,160)
\put(374,124){\circle*{6}}
\qbezier(410,160)(392,142)(374,124)
\qbezier(338,160)(356,142)(374,124)
\put(415,100){\circle*{6}}
\qbezier(374,124)(394,112)(415,100)
\put(392,72){\circle*{6}}
\qbezier(374,124)(383,98)(392,72)
\put(355,72){\circle*{6}}
\qbezier(374,124)(364,98)(355,72)
\put(332,100){\circle*{6}}
\qbezier(374,124)(353,112)(332,100)
\put(554,160){\circle*{6}}
\put(486,160){\circle*{6}}
\qbezier(554,160)(520,160)(486,160)
\put(520,126){\circle*{6}}
\qbezier(554,160)(537,143)(520,126)
\qbezier(486,160)(503,143)(520,126)
\put(567,109){\circle*{6}}
\qbezier(520,126)(543,118)(567,109)
\put(550,85){\circle*{6}}
\qbezier(520,126)(535,106)(550,85)
\put(520,74){\circle*{6}}
\qbezier(520,126)(520,100)(520,74)
\put(491,85){\circle*{6}}
\qbezier(520,126)(505,106)(491,85)
\put(473,109){\circle*{6}}
\qbezier(520,126)(496,118)(473,109)
\put(646,160){\circle*{6}}
\put(714,160){\circle*{6}}
\qbezier(646,160)(680,160)(714,160)
\put(680,126){\circle*{6}}
\qbezier(714,160)(697,143)(680,126)
\qbezier(646,160)(663,143)(680,126)
\put(720,94){\circle*{6}}
\qbezier(680,126)(700,110)(720,94)
\put(699,73){\circle*{6}}
\qbezier(680,126)(689,100)(699,73)
\put(662,73){\circle*{6}}
\qbezier(680,126)(671,100)(662,73)
\put(640,94){\circle*{6}}
\qbezier(680,126)(660,110)(640,94)
\put(634,126){\circle*{6}}
\qbezier(680,126)(657,126)(634,126)
\put(726,126){\circle*{6}}
\qbezier(680,126)(703,126)(726,126)
\put(27,406){$R_1$}
\put(187,406){$R_2$}
\put(347,406){$R_3$}
\put(508,406){$R_4$}
\put(668,406){$R_5$}
\put(27,246){$R_6$}
\put(187,246){$R_7$}
\put(347,246){$R_8$}
\put(508,246){$R_9$}
\put(667,246){$R_{10}$}
\put(26,31){$R_{11}$}
\put(201,31){$R_{12}$}
\put(357,31){$R_{13}$}
\put(508,31){$R_{14}$}
\put(665,31){$R_{15}$}
\put(50,-20){Figure 2: Small graphs with spectral radius at least $\sqrt{2m-n}$}
\end{picture}
\end{center}

\vskip0.5in

\section{Sharp upper bound of the second largest eigenvalue}\label{section:4}

We need to introduce the next critical tool, which gives the effect on the second largest eigenvalue by suitably deleting edge.

\begin{lemma}\cite{WG26+}\label{lem:remove edge between N+ and N-}
Let $\textbf{x}$ be an eigenvector of a graph $G$ corresponding to $\lambda_2(G)$, and $uv$ be an edge of $G$. If $\textbf{x}_u\textbf{x}_v<0$, then $\lambda_2(G-uv)> \lambda_2(G)$.
\end{lemma}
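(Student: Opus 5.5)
The plan is a Courant--Fischer argument for $A'=A(G-uv)$ on a two-dimensional test space built from the positive and negative parts of $\textbf{x}$. Write $A=A(G)$ and $\lambda_2=\lambda_2(G)$. Since
$$\lambda_2(G-uv)=\max_{\dim W=2}\ \min_{\textbf{0}\neq \textbf{z}\in W}\frac{\textbf{z}^TA'\textbf{z}}{\textbf{z}^T\textbf{z}},$$
it suffices to find a $2$-dimensional subspace $W$ on which the quadratic form $Q'(\textbf{z})=\textbf{z}^T(A'-\lambda_2 I)\textbf{z}$ is positive definite. Then the minimum over the unit sphere of $W$ is attained and strictly positive, which gives the strict inequality $\lambda_2(G-uv)>\lambda_2(G)$.

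Keep the sets $N_+$ and $N_-$ as in the proof of Theorem \ref{thm:2th<d2}. Let $\textbf{x}^+$ agree with $\textbf{x}$ on $N_+$ and vanish elsewhere, and let $\textbf{x}^-$ agree with $\textbf{x}$ on $N_-$ and vanish elsewhere. Both are nonzero: $u,v$ have opposite signs, so one lies in $N_+$ and the other in $N_-$. They have disjoint supports, so they are linearly independent, and we take $W=\mathrm{span}\{\textbf{x}^+,\textbf{x}^-\}$.

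The key computations are as follows. Set
$$c=(\textbf{x}^+)^TA\,\textbf{x}^-=\sum_{ab\in E(N_+,N_-)}\textbf{x}_a\textbf{x}_b .$$
Then $c\le 0$, and $c<0$ because the edge $uv$ contributes $\textbf{x}_u\textbf{x}_v<0$. Pairing $A\textbf{x}=\lambda_2\textbf{x}$ with $\textbf{x}^+$ gives
$$(\textbf{x}^+)^TA\textbf{x}^+=\lambda_2\|\textbf{x}^+\|^2-c,$$
and pairing it with $\textbf{x}^-$ gives
$$(\textbf{x}^-)^TA\textbf{x}^-=\lambda_2\|\textbf{x}^-\|^2-c.$$
Entries on $N_0$ do not interfere, since they are zero in $\textbf{x}$. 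Deleting $uv$ does not change either diagonal quantity: $uv$ joins $N_+$ to $N_-$, so it enters neither $(\textbf{x}^{\pm})^TA\textbf{x}^{\pm}$. It only changes the cross term, which becomes
$$c'=(\textbf{x}^+)^TA'\textbf{x}^-=c-\textbf{x}_u\textbf{x}_v .$$
Since every term of $c$ is nonpositive, $c\le c'\le 0$ and $|c'|=|c|-|\textbf{x}_u\textbf{x}_v|$.

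Hence, for all real $\alpha,\beta$,
$$Q'(\alpha\textbf{x}^++\beta\textbf{x}^-)=|c|(\alpha^2+\beta^2)+2\alpha\beta c'\ \ge\ \big(|c|-|c'|\big)(\alpha^2+\beta^2)=|\textbf{x}_u\textbf{x}_v|(\alpha^2+\beta^2),$$
which is positive for $(\alpha,\beta)\ne(0,0)$. Dividing by $\|\alpha\textbf{x}^++\beta\textbf{x}^-\|^2$ bounds the Rayleigh quotient of $A'$ on $W$ strictly above $\lambda_2$, and Courant--Fischer then gives $\lambda_2(G-uv)>\lambda_2(G)$.

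The only delicate point is the sign bookkeeping for the cross term: each crossing edge is counted once in $c$, and removing $uv$ strictly shrinks $|c|$ while leaving the diagonal terms unchanged. A first attempt with the Perron vector and $\textbf{x}$ as the test space fails, because the cross terms there have no controlled sign. Splitting $\textbf{x}$ by sign avoids this, and no connectivity assumption on $G$ is needed.
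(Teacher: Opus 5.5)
Your proof is correct and complete. The paper gives no proof of this lemma to compare against: it quotes the result from \cite{WG26+} without argument. Your Courant--Fischer argument on $W=\mathrm{span}\{\textbf{x}^+,\textbf{x}^-\}$ therefore stands as a self-contained justification, and each step holds.

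\begin{itemize}
\item Pairing $A\textbf{x}=\lambda_2\textbf{x}$ with $\textbf{x}^{\pm}$, and using $\textbf{x}=\textbf{x}^++\textbf{x}^-$ (since $\textbf{x}$ vanishes on $N_0$), gives $(\textbf{x}^{\pm})^TA\textbf{x}^{\pm}=\lambda_2\|\textbf{x}^{\pm}\|^2-c$.
\item Deleting $uv$ leaves both of these unchanged, because each of $\textbf{x}^+$ and $\textbf{x}^-$ vanishes at one endpoint.
\item The cross term becomes $c'=c-\textbf{x}_u\textbf{x}_v$ with $c\le c'\le 0$.
\item The estimate $|2\alpha\beta c'|\le |c'|(\alpha^2+\beta^2)$ then gives $Q'\ge |\textbf{x}_u\textbf{x}_v|(\alpha^2+\beta^2)$.
\end{itemize}

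Your route actually yields more than the statement, which is worth recording.

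\textbf{A quantitative gap.} Since $\|\alpha\textbf{x}^++\beta\textbf{x}^-\|^2=\alpha^2\|\textbf{x}^+\|^2+\beta^2\|\textbf{x}^-\|^2\le \|\textbf{x}\|^2(\alpha^2+\beta^2)$, normalizing $\|\textbf{x}\|=1$ gives $\lambda_2(G-uv)\ge \lambda_2(G)+|\textbf{x}_u\textbf{x}_v|$.

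\textbf{Greater generality.} You used only the eigen-equation, so the same argument shows $\lambda_2(G-uv)>\lambda$ for any eigenpair $(\lambda,\textbf{x})$ of $G$ with $\textbf{x}_u\textbf{x}_v<0$. It needs no connectivity hypothesis on $G$ or on $G-uv$. In the proof of Theorem \ref{thm:m.n-1} the lemma is applied only to non-cut edges, so this generality is not needed there, but it costs nothing.
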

If $G_1$ and $G_2$ are two disjoint graphs, $u$ is a vertex of $G_1$, and $v$ is a vertex of $G_2$, we denote by $G_1uvG_2$ the graph from $G_1\cup G_2$ by connecting the edge $uv$.
Let $\mathcal{J}(G_1,G_2)$ be a family of graphs obtained from $G_1$ and $G_2$ by adding a new vertex $u$ and connecting $u$ to a vertex of each $G_i$ for $i=1,2$.

The following lemma is a computational result, so we put the proof into Appendix \ref{appendix A}.
\begin{lemma}\label{lem:four special graphs-2}
Let $n_1\ge 10$, $n_2\ge 10$ be two integers, $H_1\in \mathcal{H}_{n_1}$ and $H_2\in \mathcal{H}_{n_2}$. 
If $G$ is the graph $H_1uvH_2$ with $n=n_1+n_2$ vertices and $m$ edges, such that $\textbf{x}\big|_{V(H_1)}>0$ and $\textbf{x}\big|_{V(H_2)}<0$, where $\textbf{x}$ is an eigenvector of a graph $G$ corresponding to $\lambda_2(G)$, then
    $$\lambda_2(G)<\sqrt{m-\frac{n}{2}-\frac{1}{2}},$$
except that $G$ is $S_{\frac{n}{2}}uvS_{\frac{n}{2}}$ such that $u$ and $v$ are pendent vertices of $S_{\frac{n}{2}}$.
\end{lemma}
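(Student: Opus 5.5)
The plan is to first bound $\lambda_2(G)$ by the smaller of the two spectral radii $\rho(H_1)$ and $\rho(H_2)$. Then we compare these radii with $\sqrt{m-\frac n2-\frac12}$ using explicit formulas, since each $H_i$ comes from a four-element family.

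\textbf{Reduction.} Since $\textbf{x}\big|_{V(H_1)}>0$, the only neighbour of $V(H_1)$ outside $H_1$ is $v$, and $\textbf{x}_v<0$. So Lemma \ref{lem:connected by negative edges} gives $\lambda_2(G)<\rho(H_1)$, and symmetrically $\lambda_2(G)<\rho(H_2)$. Write $m_i=m(H_i)$, so that $m=m_1+m_2+1$ and
$$m-\frac n2-\frac12=\frac{(2m_1-n_1)+(2m_2-n_2)+1}{2}.$$
It therefore suffices to show
$$\min\{\rho^2(H_1),\rho^2(H_2)\}\le \frac{(2m_1-n_1)+(2m_2-n_2)+1}{2}.$$
For this we use $\min\{a,b\}\le \frac{a+b}{2}$ together with per-graph estimates $\rho^2(H)\le 2m(H)-n(H)+\varepsilon_H$. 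Hong's bound (Lemma \ref{lem:Hong's upper bound-1}) gives $\varepsilon_H=1$ with equality for $S$ and $K$, which is too weak by itself, so we need the actual values:
\begin{itemize}
\item $\rho^2(S_k)=k-1=2m-k+1$;
\item $\rho^2(K_k)=(k-1)^2=2m-k+1$;
\item $\rho^2(S^1_{k-1})$ and $\rho^2(K^1_{k-1})$ lie strictly between $2m-k$ and $2m-k+1$.
\end{itemize}
For the last two, we compute characteristic polynomials via quotient matrices. For $S^1_{k-1}$ one gets $\lambda^4-(k-1)\lambda^2+(k-3)=0$. The larger root satisfies $\rho^2=k-2+\delta$ with $\delta\approx \frac{1}{k}$ small, and here $2m-k=k-2$. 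A similar small excess $\delta=O(1/k)$ appears for $K^1_{k-1}$, using a $3\times3$ quotient.

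\textbf{Case analysis.} The crude average gives
$$\frac{\rho^2(H_1)+\rho^2(H_2)}{2}\le \frac{(2m_1-n_1)+(2m_2-n_2)+\varepsilon_1+\varepsilon_2}{2},$$
which suffices whenever $\varepsilon_1+\varepsilon_2\le1$. This covers the cases where both $H_i$ are of type $S^1$ or $K^1$, since then $\varepsilon_i<1/2$ for $n_i\ge10$. When some $\varepsilon_i=1$, the averaging loses too much, so we instead use the minimum directly together with the size disparity.

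Say $\rho^2(H_1)\le\rho^2(H_2)$. Then we need
$$\rho^2(H_1)\le \frac{\rho^2(H_1)+(2m_2-n_2)+1-\varepsilon_1}{2},\quad\text{i.e.}\quad \rho^2(H_1)+\varepsilon_1-1\le 2m_2-n_2.$$
If $\rho^2(H_2)-\varepsilon_2\ge\rho^2(H_1)+\varepsilon_1-1$, this holds. Otherwise $\rho(H_1)$ and $\rho(H_2)$ are nearly equal, and the two families have very different spectral radii for the same order (stars have $\sqrt{k}$, cliques have $k$). So near-equality forces either the same type or a few isolated mixed pairs, and those we exclude by direct inequalities for $n_i\ge10$.

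\textbf{Main obstacle.} The hard part is the equal-type, equal-size situation. For example, with $H_1=H_2=S_k$ we have $\rho^2=k-1$ and the target is $\frac{(k-2)+(k-2)+1}{2}+\frac12$ in the correct normalisation. Here Lemma \ref{lem:connected by negative edges} gives only $\lambda_2<\rho(H_i)$, which lands exactly at the threshold. We must then pin down where $u,v$ attach.
\begin{itemize}
\item If $u$ is a centre of a star or a vertex of $K$, we use the eigen-equation at $u$: $\lambda_2\textbf{x}_u=\sum_{w\in N_{H_1}(u)}\textbf{x}_w+\textbf{x}_v$ with $\textbf{x}_v<0$. A Rayleigh-quotient or quotient-matrix computation of $\lambda_2$ of the whole $G$ then shows a definite deficit below $\rho(H_1)$, enough to get a strict inequality.
\item Only when $u,v$ are pendant vertices of two equal stars does the deficit vanish asymptotically to exactly the threshold. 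This is the stated exception $S_{\frac n2}uvS_{\frac n2}$.
\end{itemize}
I expect this step to require explicit characteristic polynomials of $G$ via equitable partitions (at most six cells), followed by comparing roots with $\sqrt{m-\frac n2-\frac12}$ for $n_i\ge10$. This is the computational heart of the appendix.
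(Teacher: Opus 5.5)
Your reduction $\lambda_2(G)<\min\{\rho(H_1),\rho(H_2)\}$ is sound. So is the averaging step, which is neater than the paper's treatment. Writing $\rho^2(H_i)=2m_i-n_i+\varepsilon_i$, one does have $\varepsilon_i<\frac12$ for $S^1_{n_i-1}$ and $K^1_{n_i-1}$ when $n_i\ge10$, so all pairs from $\{S^1,K^1\}$ are settled at once.

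The gap is your claim that near-equality leaves only equal types ``or a few isolated mixed pairs'' that direct inequalities can exclude. Carried out exactly, your window argument says the reduction fails precisely when $\varepsilon_1=\varepsilon_2=1$ and $2m_1-n_1=2m_2-n_2$, i.e.\ $H_1,H_2\in\{S,K\}$ with $\rho(H_1)=\rho(H_2)$. Because $n_1$ and $n_2$ are independent, this includes $H_1=S_{r^2+1}$, $H_2=K_{r+1}$ for every $r$. That is an infinite family, not a few isolated pairs. There $\rho^2(H_i)=r^2>r^2-\frac12=m-\frac n2-\frac12$, so no inequality on $\rho(H_i)$ can dispose of it, and $\lambda_2(G)$ must be computed. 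Your heuristic that an endpoint in a clique gives a definite deficit fails here. To first order the deficit $r-\lambda_2$ is the product of the normalised Perron entries at $u$ and $v$. When $u$ is a pendant vertex of the star this product is $\frac{1}{r\sqrt{2(r+1)}}$, smaller than the roughly $\frac{1}{4r}$ needed.

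In fact the statement itself fails there. Take $u$ pendant; Schwenk's formula gives $\phi_G(x)=x^{r^2-2}(x+1)^{r-1}g(x)$, where
\[
g(x)=x(x+1)(x^2-r^2)(x-r)-(x^2-r^2+1)(x-r+1).
\]
At $x_0=\sqrt{r^2-\frac12}$ one computes $g(x_0)=\frac{x_0^2+x_0+1}{4(x_0+r)}-\frac12>0$ for $r\ge3$, while $g(r)=-1$. So $G$ has an eigenvalue in $(x_0,r)$, and since $\lambda_1(G)>r$ this gives $\lambda_2(G)>\sqrt{m-\frac n2-\frac12}$.

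The lemma's sign hypothesis holds for this eigenvalue. Normalise $\textbf{x}_u=1$. The eigen-equations give $\textbf{x}_v=\frac{\lambda-r+1}{(\lambda-r)(\lambda+1)}<0$ and the other clique entries equal to $\textbf{x}_v/(\lambda-r+1)<0$. All star entries are positive. For $r=9$ ($n=92$, $m=127$) one gets $\lambda_2(G)\approx 8.980>\sqrt{80.5}\approx 8.972$.

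So no completion of your plan can prove the lemma as stated. The graphs $S_{r^2+1}uvK_{r+1}$ with $u$ pendant ($r\ge 9$) must be added to the exceptions, and they also contradict Theorem~\ref{thm:m.n-1}. The paper's appendix misses this case through an arithmetic slip in Type~II. The condition $n_2-1>\sqrt{(n_2^2+n_1-2n_2-1)/2}$ yields $n_1<(n_2-1)^2+2$, not $n_1<(n_2-1)^2$, so $n_1=(n_2-1)^2+1$ survives.

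A minor point: for $H_1=H_2=S_k$, the value $\rho^2(H_i)=k-1$ does not land at the threshold $k-\frac32$ but exceeds it by $\frac12$.
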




\begin{theorem}\label{thm:m.n-1}
    Let $G$ be a connected graph of order $n$ and size $m$, and $G$ is not isomorphic to $S_{\frac{n}{2}}uvS_{\frac{n}{2}}$, then
    $$\lambda_2(G)\le \sqrt{m-\frac{n}{2}-\frac{1}{2}},$$
    and the equality holds if and only if $G\in \mathcal{J}(K_{\frac{n-1}{2}},K_{\frac{n-1}{2}})\cup\mathcal{J}(S_{\frac{n-1}{2}},S_{\frac{n-1}{2}})\cup\mathcal{J}(K_{r+1},S_{r^2+1})$, where $r$ is an integer and $r^2+r+3=n$.
\end{theorem}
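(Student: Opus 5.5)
We fix a unit eigenvector $\textbf{x}$ for $\lambda_2(G)$ and split $V(G)=N_+\cup N_-\cup N_0$ by the sign of the entries, as in the proof of Theorem \ref{thm:2th<d2}. We may assume $\lambda_2(G)>0$, since otherwise there is nothing to prove. Let $G_+$ and $G_-$ be the subgraphs induced by $N_+$ and $N_-$, with orders $n_\pm$ and sizes $m_\pm$. The basic idea is that Lemma \ref{lem:connected by negative edges} gives $\lambda_2(G)\le\rho(H)$ for every component $H$ of $G_+$ and of $G_-$. Choosing one component $H_+$ of $G_+$ and one component $H_-$ of $G_-$, we obtain
$$\lambda_2^2(G)\le \min\{\rho^2(H_+),\rho^2(H_-)\}\le \tfrac12\big(\rho^2(H_+)+\rho^2(H_-)\big).$$

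Each $\rho^2(H_\pm)$ is then bounded in terms of the order and size of $H_\pm$. If $H_\pm\notin\mathcal{H}$ and has at least $10$ vertices, Theorem \ref{thm:refined upper bound-1} gives $\rho^2<2m(H_\pm)-n(H_\pm)$. Small orders are handled by Proposition \ref{prop:rho-except graph}, and exceptional members of $\mathcal{H}$ by Lemma \ref{lem:Hong's upper bound-1}, which gives $\rho^2\le 2m-n+1$. Summing the two bounds yields
$$2\lambda_2^2\le 2(m_++m_-)-(n_++n_-)+\varepsilon,$$
where $\varepsilon\in\{0,1,2\}$ counts the components in $\mathcal{H}$.

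Next we relate $(m_+,n_+,m_-,n_-)$ to $(m,n)$. Connectivity of $G$ forces edges outside $G_+\cup G_-$: either edges of $E(N_+,N_-)$, or vertices of $N_0$ together with their incident edges. The vertex count gives $n=n_++n_-+|N_0|$. For the edges, each vertex of $N_0$ contributes at least one edge toward connecting the pieces, and there is at least one connecting edge if $N_0=\varnothing$. One gets $m-(m_++m_-)\ge |N_0|+1$ in the case $N_0=\varnothing$ or in the case where $N_0$ vertices are attached to both sides, and more generally $2(m-m_+-m_-)-|N_0|\ge 1+$(extra). Substituting, we obtain
$$2\lambda_2^2\le 2m-n-1+\big(\varepsilon-\text{slack}\big).$$

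The slack is positive unless $G_\pm$ are connected, there is a single connecting structure, and $\varepsilon$ is accounted for. When $\varepsilon\ge1$ we must show that the slack absorbs it. This case is handled by Lemma \ref{lem:four special graphs-2} when $N_0=\varnothing$ and both sides lie in $\mathcal{H}$; it produces exactly the excluded graph $S_{n/2}uvS_{n/2}$. When $N_0\ne\varnothing$ with one vertex joined to one vertex on each side, the configuration is the $\mathcal{J}$ structure. If $x_u=0$ there, Lemma \ref{lem:null cut-set} shows that $\lambda_2$ is a common eigenvalue of the two sides, hence equal to $\rho$ of each side.

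Equality analysis: equality forces every inequality to be tight. We need $\rho(H_+)=\rho(H_-)=\lambda_2$. The case $\varepsilon$ with Hong equality requires each side to be $K$ or $S$ with $\rho^2=2m-n+1$, and the slack must equal exactly $1$, i.e.\ exactly one $N_0$ vertex of degree $2$. So $G\in\mathcal{J}(H_1,H_2)$ with $H_i\in\{K_k,S_k\}$ and $\rho(H_1)=\rho(H_2)$. This gives $K_a,K_a$; $S_a,S_a$; or $K_{r+1},S_{r^2+1}$ from $r=\sqrt{r^2}$, and the order count gives $r^2+r+3=n$. Conversely, in these graphs the vector that is the Perron vector on one side, the negative Perron vector on the other, and $0$ at the joining vertex is an eigenvector, provided the joining vertices carry entries of equal magnitude. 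Arranging this balance by choosing the attachment vertices (or a suitable combination), and verifying that it indeed gives $\lambda_2$, is a check I expect to need care.

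The main obstacle is the bookkeeping for edges incident to $N_0$ and for multiple components. Edges inside $N_0$, and vertices of $N_0$ adjacent to only one side, must be shown to give strictly positive slack. Components of $G_\pm$ that are small, with $n\le9$ and lying among $R_1$--$R_{15}$, also need case checks. There I would first try to pass, via Lemma \ref{lem:remove edge between N+ and N-}, to a graph with fewer cross edges, since deleting such an edge increases $\lambda_2$ while decreasing $m$.
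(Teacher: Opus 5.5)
Your route differs from the paper's and, once the bookkeeping is filled in, it works. The paper first replaces $G$ by a maximizer of $\lambda_2$ among connected graphs of order $n$ and size at most $m$. Lemma~\ref{lem:remove edge between N+ and N-} then makes every $N_+$--$N_-$ edge a bridge. After that the paper sums Hong's bound over \emph{all} components: those of $G-E(N_+,N_-)$ in Case~1, and those of $G-N_0$ in Case~2, where Lemma~\ref{lem:null cut-set} gives $\lambda_2=\rho(G_i)$ for every $i$. This forces two pieces and one connector. You instead average over one component $H_\pm$ of each sign class and count edges globally. This needs no extremal reduction, so equality is analysed for the given $G$ itself. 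It also needs neither Lemma~\ref{lem:null cut-set} nor edge deletion: equality between the minimum and the average already yields $\rho(H_+)=\rho(H_-)=\lambda_2$. Both routes end in the same core case.

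The bookkeeping you call the main obstacle is short. Contract each of the $a$ components of $G_+$ and the $b$ components of $G_-$. This gives a connected multigraph on $a+b+|N_0|$ vertices whose edges are exactly the $m-m_+-m_-$ edges outside $G_+\cup G_-$, so $m-m_+-m_-\ge a+b+|N_0|-1$. Since $\lambda_2>0$, the eigen-equation forbids isolated vertices in $G_\pm$, so every unchosen component $A$ has $2m(A)-n(A)\ge 0$. Hong's bound alone then gives
\[
2\lambda_2^2\le\rho^2(H_+)+\rho^2(H_-)\le 2m_+-n_++2m_--n_-+2=2m-n+2+|N_0|-2(m-m_+-m_-).
\]
Hence $2\lambda_2^2\le 2m-n-|N_0|$ when $N_0\neq\varnothing$, and $2\lambda_2^2\le 2m-n-2$ when $N_0=\varnothing$ and $e(N_+,N_-)\ge 2$. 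Equality forces $|N_0|=1$, $a=b=1$, exactly two outside edges, and $H_\pm$ stars or cliques of equal spectral radius. The eigen-equation $\sum_{u\sim w}\mathbf{x}_u=0$ at the zero vertex $w$ then puts its two neighbours on opposite sides, so $G\in\mathcal{J}(H_+,H_-)$. The same equation also disposes of your feared $N_0$-vertices adjacent to only one side: they cannot exist.

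What genuinely remains is $N_0=\varnothing$ with a single cross edge, $G=H_1uvH_2$ (write $n_i=n(H_i)$, $m_i=m(H_i)$). Here the count only gives $2\lambda_2^2\le 2m-n$, and the refined bounds leave both sides in $\mathcal{H}\cup\{R_1,\dots,R_{15}\}$. Your plan to remove cross edges via Lemma~\ref{lem:remove edge between N+ and N-} does nothing here, since the only cross edge is a bridge. For $n_1,n_2\ge 10$, Lemma~\ref{lem:four special graphs-2} applies. If $n_1\le 9$, you need the finiteness bound the paper uses. From $\rho^2(H_1)>\lambda_2^2\ge m-\frac n2-\frac12$ and $m_2\ge n_2-1$ one gets $n_2\le 2\rho^2(H_1)-(2m_1-n_1)$. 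This leaves finitely many graphs, which the paper checks by computer; without it your ``case checks'' range over infinitely many graphs.

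Finally, the converse needs no balancing. For $G\in\mathcal{J}(G_1,G_2)$ with $\rho(G_1)=\rho(G_2)=t$, the graph $G-u=G_1\cup G_2$ has $\lambda_1=\lambda_2=t$. Cauchy interlacing then gives $\lambda_2(G)=t$ for every choice of attachment vertices. It remains only to verify $t^2=m-\frac n2-\frac12$ in the three families.
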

\begin{proof}
Let $G$ be a graph with the maximum second largest eigenvalue among connected graphs of order $n$ and size at most $m$.
Let $\textit{\textbf{x}}$ be an eigenvector of $G$ corresponding to $\lambda_2(G)$, and define
\begin{center}
    $N_+=\{v\in V(G):\textit{\textbf{x}}_v>0\}$,\ \ $N_-=\{v\in V(G):\textit{\textbf{x}}_v<0\}$\ \ and\ \ $N_0=V(G)\backslash(N_+\cup N_-)$.
\end{center}

We assert that each edge in $E(N_+,N_-)$ is a cut edge.
Suppose on the contrary that $uv\in E(N_+,N_-)$ is not a cut edge of $G$.
We know that $G-uv$ is a connected graph of order $n$ and size $m(G-uv)=m(G)-1<m$.
Then from Lemma \ref{lem:remove edge between N+ and N-} we have $\lambda_2(G-uv)>\lambda_2(G)$, contradicting the maximality of $\lambda_2(G)$.

Duo to Perron-Frobenius Theorem, an eigenvector with elements of the same sign is respecting to the spectral radius of $G$.
So we have $N_+\not=\varnothing$ and $N_-\not=\varnothing$.
We now distinguish two cases depending on $E(N_+,N_-)$.
\vskip0.1in
\noindent\textbf{Case 1}. $E(N_+,N_-)\not=\varnothing$.
\vskip0.1in
Let $H_1, H_2,\ldots, H_s$ be connected components of $G-E(N_+,N_-)$, then
$s=e(N_+,N_-)+1$.
From Lemma \ref{lem:connected by negative edges}, we have \footnote{If there exist vertices of $N_0$ in some $H_i$, then these vertices have neighbors in both $V(H_i)\cap N_-$ and $V(H_i)\cap N_+$.
So $V(H_i)\cap N_-\not=\varnothing$ and $V(H_i)\cap N_0\not=\varnothing$, Then $G[V(H_i)\cap N_+]$ is a proper subgraph of $H_i$. From Lemma \ref{lem:connected by negative edges}, we get $\lambda_2(G)<\rho(G[V(H_i)\cap N_+])<\rho(H_i)$.}
\begin{align}\label{eq:noempty-upper bound}
    \lambda_2(G)<\min\left\{\rho(H_i):\ i=1,2,\ldots,s\right\}.
\end{align}
For $i=1,2,\ldots,s$, from Lemma \ref{lem:Hong's upper bound-1}, we have
\begin{align}\label{eq:Hong's upper bound for i}
    \rho(H_i)\le\sqrt{2m_i-n_i+1},
\end{align}
where $m_i=m(H_i)$ and $n_i=n(H_i)$.

If there is a connected component $H_i$ such that $\rho(H_i)\le \sqrt{m-\frac{n}{2}-\frac{1}{2}}$, then $\lambda_2(G)<\sqrt{m-\frac{n}{2}-\frac{1}{2}}$ by Eq.\,(\ref{eq:noempty-upper bound}).
Next suppose that $\rho(H_i)> \sqrt{m-\frac{n}{2}-\frac{1}{2}}$ for all $i=1,2,\ldots,s$.
Then by Eq.\,(\ref{eq:Hong's upper bound for i}) we obtain
\begin{align*}
    \sum_{i=1}^s(2m_i-n_i+1)>sm-\frac{sn}{2}-\frac{s}{2}.
\end{align*}
This follows that
\begin{align*}
    \left(\frac{s}{2}-1\right)n=\frac{s}{2}n-\sum_{i=1}^s{n_i}
    &>sm-2\sum_{i=1}^s{m_i}-\frac{3}{2}s\\
    &\ge sm(G)-2\big(m(G)-e(N_+,N_-)\big)-\frac{3}{2}s\\
    &=sm(G)-2\big(m(G)-s+1\big)-\frac{3}{2}s\\
    &=(s-2)m(G)+\frac{1}{2}s-2\\
    &\ge (s-2)(n-1)+\frac{1}{2}s-2\\
    &=(s-2)n-\frac{1}{2}s,
\end{align*}
which implies $s<2+\frac{2}{n-1}$. So $s=2$ by mentioning that $s\ge 2$ and $n\ge 10$.
Then $e(N_+,N_-)=1$, and the graph $G$ can be viewed as $H_1$ and $H_2$ by embedding an edge between a vertex of $H_1$ and a vertex of $H_2$.

If $\rho(H_1)<\sqrt{2m_1-n_1}$ or $\rho(H_2)<\sqrt{2m_2-n_2}$, then, without loss of generality, let $\rho(H_1)<\sqrt{2m_1-n_1}$, and $\rho(H_2)\le\sqrt{2m_2-n_2+1}$ from Lemma \ref{lem:Hong's upper bound-1}.
So
$$2m_1-n_1+2m_2-n_2+1>\rho^2(H_1)+\rho^2(H_2)>2\left(m-\frac{n}{2}-\frac{1}{2}\right)\ge 2m(G)-n-1.$$
Since $n=n_1+n_2$ and $m(G)=m_1+m_2+1$, we compute that $2m_1-n_1+2m_2-n_2+1=2(m(G)-1)-n+1=2m(G)-n-1$, contradicting the previous inequality.

Thus, we have $\rho(H_1)\ge\sqrt{2m_1-n_1}$ and $\rho(H_2)\ge\sqrt{2m_2-n_2}$.
If $n_1\ge 10$ and $n_2\ge 10$, then $H_1\in\mathcal{H}_{n_1}$ and $H_2\in\mathcal{H}_{n_2}$ from Theorem \ref{thm:refined upper bound-1}.
Recall that $e(N_+,N_-)=1$. We know $N_0=\varnothing$. So $\textit{\textbf{x}}\big|_{V(H_1)}>0$ and $\textit{\textbf{x}}\big|_{V(H_2)}<0$.
From Lemma \ref{lem:four special graphs-2}, we get
$$\lambda_2(G)<\sqrt{m-\frac{n}{2}-\frac{1}{2}},$$
except the graph $S_{\frac{n}{2}}uvS_{\frac{n}{2}}$ such that $u$ and $v$ are pendent vertices of $S_{\frac{n}{2}}$.

If one of $n_1$ and $n_2$ is at most $9$, we let $n_1=\min\{n_1,n_2\}\le 9$ without loss of generality.
From Proposition \ref{prop:rho-except graph} and Theorem \ref{thm:refined upper bound-1}, we have $H_1\in \mathcal{H}_{n_1}\cup\{R_i:i=1,\ldots,15\}$, and $H_2\in \mathcal{H}_{n_1}\cup\{R_i:i=1,\ldots,15\}$ when $n_2\le 9$, or $H_2\in \mathcal{H}_{n_1}$ when $n_2\ge 10$.
Moreover, we can show $n_2$ is small when $n_2\ge 10$.
Since
\begin{align*}
    \rho^2(H_1)>m-\frac{n}{2}-\frac{1}{2}&\ge m(G)-\frac{n}{2}-\frac{1}{2}\\
               &=m_1+m_2+1-\frac{n_1+n_2}{2}-\frac{1}{2}\\
               &=m_1-\frac{n_1-1}{2}+\left(m_2-\frac{n_2}{2}\right)\\
               &\ge m_1-\frac{n_1+1}{2}+\frac{n_2}{2},
\end{align*}
where the last inequality dues to $m_2\ge n_2-1$,
we get $n_2<2\rho^2(H_1)-(2m_1-n_1-1)$, that is,
$$n_2\le2\rho^2(H_1)-(2m_1-n_1).$$
It is checked by AutoGraphix-III that all graphs $H_1uvH_2$ in this situation \footnote{For example, if $H_1=K_4$, then $n_2\le 2\rho^2(K_4)-\big(2m(K_4)-n(K_4)\big)=10$. Then $H_2\in \mathcal{H}_4\cup\mathcal{H}_5\cup\mathcal{H}_6\cup\mathcal{H}_7\cup\mathcal{H}_8\cup\mathcal{H}_9\cup\mathcal{H}_{10}\cup \{R_i:i=1,\ldots,15\}$. Actually, if we not reduce $m_2$ to $n_2-1$, we can get $2m_2-n_2\le 2\rho^2(H_1)-(2m_1-n_1)$. So many possible graphs for $H_2$ should not be discussed.
For the situation $H_1=K_4$, we not need to consider $R_8,R_{12}, K^1_5, K_6, K^1_6,K_{7},K^1_7,K_8,K^1_8,K_9,K^1_9,K_{10}$.}, except $S_{\frac{n}{2}}uvS_{\frac{n}{2}}$, have $\rho(H_1uvH_2)<\sqrt{m(G)-\frac{n}{2}-\frac{1}{2}}$.

\vskip0.1in
\noindent\textbf{Case 2}. $E(N_+,N_-)=\varnothing$.
\vskip0.1in
We know that $N_0\not=\varnothing$.
Let $G[N_0]=H_1\cup H_2\cup\cdots\cup H_p$ and $G-N_0=G_1\cup G_2\cup \cdots \cup G_q$, where $H_j$ ($1\le j\le p$) and $G_i$ ($1\le i\le q$) are connected components. 
Then
\begin{align}\label{eq:case2-=}
    \lambda_2(G)=\rho(G_1)=\cdots=\rho(G_q)
\end{align}
from Lemma \ref{lem:null cut-set}.
For $i=1,2,\ldots,q$, write $m(G_i)$ and $n(G_i)$ as $m_i$ and $n_i$, respectively.
From Lemma \ref{lem:Hong's upper bound-1}, we have
\begin{align}\label{eq:case2-hong}
    \rho(G_i)\le\sqrt{2m_i-n_i+1},
\end{align}
and the equality holds if and only if $G_i$ is a star or a complete graph.

When $\lambda_2(G)\le0$, we know $G$ is a complete multipartite graph.
Then $\lambda_2(G)<\sqrt{m-\frac{n}{2}-\frac{1}{2}}$ since $m\ge m(G)\ge n-1$.
We may let $\lambda_2(G)>0$.

\vskip0.2in
\begin{claim}\label{clm:sum}
    It has $$\sum_{i=1}^q(2m_i-n_i+1)\le q\left(m-\frac{n}{2}-\frac{1}{2}\right),$$ and the equality holds only if $p=1$, $q=2$ and $n(H_1)=1$.
\end{claim}
\noindent\textbf{Proof of Claim \ref{clm:sum}.}
To show this claim, it suffices to obtain only the equality holds, and $p=1$, $q=2$ and $n(H_1)=1$, provided $\sum_{i=1}^q(2m_i-n_i+1)\ge q\left(m-\frac{n}{2}-\frac{1}{2}\right)$.
Then
$$2\sum_{i=1}^q{m_i}-\sum_{i=1}^q{n_i}+q\ge q\left(m-\frac{n}{2}-\frac{1}{2}\right)\ge q\left(m(G)-\frac{n}{2}-\frac{1}{2}\right).$$
Since
$$m(G)=\sum_{i=1}^q{m_i}+\sum_{j=1}^p{m(H_i)}+\sum_{j=1}^p{e(H_i,G-N_0)}\ \ \textrm{and}\ \ n=\sum_{i=1}^q{n_i}+\sum_{j=1}^p{n(H_j)},$$
we have
\begin{align*}
    2m(G)-2\sum_{j=1}^p{m(H_i)}-2\sum_{j=1}^p{e(H_i,G-N_0)}-n+\sum_{j=1}^p{n(H_j)}+q\ge qm(G)-\frac{q}{2}n-\frac{q}{2}.
\end{align*}
It is clear that $e(H_i,G-N_0)\ge 2$ since $e(H_i,N_+)\ge 1$ and $e(H_i,N_-)\ge 1$, and $m(H_i)\ge n(H_i)-1$ since $H_i$ is connected.
So we can conclude that
\begin{align}
    \left(\frac{q}{2}-1\right)n
    &\ge (q-2)m(G)+2\sum_{j=1}^p{e(H_i,G-N_0)}-\frac{3}{2}q+\sum_{j=1}^p{\big(2m(H_i)-n(H_i)\big)}\nonumber\\
    &\ge (q-2)m(G)+4p-\frac{3}{2}q+\sum_{j=1}^p\big(n(H_i)-2\big)\nonumber\\
    &\ge (q-2)m(G)+3p-\frac{3}{2}q\label{eq:mid-inequ1}
\end{align}

Since $\lambda_2(G)>0$, from the eigen-equation $A(G)\textit{\textbf{x}}=\lambda_2(G)\textit{\textbf{x}}$, we have
$n_i\ge 2$, and so $n\ge 2q+1$.
Let $m(G)=n-1+c$, where $c$ is the cycle space dimension.
By Eq.\,(\ref{eq:mid-inequ1}), we get
\begin{align*}
    \left(\frac{q}{2}-1\right)n
    &\ge (q-2)(n-1+c)+3p-\frac{3}{2}q,
\end{align*}
which follows
\begin{align}\label{eq:mid-inequ2}
    0&\ge \left(\frac{q}{2}-1\right)n-(c-1)(q-2)+3p-\frac{3}{2}q\nonumber\\
     &\ge\left(\frac{q}{2}-1\right)(2q+1)-(c-1)(q-2)+3p-\frac{3}{2}q\nonumber\\
     &=q(q-4)+c(q-2)+3p+1.
\end{align}
Mention that $c\ge 0$ and $p\ge 1$. 
If $q\ge 4$, then $2c+3p+1\le 0$ by Eq.\,(\ref{eq:mid-inequ2}), a contradiction.
If $q=3$, then $c+3p-2\le 0$ by Eq.\,(\ref{eq:mid-inequ2}), a contradiction.
So $q=2$. Then $p\le 1$ by Eq.\,(\ref{eq:mid-inequ2}).
This means $p=1$ and all inequlities above turn into equalities.
Thus, $\sum_{i=1}^q(2m_i-n_i+1)= q\left(m-\frac{n}{2}-\frac{1}{2}\right)$, and $n(H_1)=1$ from Eq.\,(\ref{eq:mid-inequ1}). $\hfill\blacksquare$ 
\vskip0.2in

If $\rho(G_i)<\sqrt{m-\frac{n}{2}-\frac{1}{2}}$ for some $i\in\{1,\ldots,q\}$, then we have $\lambda_2(G)=\rho(G_i)<\sqrt{m-\frac{n}{2}-\frac{1}{2}}$ by Eq.\,(\ref{eq:case2-=}).
If $\rho(G_i)\ge\sqrt{m-\frac{n}{2}-\frac{1}{2}}$ for all $i\in\{1,\ldots,q\}$, then by Eqs.\,(\ref{eq:case2-=}) and (\ref{eq:case2-hong}) we have
\begin{align}\label{eq:case2-sum-hong}
    q\lambda_2^2(G)=\sum_{i=1}^q{\rho^2(G_i)}\le \sum_{i=1}^q\big(2m_i-n_i+1\big).
\end{align}
This follows that
$$q\left(m-\frac{n}{2}-\frac{1}{2}\right)\le\sum_{i=1}^q\big(2m_i-n_i+1\big).$$
Combining with Claim \ref{clm:sum}, we have $q\left(m-\frac{n}{2}-\frac{1}{2}\right)=\sum_{i=1}^q\big(2m_i-n_i+1\big)$, and then $p=1$, $q=2$ and $n(H_1)=1$.

So $G\in\mathcal{J}(G_1,G_2)$ with $\rho(G_1)=\rho(G_2)$.
Moreover, the equality in Eq.\,(\ref{eq:case2-sum-hong}) gives that $\rho(G_i)=\sqrt{2m_i-n_i+1}$, $i=1,2$, which indicates that $G_i$ is a star or a complete graph from Lemma \ref{lem:Hong's upper bound-1}.
Since $\rho(K_x)=x-1$ and $\rho(S_{x})=\sqrt{x-1}$, we have $G_1$ and $G_2$ satisfy one of conditions: (i) $G_1=G_2=K_{\frac{n-1}{2}}$, (ii) $G_1=G_2=S_{\frac{n-1}{2}}$, or (iii) $G_1=K_{r+1}$ and $G_2=S_{r^2+1}$ with $r^2+r+3=n$.

Furthermore, if $H\in \mathcal{J}(K_{\frac{n-1}{2}},K_{\frac{n-1}{2}})\cup\mathcal{J}(S_{\frac{n-1}{2}},S_{\frac{n-1}{2}})\cup\mathcal{J}(K_{r+1},S_{r^2+1})$ with $r^2+r+3=n$, then, by Cauchy Interlacing theorem, we get
\begin{align*}
    \lambda_2(H)=\begin{cases}
        &\ \ \ \frac{n-1}{2}-1,\ \ \mathrm{if}\ H\in \mathcal{J}(K_{\frac{n-1}{2}},K_{\frac{n-1}{2}});\\
        &\sqrt{\frac{n-1}{2}-1},\ \ \mathrm{if}\ H\in \mathcal{J}(S_{\frac{n-1}{2}},S_{\frac{n-1}{2}});\\
        &\hspace{15mm} r\hspace{0.2mm},\ \ \mathrm{if}\ H\in\mathcal{J}(K_{r+1},S_{r^2+1}).
    \end{cases}
\end{align*}
So we can check $\lambda_2(H)=\sqrt{m(H)-\frac{n(H)}{2}-\frac{1}{2}}$.
This completes the proof.
\end{proof}

\section{Concluding remarks}

It is well known that $\lambda_1(G)\le \Delta_1(G)$ (see Gerschgorin’s Theorem or \cite{BH12}).
Currently, the best estimate for $\Delta_1(G)$ and $\lambda_1(G)$ of irregular graphs is at least $\frac{1}{n(D+1)}$, obtained by Cioab\u{a}, Gregory and Nikiforov \cite{CGN07}.
In this paper, we determine $\lambda_2(G)$ is less than $\Delta_2(G)$, and the gap of them is at least $\frac{1}{n^2}$.
We are interested in the optimal difference of $\Delta_2(G)$ and $\lambda_2(G)$.

Theorem \ref{thm:m.n-1} gives that $\lambda_2(G)\le \sqrt{m-\frac{n}{2}-\frac{1}{2}}$ except $G$ is a special tree. 
Or roughly, it shows $$\lambda_2(G)\le\sqrt{m-\frac{n}{2}}$$ for any connected graph $G$ of order $n$ and size $m$.
And it is proved
$$\lambda_1(G)\le \sqrt{2m-n+1}$$
from Hong's theorem \cite{H88}.
So this motivates us to guess
\begin{align}\label{eq:conjecture}
    \lambda_k(G)\le\sqrt{\frac{2m-n}{k}+c_k},
\end{align}
for a connected graph $G$ of order $n$ and size $m$ with the $k$-th largest eigenvalue $\lambda_k(G)$, where $c_k$ is a constant about $k$.

For a tree $T$, it is proved by Shao \cite{S91} and Chen \cite{C07} that 
$$\lambda_k(T)\le \sqrt{\frac{n}{k}-c_k}$$
with $c_k\in (-2,-1]$.
Thus, the bound in Eq.\,(\ref{eq:conjecture}) is validate for trees.

Regarding the $k$-th largest eigenvalue of a graph, among outerplanar graphs $G$ of order $n$, a recent nice work by Brooks, Gu, Hyatt, Linz and Lu \cite{BGHLL25} shows
$$\lambda_k(G)\le\sqrt{\frac{n}{k}}+1+O\left(\frac{1}{\sqrt{n}}\right).$$
This also confirms the bound in Eq.\,(\ref{eq:conjecture}) for outerplanar graphs when $m\ge \big(1+o(1)\big)n$.

\bibliographystyle{plain}

\begin{thebibliography}{99}
\vskip 0.2in

\bibitem{A86}
N. Alon, Eigenvalues and Expanders, \textit{Combinatorica} \textbf{6} (1986) 83--96.

\bibitem{AM85}
N. Alon, V. D. Milman, $\lambda_1$, isoperimetric inequalities for graphs and superconcentrators, 
\textit{J. Combin. Theory Ser. B} \textbf{38} (1985) 73--88.

\bibitem{AS00}
N. Alon, B. Sudakov, Bipartite subgraphs and the smallest eigenvalue, \textit{Combin. Probab. Comput.} \textbf{9}(1) (2000) 1--12.

\bibitem{AH10}
M. Aouchiche, P. Hansen, A survey of automated conjectures in spectral graph theory, \textit{Linear Algebra Appl.} \textbf{432} (2010) 2293--2322.

\bibitem{B25}
I. Balla, Equiangular lines via matrix projection, \textit{Adv. Math.} \textbf{482} (2025) 110620.

\bibitem{B65}
H. E. Bell, Gerschgorin's Theorem and the Zeros of Polynomials, \textit{Amer. Math. Monthly} \textbf{72} (1965) 292--295.

\bibitem{BH12}
A. E. Brouwer, W. H. Haemers, \textit{Spectra of Graphs}, Springer, 2012. 

\bibitem{BH85}
R. A. Brualdi, A. J. Hoffman, On the spectral radius of $(0, 1)$ matrices, \textit{Linear Algebra Appl.} \textbf{65} (1985) 133--146.

\bibitem{BD84}
R. C. Brigham, R. D. Dutton, Bounds on Graph Spectra, \textit{J. Combin. Theory Ser. B} \textbf{37} (1984) 228--234.

\bibitem{BGHLL25}
G. Brooks, M. Gu, J. Hyatt, W. Linz, L. Lu, On the maximum second eigenvalue of outerplanar graphs, \textit{Discrete Math.} \textbf{348} (2025) 114416.

\bibitem{C07}
J. Chen,  The proof on the conjecture of extremal graphs for the $k$th eigenvalues of trees, \textit{Linear Algebra Appl.} \textbf{426} (2007) 12--21.

\bibitem{CGN07}
S. M. Cioab\u{a}, D. A. Gregory, V. Nikiforov, Extreme eigenvalues of nonregular graphs, \textit{J. Combin. Theory Ser. B} \textbf{97} (2007) 483--486.

\bibitem{H82}
D. M. Cvetkovi\'c, On graphs whose second largest eigenvalue does not exceed 1, \textit{Publ. Inst. Math. (Belgr.)} \textbf{31} (1982) 15--20.

\bibitem{Cvetkovicrecentresults} D. M. Cvetkovi\'{c}, M. Doob, I. Gutman, A. Torga\u{s}ev,
Recent Results in the Theory of Graph Spectra, \textit{Ann. Discrete Math.} 36, 1988.

\bibitem{F04}
J. Friedman, A proof of Alon's second eigenvalue conjecture, Memoirs of AMS. pp. 720--724 in: \textit{STOC ’03: Proc. 35th annual ACM symp. on Theory of computing}, San Diego, 2003, ACM, New York, 2003.

\bibitem{FKS89}
J. Friedman, J. Kahn, E. Szemeredi, On the second eigenvalue in random regular graphs, \textit{Proc. 21st ACM STOC} (1989) 587--598. 

\bibitem{GWL19}
J. Guo, Z. Wang, X. Li, Sharp upper bounds of the spectral radius of a graph, \textit{Discrete Math.} \textbf{342} (2019) 2559--2563.

\bibitem{H88}
Y. Hong, A bound on the spectral radius of graphs, \textit{Linear Algebra Appl.} \textbf{108} (1988) 135--139.

\bibitem{Hongsystem}
Y. Hong, On the least eigenvalue of a graph, \textit{J. Syst. Sci. Math. Sci.} \textbf{6}(3) (1993) 269--272.

\bibitem{HSF01}
Y. Hong, J.L. Shu, K.F. Fang, A sharp upper bound of the spectral radius of graphs, \textit{J. Comb. Theory, Ser. B} \textbf{81} (2001) 177--183.

\bibitem{H19}
H. Huang, Induced subgraphs of hypercubes and a proof of the sensitivity conjecture, \textit{Ann. of Math.} \textbf{190}(2) (2019) 949--955.

\bibitem{JTYZZ21}
Z. Jiang, J. Tidor, Y. Yao, S. Zhang, Y. Zhao, Equiangular lines with a fixed angle, \textit{Ann. Math.} \textbf{194} (2021) 729--743.

\bibitem{LCS24}
M. Liu, C. Chen, Z. Stani\'c, Connected ($K_4-e$)-free graphs whose second largest eigenvalue does not exceed 1, \textit{Eur. J. Comb.} \textbf{115} (2024) 103775.

\bibitem{LPS88}
A. Lubotzky, R. Phillips, P. Sarnak, Ramanujan Graphs, \textit{Combinatorica} \textbf{8} (1988) 261--277.

\bibitem{N91}
A. Nilli, On the second eigenvalue of a graph, \textit{Discrete Math.} \textbf{91} (1991) 207--210.

\bibitem{NS94}
N. Nisan, M. Szegedy, On the degree of Boolean functions as real polynomials, \textit{Comput. Complexity} \textbf{4} (1994) 301--313.

\bibitem{S91}
J. Shao, Bounds on the $k$th eigevalues of trees, \textit{Linear Algebra Appl.} \textbf{149} (1991) 19--34.

\bibitem{S95}
J. Shao, On the largest $k$th eigenvalues of trees, \textit{Linear Algebra Appl.} \textbf{221} (1995) 131--157.

\bibitem{S87}
R. P. Stanley, A bound on the spectral radius of graphs with $e$ edges, \textit{Linear Algebra Appl.} \textbf{87} (1987) 267--269.

\bibitem{S04}
D. Stevanovic, The largest eigenvalue of nonregular graphs, \textit{J. Combin. Theory Ser. B} \textbf{91}(1) (2004) 143--146.

\bibitem{Z05}
X.-D. Zhang, Eigenvectors and eigenvalues of non-regular graphs, \textit{Linear Algebra Appl.} \textbf{409} (2005) 79--86.


\bibitem{S74}
A. J. Schwenk, Computing the Characteristic polynomial of a graph. In: Bari, R.A., Harary, F. (eds) \textit{Graphs and Combinatorics}. Lecture Notes in Mathematics, vol 406. Springer, Berlin, Heidelberg, 1974.

\bibitem{Sun+Das}
S. W. Sun, K. C. Das, A conjecture on spectral radius of graphs, \textit{Linear Algebra Appl.} \textbf{588} (2020) 74--80.

\bibitem{WG26+}
Z. Wang, J. Guo, Extremal results on the second largest eigenvalue of graphs with given order, to appear.

\end{thebibliography}


\newpage

\appendix

\setcounter{equation}{0}
\renewcommand{\theequation}{A.\arabic{equation}}

\section{Appendix: Proof of Lemma \ref{lem:four special graphs-2}}\label{appendix A}

The graph $G$ is of the form $H_1uvH_2$ with $H_1\in \mathcal{H}_{n_1}$ and $H_2\in\mathcal{H}_{n_2}$.
It suffices to prove that $S_{\frac{n}{2}}uvS_{\frac{n}{2}}$ is the unique graph $G$ satisfying $\lambda_2(G)\ge \sqrt{m(G)-\frac{n(G)}{2}-\frac{1}{2}}$.
From Lemma \ref{lem:connected by negative edges}, we get $\lambda_2(G)<\min\{\rho(H_1),\rho(H_2)\}$.
We thus establish the following relation, which will be used frequently in the sequel,
\begin{align}\label{eq:appendix-lower}
    \rho(H_1)>\sqrt{m(G)-\frac{n(G)}{2}-\frac{1}{2}}\ \  \textrm{and}\ \  \rho(H_2)>\sqrt{m(G)-\frac{n(G)}{2}-\frac{1}{2}}.
\end{align}

For understanding, let $u_i$ (or $v_i$, resp.) be a vertex of $H_1\in \mathcal{H}_{n_1}\backslash\{S^1_{n_1-1}\}$ (or $H_2\in \mathcal{H}_{n_2}\backslash\{S^1_{n_2-1}\}$, resp.) with the $i$-th smallest degree.
For example, if $H_1=S_{\frac{n_1}{2}}$, then $u_1$ is a pendent vertex of $S_{\frac{n_1}{2}}$, and $u_2$ is the center of $S_{\frac{n_1}{2}}$.
If $H_1=S^1_{\frac{n_1}{2}-1}$, then label as $u_2$ the vertex of degree $2$,  as $u_1$ the pendent vertex adjacent to $u_2$, as $u_3$ a pendant vertex except $u_1$, and as $u_4$ the vertex of degree $n_1-2$.
Label as $v_i$ a vertex in $H_2$ by a similar way if $H_2=S^1_{\frac{n_2}{2}-1}$. 
See two illustrations in Figure 3.

\vskip0.382in

\setlength{\unitlength}{0.5pt}
\begin{center}
\begin{picture}(761,160)
\put(0,160){\circle*{6}}
\put(40,80){\circle*{6}}
\qbezier(0,160)(20,120)(40,80)
\put(80,160){\circle*{6}}
\qbezier(40,80)(60,120)(80,160)
\put(120,80){\circle*{6}}
\qbezier(40,80)(80,80)(120,80)
\put(199,80){\circle*{6}}
\put(279,80){\circle*{6}}
\qbezier(199,80)(239,80)(279,80)
\put(240,160){\circle*{6}}
\qbezier(240,160)(259,120)(279,80)
\put(320,160){\circle*{6}}
\qbezier(279,80)(299,120)(320,160)
\linethickness{1.5pt}
\qbezier(120,80)(159,80)(199,80)
\put(29,160){\circle*{4}}
\put(40,160){\circle*{4}}
\put(51,160){\circle*{4}}
\put(269,160){\circle*{4}}
\put(280,160){\circle*{4}}
\put(290,160){\circle*{4}}
\put(440,160){\circle*{6}}
\put(480,80){\circle*{6}}
\thinlines
\qbezier(440,160)(460,120)(480,80)
\put(520,160){\circle*{6}}
\qbezier(520,160)(500,120)(480,80)
\put(560,80){\circle*{6}}
\qbezier(480,80)(520,80)(560,80)
\put(640,80){\circle*{6}}
\put(720,80){\circle*{6}}
\qbezier(640,80)(680,80)(720,80)
\put(469,160){\circle*{4}}
\put(480,160){\circle*{4}}
\put(491,160){\circle*{4}}
\qbezier(681,120)(681,103)(692,91)\qbezier(692,91)(704,80)(721,80)\qbezier(721,80)(737,80)(749,91)\qbezier(749,91)(761,103)(761,120)\qbezier(681,120)(681,136)(692,148)\qbezier(692,148)(704,160)(721,160)\qbezier(721,160)(737,160)(749,148)\qbezier(749,148)(761,136)(761,120)
\put(109,63){$u_1$}
\put(30,63){$u_2$}
\put(189,63){$v_1$}
\put(269,60){$v_2$}
\put(550,63){$u_1$}
\put(470,63){$u_2$}
\put(631,63){$v_1$}
\put(761,150){$v_2$}
\put(711,63){$v_3$}
\put(755,140){\circle*{6}}
\linethickness{1.5pt}
\qbezier(560,80)(600,80)(640,80)
\thinlines
\put(121,30){$S_{\frac{n}{2}}u_1v_1S_{\frac{n}{2}}$}
\put(551,31){$S_{n_1}u_1v_1K^1_{n_2-1}$}
\put(698,111){$K_{n_2-1}$}
\put(100,-20){Figure 3: Two illustrations $S_{\frac{n}{2}}u_1v_1S_{\frac{n}{2}}$ and $S_{n_1}u_1v_1K^1_{n_2-1}$}
\end{picture}
\end{center}
\vskip0.382in

Up to isomorphic, omitting the value of $n_1$ and $n_2$, there are $^\#G=55$ kinds of graphs $G$ of the form $H_1uvH_2$ (see Table \ref{tab1}).
We distinguish into 6 types.

\vskip0.2in

\begin{table}[h!]
\centering
\caption{The number of graphs $H_1u_iv_jH_2$}\label{tab1}
\begin{tabular}{|p{2.5cm}|p{2.5cm}|p{2.5cm}|p{2.5cm}|p{2.5cm}|}
\hline
\makecell[c]{$^\#H_1u_iv_jH_2$} & \makecell[c]{$S_{n_2}$\\ \small{$v_1\ \ v_2$}} &  \makecell[c]{$S^1_{n_2-1}$\\ \small{$v_1\ \ v_2\ \ v_3\ \ v_4$}} & \makecell[c]{$K_{n_2}$\\ \small{$v_1$}} & \makecell[c]{$K^1_{n_2-1}$\\ \small{$v_1\ \ v_2\ \ v_3$}}\\
\hline
\makecell[c]{$S_{n_1}$\\ \small{$u_1\ \ u_2$}} & \makecell[c]{3} & \makecell[c]{8} & \makecell[c]{2} & \makecell[c]{6}\\
\hline
\makecell[c]{$S^1_{n_1-1}$\\ \small{$u_1\ \ u_2\ \ u_3\ \ u_4$}} & \makecell[c]{--} & \makecell[c]{10} & \makecell[c]{4} & \makecell[c]{12}\\
\hline
\makecell[c]{$K_{n_1}$\\ \small{$u_1$}} & \makecell[c]{--} & \makecell[c]{--} & \makecell[c]{1} & \makecell[c]{3}\\
\hline
\makecell[c]{$K^1_{n_1-1}$\\ \small{$u_1\ \ u_2\ \ u_3$}} & \makecell[c]{--} & \makecell[c]{--} & \makecell[c]{--} & \makecell[c]{6}\\
\hline
\end{tabular}
\end{table}

\vskip0.2in

By Computing, we can get the spectral radius of graphs in $\mathcal{H}_n$.
Follow the label of graph in $\mathcal{H}_{n_1}$.
For the graph $S^1_{n-1}$, let $\textit{\textbf{y}}$ be an eigenvector of $S^1_{n-1}$ corresponding to $\rho(S^1_{n-1})$.
Clearly, $\textit{\textbf{y}}_{u_4}=\max\{\textit{\textbf{y}}_{u_1},\textit{\textbf{y}}_{u_2},\textit{\textbf{y}}_{u_3},\textit{\textbf{y}}_{u_4}\}$. 
Also $\rho(S^1_{n-1})\textit{\textbf{y}}_{u_1}=\textit{\textbf{y}}_{u_2}<\textit{\textbf{y}}_{u_4}$, that is, $\textit{\textbf{y}}_{u_1}<\frac{1}{\rho(S^1_{n-1})}\textit{\textbf{y}}_{u_4}$.
Thus,
$$\rho^2(S^1_{n-1})\textit{\textbf{y}}_{u_4}=(n-2)\textit{\textbf{y}}_{u_4}+\textit{\textbf{y}}_{u_1}<\Big(n-2+\frac{1}{\rho(S^1_{n-1})}\Big)\textit{\textbf{y}}_{u_4},$$
which follows that
$$\rho(S^1_{n-1})<\sqrt{n-2+\frac{1}{n-2}},$$
since $\rho(S^1_{n-1})>\sqrt{n-2}$.
For the graph $K^1_{n-1}$, let $\textit{\textbf{z}}$ be an eigenvector of $K^1_{n-1}$ corresponding to $\rho(K^1_{n-1})$.
Clearly, $\textit{\textbf{z}}_{u_3}=\max\{\textit{\textbf{z}}_{u_1},\textit{\textbf{z}}_{u_2},\textit{\textbf{z}}_{u_3}\}$.
Then
$$\rho(K^1_{n-1})\textit{\textbf{z}}_{u_3}=(n-2)\textit{\textbf{z}}_{u_2}+\textit{\textbf{z}}_{u_1}<(n-2)\textit{\textbf{z}}_{u_3}+\frac{1}{\rho(K^1_{n-1})}\textit{\textbf{z}}_{u_3},$$
which follows that
$$\rho(K^1_{n-1})<n-2+\frac{1}{n-2},$$
since $\rho(K^1_{n-1})>n-2$.
We also know that
\begin{center}
    $\rho(S_n)=\sqrt{n-1}$\ \ \ and\ \ \ $\rho(K_n)=n-1$.
\end{center}

From \cite[Theorem \textcolor{blue}{3}]{S74},  the characteristic polynomial of $H_1u_iv_jH_2$ is
$$\phi_{H_1u_iv_jH_2}(x)=\phi_{H_1}(x)\phi_{H_2}(x)-\phi_{H_1-u_i}(x)\phi_{H_2-v_j}(x).$$
It not hard to know $\rho(G)>\sqrt{m-\frac{n}{2}-\frac{1}{2}}$ and $\lambda_3(G)<\sqrt{m-\frac{n}{2}-\frac{1}{2}}$ by Cauchy interlacing theorem.
So $\lambda_2(H_1u_iv_jH_2)>\sqrt{m-\frac{n}{2}-\frac{1}{2}}$ if $\phi_{H_1u_iv_jH_2}(\sqrt{m-\frac{n}{2}-\frac{1}{2}})>0$, and $\lambda_2(H_1u_iv_jH_2)<\sqrt{m-\frac{n}{2}-\frac{1}{2}}$ if $\phi_{H_1u_iv_jH_2}(\sqrt{m-\frac{n}{2}-\frac{1}{2}})<0$.

Now we write $n(G)$ as $n$, and $m(G)$ as $m$ for simplification.
Then $n=n_1+n_2$ and $m=m(H_1)+m(H_2)+1$.

\vskip0.25in

\noindent\textbf{\large Type I}: $H_1\in\{S_{n_1},S^1_{n_1-1}\}$ and $H_2\in\{S_{n_2},S^1_{n_2-1}\}$.

From Eq.\,(\ref{eq:appendix-lower}), we have $\sqrt{n_1-1}\ge\rho(H_1)>\sqrt{m-\frac{n}{2}-\frac{1}{2}}=\sqrt{\frac{n-3}{2}}$, which gives $n_1>\frac{n-1}{2}$.
Similarly, we get $n_2>\frac{n-1}{2}$. So $n_1=n_2=\frac{n}{2}$.

Since $\rho(S^1_{\frac{n}{2}-1})<\sqrt{\frac{n}{2}-2+\frac{1}{\frac{n}{2}-2}}<\sqrt{\frac{n-3}{2}}$, we get $H_1=S_{\frac{n}{2}}$ and $H_2=S_{\frac{n}{2}}$.
Computing the characteristic polynomial of $G$, we have
\begin{align*}
    &\phi_{S_{\frac{n}{2}}u_1v_1S_{\frac{n}{2}}}(x)=x^{n-6}\left[x^2\left(x^2-\big(\frac{n}{2}-1\big)\right)^2-\left(x^2-\big(\frac{n}{2}-2\big)\right)^2\right],\\
    &\phi_{S_{\frac{n}{2}}u_1v_2S_{\frac{n}{2}}}(x)=x^{n-4}\left[\left(x^2-\big(\frac{n}{2}-1\big)\right)^2-\left(x^2-\big(\frac{n}{2}-2\big)\right)\right],\\
    &\phi_{S_{\frac{n}{2}}u_2v_2S_{\frac{n}{2}}}(x)=x^{n-4}\left[\left(x^2-\big(\frac{n}{2}-1\big)\right)^2-x^2\right].
\end{align*}
Then $\phi_{S_{\frac{n}{2}}u_1v_1S_{\frac{n}{2}}}(\sqrt{\frac{n-3}{2}})=\left(\frac{n-3}{2}\right)^{\frac{n-6}{2}}\cdot\frac{n-5}{8}>0$, and $\phi_{S_{\frac{n}{2}}u_1v_2S_{\frac{n}{2}}}(\sqrt{\frac{n-3}{2}})=\left(\frac{n-3}{2}\right)^{\frac{n-4}{2}}\cdot(-\frac{1}{4})<0$ and $\phi_{S_{\frac{n}{2}}u_1v_1S_{\frac{n}{2}}}(\sqrt{\frac{n-3}{2}})=\left(\frac{n-3}{2}\right)^{\frac{n-4}{2}}\cdot\frac{7-2n}{4}<0$, which indicates that
\begin{center}
    $\lambda_2(S_{\frac{n}{2}}u_1v_1S_{\frac{n}{2}})>\sqrt{\frac{n-3}{2}}$,\ \ $\lambda_2(S_{\frac{n}{2}}u_1v_2S_{\frac{n}{2}})<\sqrt{\frac{n-3}{2}}$\ \ and\ \ $\lambda_2(S_{\frac{n}{2}}u_2v_2S_{\frac{n}{2}})<\sqrt{\frac{n-3}{2}}$.
\end{center}
Thus, we obtain $G$ is $S_{\frac{n}{2}}u_1v_1S_{\frac{n}{2}}$.

\vskip0.2in

\noindent\textbf{\large Type II}: $H_1\in\{S_{n_1},S^1_{n_1-1}\}$ and $H_2=K_{n_2}$.

We have $\rho(H_1)\le\sqrt{n_1-1}$, $\rho(H_2)=n_2-1$, and
$m-\frac{n}{2}-\frac{1}{2}=n_1-1+\frac{n_2(n_2-1)}{2}+1-\frac{n_1+n_2}{2}-\frac{1}{2}=\frac{n_2^2+n_1-2n_2-1}{2}$.
From Eq.\,(\ref{eq:appendix-lower}), 
$\sqrt{n_1-1}>\sqrt{\frac{n_2^2+n_1-2n_2-1}{2}}$, which gives
$n_1>(n_2-1)^2$; and $n_2-1>\sqrt{\frac{n_2^2+n_1-2n_2-1}{2}}$, which gives $n_1<(n_2-1)^2$. This is a contradiction.
So there exists no graph for this type.

\vskip0.2in

\noindent\textbf{\large Type III}: $H_1\in\{S_{n_1},S^1_{n_1-1}\}$ and $H_2=K^1_{n_2-1}$.

Comparing the characteristic polynomials of $H_1u_iv_1H_2$ and $H_1u_iv_jH_2$ for $j=2,3$, we have
\begin{align*}
    &\phi_{H_1u_iv_1H_2}(x)-\phi_{H_1u_iv_2H_2}(x)=\phi_{H_1-u_i}(x)\cdot \left(n_2-3\right)(x+2)(x+1)^{n_2-4},\\
    &\phi_{H_1u_iv_1H_2}(x)-\phi_{H_1u_iv_3H_2}(x)=\phi_{H_1-u_i}(x)\cdot \left(n_2-2\right)(x+1)^{n_2-3}.
\end{align*}
This gives that $\phi_{H_1u_iv_1H_2}(x)>\max\{\phi_{H_1u_iv_2H_2}(x), \phi_{H_1u_iv_3H_2}(x)\}$ when $x>\rho(H_1-u_i)$.
Since $\lambda_2(H_1u_iv_jH_2)\ge \rho(H_1-u_i)$,
we get 
$$\lambda_2(H_1u_iv_1H_2)>\max\{\lambda_2(H_1u_iv_2H_2), \lambda_2(H_1u_iv_3H_2)\}.$$

If $H_1=S_{n_1}$, then 
$$\phi_{S_{n_1}u_1v_1K^1_{n_2-1}}(x)-\phi_{S_{n_1}u_2v_1K^1_{n_2-1}}(x)=(n_1-2)x^{n_1-3}\phi_{K_{n_2-1}}(x)>0,$$
implying that 
$$\lambda_2(S_{n_1}u_1v_1K^1_{n_2-1})>\lambda_2(S_{n_1}u_2v_1K^1_{n_2-1}).$$
Similarly, if $H_1=S^1_{n_1-1}$, then we can show 
$$\lambda_2(S^1_{n_1-1}u_1v_1K^1_{n_2-1})>\max\{\lambda_2(S^1_{n_1-1}u_2v_1K^1_{n_2-1}),\lambda_2(S^1_{n_1-1}u_3v_1K^1_{n_2-1})\}.$$

Now we compare the characteristic polynomials of $S_{n_1}u_1v_1K^1_{n_2-1}$ and $S^1_{n_1-1}u_1v_1K^1_{n_2-1}$.
Then 
$$\phi_{S_{n_1}u_1v_1K^1_{n_2-1}}(x)-\phi_{S^1_{n_1-1}u_1v_1K^1_{n_2-1}}(x)=-(n_1-3)x^{n_1-4}\phi_{K^1_{n_2-1}}(x)>0$$
when $x\in\left(\lambda_2(K^1_{n_2-1}),\rho(K^1_{n_2-1})\right)$.
Since $\lambda_2(S_{n_1}u_1v_1K^1_{n_2-1})\in \left(\lambda_2(K^1_{n_2-1}),\rho(K^1_{n_2-1})\right)$ and $\lambda_2(S^1_{n_1-1}u_1v_1K^1_{n_2-1})\in \left(\lambda_2(K^1_{n_2-1}),\rho(K^1_{n_2-1})\right)$, we get that
$$\lambda_2(S_{n_1}u_1v_1K^1_{n_2-1})>\lambda_2(S^1_{n_1-1}u_1v_1K^1_{n_2-1}).$$

Next, assuming that $\lambda_2(S_{n_1}u_1v_1K^1_{n_2-1})\ge\sqrt{m-\frac{n}{2}-\frac{1}{2}}$, we deduce a contradiction.
We need to estimate the spectral radius of $K^1_{n-1}$ more exact.
Note $\rho(K^1_{n-1})\textit{\textbf{z}}_{v_2}=\textit{\textbf{z}}_{v_1}+(n-3)\textit{\textbf{z}}_{v_2}$ and $\rho(K^1_{n-1})\textit{\textbf{z}}_{v_1}=\textit{\textbf{z}}_{v_3}$.
Then 
$$\rho(K^1_{n-1})\textit{\textbf{z}}_{v_3}=(n-2)\textit{\textbf{z}}_{v_2}+\textit{\textbf{z}}_{v_1}=\frac{n-2}{\rho(K^1_{n-1})-(n-3)}\textit{\textbf{z}}_{v_3}+\frac{1}{\rho(K^1_{n-1})}\textit{\textbf{z}}_{v_3}.$$
So $\rho(K^1_{n-1})$ is the largest root of $x^3 - (n-3)x^2 - (n-1)x + (n-3) = 0$.
It is checked that $\rho(K^1_{n-1})<n-2+\frac{1}{(n-2)^2}$.

We have $\rho(S_{n_1})=n_1-1$, $\rho(K^1_{n_2-1})<n_2-2+\frac{1}{(n_2-2)^2}$, 
and $m-\frac{n}{2}-\frac{1}{2}=(n_1-1)+1+\frac{(n_2-1)(n_2-2)}{2}+1-\frac{n_1+n_2}{2}-\frac{1}{2}=\frac{(n_1-1)+(n_2-2)^2}{2}$.
From Eq.\,(\ref{eq:appendix-lower}), we get
$n_1-1>\sqrt{\frac{(n_1-1)+(n_2-2)^2}{2}}$ and $n_2-2+\frac{1}{(n_2-2)^2}>\sqrt{\frac{(n_1-1)+(n_2-2)^2}{2}}$, which follows that, respectively,
\begin{align*}
    n_1-1>(n_2-2)^2,
\end{align*}
and
$$n_1-1<(n_2-2)^2+\frac{4}{n_2-2}+\frac{2}{(n_2-2)^4}.$$
Since $n_1$ and $n_2$ are integers, and $n_2\ge 10$, we get
$n_1-1\ge(n_2-2)^2+1$ and $n_1-1\le(n_2-2)^2$, a contradiction.

Thus, any graph $G$ of this type satisfies $\lambda_2(G)<\sqrt{m-\frac{n}{2}-\frac{1}{2}}$.
There exists no graph.

\vskip0.2in

\noindent\textbf{\large Type IV}: $H_1=K^1_{n_1-1}$ and $H_2=K^1_{n_2-1}$.

We have $\rho(H_1)<n_1-2+\frac{1}{n_1-2}$, $\rho(H_2)<n_2-2+\frac{1}{n_2-2}$, 
and $m-\frac{n}{2}-\frac{1}{2}=\frac{(n_1-1)(n_1-2)}{2}+1+\frac{(n_2-1)(n_2-2)}{2}+1+1-\frac{n_1+n_2}{2}-\frac{1}{2}=\frac{(n_1-1)^2+(n_2-1)^2+1}{2}\ge \frac{(n-4)^2+2}{4}$.
From Eq.\,(\ref{eq:appendix-lower}), we get
$n_1-2+\frac{1}{n_1-2}>\sqrt{\frac{(n-4)^2+2}{4}}$ and $n_2-2+\frac{1}{n_2-2}>\sqrt{\frac{(n-4)^2+2}{4}}$, which follows that
\begin{align*}
    n_1&>\sqrt{\frac{(n-4)^2-6}{4}-\frac{1}{(n_1-2)^2}}+2\\
       &>\sqrt{\frac{(n-4)^2-7}{4}}+2\\
       &>\frac{n-4}{2}-\frac{2}{n-4}+2\\
       &>\frac{n-1}{2},
\end{align*}
and $n_2>\frac{n-1}{2}$ similarly.
So $n_1=n_2=\frac{n}{2}$.

Comparing the characteristic polynomials of $H_1u_iv_1H_2$ and $H_1u_iv_jH_2$ for $j=2,3$, we have
\begin{align*}
    &\phi_{H_1u_iv_1H_2}(x)-\phi_{H_1u_iv_2H_2}(x)=\phi_{H_1-u_i}(x)\cdot \left(\frac{n}{2}-3\right)(x+2)(x+1)^{\frac{n}{2}-4},\\
    &\phi_{H_1u_iv_1H_2}(x)-\phi_{H_1u_iv_3H_2}(x)=\phi_{H_1-u_i}(x)\cdot \left(\frac{n}{2}-2\right)(x+1)^{\frac{n}{2}-3}.
\end{align*}
This gives that $\phi_{H_1u_iv_1H_2}(x)>\max\{\phi_{H_1u_iv_2H_2}(x), \phi_{H_1u_iv_3H_2}(x)\}$ when $x>\sqrt{\frac{(n-4)^2+2}{4}}$.
So 
$$\lambda_2(H_1u_iv_1H_2)>\max\{\lambda_2(H_1u_iv_2H_2), \lambda_2(H_1u_iv_3H_2)\}.$$
Similarly, by considering the index $i$, we can get that
$$\lambda_2(H_1u_1v_1H_2)>\max\{\lambda_2(H_1u_iv_jH_2): i\not=1\ \mathrm{or}\ j\not=1\}.$$

The characteristic polynomial of $K^1_{\frac{n}{2}-1}u_1v_1K^1_{\frac{n}{2}-1}$ satisfies
\begin{align*}
    \frac{\phi_{K^1_{\frac{n}{2}-1}u_1v_1K^1_{\frac{n}{2}-1}}(x)}{(x+1)^{n-6}}&=\left[ x^3 - \left(\frac{n}{2}-2\right)x^2 - 2x + (n-5) \right]\cdot \left[ x^3 - \left(\frac{n}{2}-4\right)x^2 - (n-4)x - 1 \right]\\
    &\triangleq f_1(x)\cdot f_2(x).
\end{align*}
It is checked that $f_1(x)$ and $f_2(x)$ are strictly increasing on $x>\sqrt{\frac{(n-2)^2+2}{4}}$, and 
$$f_1(x)\cdot f_2(x)\Big|_{x=\sqrt{\frac{a^2+2}{4}}}=\frac{a^2-4a+2}{4\sqrt{a^2+2}+a}\left(\frac{a^2-6}{4\sqrt{a^2+2}+a}-1\right)>0,$$
where $a=n-4$.
This infers that $\lambda_2(K^1_{\frac{n}{2}-1}u_1v_1K^1_{\frac{n}{2}-1})<\rho(K^1_{\frac{n}{2}-1}u_1v_1K^1_{\frac{n}{2}-1})<\sqrt{\frac{a^2+2}{4}}$, a contradiction.
So there exists no graph for this type.

\vskip0.2in

\noindent\textbf{\large Type V}: $H_1=K_{n_1}$ and $H_2=K^1_{n_2-1}$.

We have $\rho(H_1)=n_1-1$, $\rho(H_2)<n_2-2+\frac{1}{n_2-2}$, 
and $m-\frac{n}{2}-\frac{1}{2}=\frac{n_1(n_1-1)}{2}+\frac{(n_2-1)(n_2-2)}{2}+1+1-\frac{n_1+n_2}{2}-\frac{1}{2}=\frac{(n_1-1)^2+(n_2-2)^2-2}{2}\ge \frac{(n-3)^2}{4}$.
From Eq.\,(\ref{eq:appendix-lower}), we get
$n_1-1>\frac{n-3}{2}$ and $n_2-2+\frac{1}{n_2-2}>\frac{n-3}{2}$, which follows that
\begin{align*}
 n_1>\frac{n-1}{2}\ \ \mathrm{and}\ \ n_2>\frac{n+1}{2}-\frac{2}{n_2-2}.
\end{align*}
Since $n_1$ is an integer, we have $n_1\ge\frac{n}{2}$, and then $n=n_1+n_2>\frac{n}{2}+\frac{n+1}{2}-\frac{2}{n_2-2}=n+\frac{1}{2}-\frac{2}{n_2-2}$.
Then $n_2<6$. This is a contradiction.
So there exists no graph for this type.

\vskip0.2in

\noindent\textbf{\large Type VI}: $H_1=K_{n_1}$ and $H_2=K_{n_2}$.

We have $\rho(H_1)=n_1-1$, $\rho(H_2)=n_2-1$, 
and $m-\frac{n}{2}-\frac{1}{2}=\frac{n_1(n_1-1)}{2}+\frac{n_2(n_2-1)}{2}+1-\frac{n_1+n_2}{2}-\frac{1}{2}=\frac{(n_1-1)^2+(n_2-2)^2-1}{2}\ge \frac{(n-2)^2}{4}-\frac{1}{2}>\left(\frac{n-2}{2}-\frac{1}{n-2}\right)^2$.
From Eq.\,(\ref{eq:appendix-lower}), we get
$n_1-1>\frac{n-2}{2}-\frac{1}{n-2}$ and $n_2-1>\frac{n-2}{2}-\frac{1}{n-2}$, which follows that
\begin{align*}
 n_1>\frac{n}{2}-\frac{1}{n-2}\ \ \mathrm{and}\ \ n_2>\frac{n}{2}-\frac{1}{n-2}.
\end{align*}
So $n_1=n_2=\frac{n}{2}$. 
The characteristic polynomial of $K_{\frac{n}{2}}u_1v_1K_{\frac{n}{2}}$ satisfies
$$
\frac{\phi_{K_{\frac{n}{2}}u_1v_1K_{\frac{n}{2}}}(x)}{(x+1)^{n-4}} =  \left( x^2 - \left(\frac{n}{2}-1\right)x - 1 \right)\cdot \left( x^2 - \left(\frac{n}{2}-3\right)x - (n-3) \right)\triangleq f_1(x)\cdot f_2(x).
$$
It is checked that
$$\frac{\phi_{K_{\frac{n}{2}}u_1v_1K_{\frac{n}{2}}}(x)}{(x+1)^{n-4}}\Big|_{x=\sqrt{\frac{(n-2)^2}{4}-\frac{1}{2}}}<0$$
since $f_1(\sqrt{\frac{(n-2)^2}{4}-\frac{1}{2}})<0$ and $f_2(\sqrt{\frac{(n-2)^2}{4}-\frac{1}{2}})>0$.
So $\lambda_2(K_{\frac{n}{2}}u_1v_1K_{\frac{n}{2}})<\sqrt{\frac{(n-2)^2}{4}-\frac{1}{2}}$.
This is a contradiction.
So there exists no graph for this type.

\end{document}